\def\Y{{\mathcal Y}}
\def\S{{\mathcal S}}
\def\A{{\mathcal A}}
\def\B{{\mathcal B}}
\def\E{{\mathbb E}}
\renewcommand{\phi}{\varphi}
\renewcommand{\P}{\mathbb{P}}
\newcommand{\cC}{\mathcal{C}}
\newcommand{\Hs}{\mathcal{H}}
\let\@@pmod\pmod
\DeclareRobustCommand{\pmod}{\@ifstar\@pmods\@@pmod}
\def\@pmods#1{\mkern4mu({\operator@font mod}\mkern 6mu#1)}
\def\Z{{\mathbb Z}}
\def\N{{\mathbb N}}
\def\Q{{\mathbb Q}}
\def\SS{{\mathcal S}}
\def\Var{{\operatorname{Var}}}
\def\Ap{{\operatorname{Ap}}}
\def\KV{{\operatorname{KV}}}
\newtheorem{question}{Question}
\newtheorem{thm}{Theorem}
\newtheorem{cor}[thm]{Corollary}
\newtheorem{lemma}[thm]{Lemma}
\newtheorem*{defn}{Definition}
\newtheorem{proposition}[thm]{Proposition}
\newtheorem{corollary}[thm]{Corollary}
\begin{document}

\title[The Expected Embedding Dimension of a Numerical Semigroup]{The Expected Embedding Dimension, type and weight of a Numerical Semigroup}

\author{Nathan Kaplan}
\address{Department of Mathematics, University of California, Irvine,
340 Rowland Hall, Irvine, CA 92697}
\email{nckaplan@math.uci.edu}

\author{Deepesh Singhal}
\address{Department of Mathematics, University of California, Irvine, 340 Rowland Hall, Irvine, CA 92697}
\email{dsinghal@uci.edu}

\date{November 14th, 2022}

\keywords{Numerical semigroup, genus of a numerical semigroup, embedding dimension of a numerical semigroup, type of a numerical semigroup, weight of a numerical semigroup}

\subjclass[2020]{20M14, 05A16}

\begin{abstract}
    We study statistical properties of numerical semigroups of genus $g$ as $g$ goes to infinity.  More specifically, we answer a question of Eliahou by showing that as $g$ goes to infinity, the proportion of numerical semigroups of genus $g$ with embedding dimension close to $g/\sqrt{5}$ approaches $1$.  We prove similar results for the type and weight of a numerical semigroup of genus $g$.
\end{abstract}

\maketitle

\section{Introduction}

Let $\N_0$ denote the monoid of nonnegative integers. A \emph{numerical semigroup} $S$ is a submonoid of $\N_0$ for which $|\N_0\setminus S|<\infty$. The \emph{set of gaps} of $S$ is $\Hs(S)=\N_0\setminus S$. The \emph{Frobenius number} of $S$, denoted by $F(S)$, is the largest element of $\Hs(S)$.  The \emph{genus} of $S$, denoted by $g(S)$, is the number of elements of $\Hs(S)$. The \emph{multiplicity} of $S$, denoted $m(S)$, is the smallest nonzero element of $S$. For a general reference on numerical semigroups, see \cite{Gen ref 1}.

There has been extensive recent interest in counting numerical semigroups ordered by genus and in studying invariants of `typical' numerical semigroups of given genus.  Our goal is to prove several statistical results about these numerical semigroups.  Every numerical semigroup has a unique minimal generating set, which we denote by $\A(S)$. This set consists of positive elements of $S$ that are not the sum of two positive elements of $S$.  That is, $\A(S)=(S\setminus\{0\})\setminus((S\setminus\{0\})+(S\setminus\{0\}))$. The size of the minimal generating set is called the \emph{embedding dimension} of $S$, and is denoted by $e(S)=|\A(S)|$. 
The \emph{pseudo-Frobenius} numbers of $S$ are defined as follows:
\[
PF(S)=\{P\in\Hs(S)\colon \text{for every }s\in S\setminus\{0\}\text{ we have }P+s\in S\}.
\]
The number of pseudo-Frobenius numbers is the \emph{type} of $S$, denoted by $t(S)=|PF(S)|$.
The \emph{weight} of $S$ is defined as
\[
w(S)=\bigg(\sum_{x\in\Hs(S)}x\bigg)-\frac{g(g+1)}{2}.
\]
The motivation for studying the weight of a numerical semigroup comes from the theory of Weierstrass semigroups of algebraic curves.  For a reference, see \cite[Chapter 1, Appendix E]{ACGH}.

There are infinitely many numerical semigroups, so in order to prove statistical statements about their invariants we must order them in some way.  Let $\SS_g$ denote the set of numerical semigroups of genus $g$.  It is not difficult to show that $\SS_g$ is finite. There is an extensive literature about how the size of this set varies with $g$. Let $N(g)=|\SS_g|$ be the number of numerical semigroups of genus $g$.  Let $\phi=\frac{1+\sqrt{5}}{2}$ be the golden ratio.
\begin{thm}\cite[Theorem 1]{Zhai}\label{thm_zhai}
There exists a constant $S> 3.78$ such that
\[
\lim_{g\to\infty}\frac{N(g)}{\phi^g} = S.
\]
\end{thm}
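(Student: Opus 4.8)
The plan is to isolate a transparent family of numerical semigroups that (i) contains all but a negligible proportion of the genus-$g$ semigroups and (ii) can be counted essentially exactly, and then to feed the resulting count into a generating-function asymptotic. Write $\SS_g'=\{S\in\SS_g\colon F(S)<3m(S)\}$ (the semigroups ``of depth at most $3$''). The first step, and the first of two main obstacles, is the structural reduction
\[
|\SS_g\setminus\SS_g'| = o(\phi^g),
\]
i.e.\ asymptotically all numerical semigroups of genus $g$ satisfy $F(S)<3m(S)$. The idea is that when $F\ge 3m$ the initial segment $S\cap[0,F]$ is highly constrained: it contains every multiple of $m$ below $F$ together with all of their $S$-translates, and closure under addition propagates membership forward, so a semigroup of large depth is determined by substantially less data than a generic one and can be encoded with total size bounded by $cg$ for a constant $c$ small enough that the number of possibilities is $O(y^g)$ with $y<\phi$. (For the theorem one only needs $o(\phi^g)$, but the stronger bound is convenient downstream.) Making the encoding efficient enough to beat golden-ratio growth is the delicate part.

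Granting the reduction, the second step is to count $\SS_g'$ exactly. A semigroup $S$ with multiplicity $m$ and $F(S)<3m$ is the same data as a pair of sets $A=S\cap[m,2m)$ and $B=S\cap[2m,3m)$ subject only to $m\in A$ and $(A+A)\cap[2m,3m)\subseteq B$ — every integer $\ge 3m$ then lies in $S$ automatically, and all remaining sums of semigroup elements land above $3m$ — and its genus is $3m-1-|A|-|B|$. Setting $\delta(A)=|(A+A)\cap[2m,3m)|$ for the number of forced elements of $B$ and summing over the free choices for the rest of $B$ gives
\[
|\SS_g'| = \sum_{m\ge 1}\ \sum_{\substack{A\subseteq[m,2m)\\ m\in A}} \binom{m-\delta(A)}{\,|A|+g+1-2m\,},
\]
so that, multiplying by $x^g$ and summing on $g$, the generating function $\mathcal D(x)=\sum_{g}|\SS_g'|\,x^g$ equals $\sum_{m\ge 1}x^{2m-1}(1+x)^m\sum_{A}x^{-|A|}(1+x)^{-\delta(A)}$, the inner sum being over the same $A$ as above. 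As a sanity check, the sub-family with $F<2m$ (equivalently $B=[2m,3m)$) contributes exactly the Fibonacci number $F_{g+1}$ to $|\SS_g'|$, with generating function $1/(1-x-x^2)$, whose unique singularity of least modulus is the simple pole at $x=1/\phi$; in particular $|\SS_g'|\ge F_{g+1}\sim\tfrac{\phi}{\sqrt5}\phi^g$.

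The third step is the asymptotic extraction. One shows that $\mathcal D(x)$ — or, if the full depth-$3$ family is unwieldy, a sub-family that still captures the leading-order term, with the complement absorbed into the $o(\phi^g)$ error by a variant of Step 1 — is a rational function with a simple pole at $x=1/\phi$ and all other poles of strictly larger modulus. Heuristically the rate $\phi$ is forced by the balance between the ``cost'' $x^{2m}$ of a block of length $m$ and the roughly $(1+x)^m$ ways of completing $B$, whose tension is resolved exactly at $1-x-x^2=0$; the absence of competing singularities on $|x|=1/\phi$ is precisely what promotes easy $\limsup/\liminf$ bounds to a genuine limit. Partial fractions (a standard transfer theorem) then gives $|\SS_g'|\sim S\phi^g$ with $S=\phi\cdot(-\operatorname{Res}_{x=1/\phi}\mathcal D)$, and combining with Step 1 yields $N(g)=|\SS_g'|+o(\phi^g)\sim S\phi^g$.

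For the explicit bound $S>3.78$: since the exceptional family is negligible, $S$ is the leading constant of $|\SS_g'|$, so it suffices to exhibit explicit subfamilies whose counts already sum to more than $3.78\,\phi^g(1+o(1))$. The depth-$\le 2$ semigroups alone contribute $\tfrac{\phi}{\sqrt5}\phi^g\approx 0.72\,\phi^g$; adding a carefully chosen infinite family of genuinely depth-$3$ semigroups — e.g.\ those in which $A$ realizes a prescribed short ``gap pattern'' near $2m$, so that $\delta(A)$ is small and most of $B$ is free — supplies the rest, or alternatively one bounds $-\operatorname{Res}_{x=1/\phi}\mathcal D$ from below by a rigorous numerical computation. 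I expect the two genuinely hard points to be the efficient encoding underlying Step 1 and the ``no competing singularity'' assertion in Step 3; the remainder is bookkeeping with binomial sums and a residue computation.
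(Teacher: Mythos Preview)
The paper does not prove this theorem; it is quoted from Zhai \cite{Zhai} and used as a black box throughout (see also Theorem~\ref{Zhai summation of c}, another result imported from the same paper). There is therefore no ``paper's own proof'' to compare your proposal against.

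That said, your outline is broadly the strategy of Zhai's actual argument, with one notable divergence. Your Steps 1 and 2 match: Zhai's main contribution is exactly your Step~1 (the semigroups with $F(S)\ge 3m(S)$ are $o(\phi^g)$, which had been Zhao's Conjecture~4.1), and the depth-$\le 3$ parametrization in your Step~2 is Zhao's setup, recorded here as Propositions~\ref{Zhao F<2m} and~\ref{Zhao F<3m}. Where you diverge is Step~3. You propose that $\mathcal D(x)$ is rational with a dominant simple pole at $1/\phi$, but this is not established and is almost certainly false: the inner sum over $A$ involves the sumset quantity $\delta(A)=|(A+A)\cap[2m,3m)|$, which does not collapse to anything rational when summed over all $A$ and all $m$. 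Zhai's route is closer to your parenthetical ``backup plan'': stratify by $k=F(S)-2m(S)$ and by the pattern $A\in A_k$, observe that each fixed $(k,A)$ contributes $c_{k,A}\,\phi^g(1+o(1))$ for an explicit constant $c_{k,A}$, and then prove directly that $\sum_{k}\sum_{A\in A_k} c_{k,A}$ converges (this is Theorem~\ref{Zhai summation of c} here). No singularity analysis or rationality is needed, and the existence of the limit drops out of the absolute convergence together with Step~1.

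So your proposal is a plan rather than a proof, and you have correctly identified the genuine obstacle as the encoding in Step~1. The main correction is that your primary route through rational generating functions should be replaced by the direct summability argument just described; your fallback instinct was the right one.
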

We denote the uniform probability distribution on $\SS_g$ by $\P_g$. If $X$ is a random variable on $\SS_g$, we denote its expectation by $\E_{g}[X]$ and its variance by $\Var_{g}[X]$.

\begin{question}
How are the quantities $m(S), F(S), e(S), t(S)$, and $w(S)$ distributed as we vary through the semigroups in $\SS_g$?
\end{question}

Let $\gamma=\frac{5+\sqrt{5}}{10}=\frac{1}{\sqrt{5}}\phi$.  Kaplan and Ye show that most numerical semigroups $S \in \SS_g$ have multiplicity close to $\gamma g$ and Frobenius number close to twice the multiplicity \cite{KaplanYe}. The main goal of this paper is to prove analogous statements for $e(S), t(S)$, and $w(S)$.

\begin{thm}\cite[Proposition 16 and Theorem 4]{KaplanYe}\label{Kaplan Mult}
For fixed $\epsilon > 0$, we have
\begin{enumerate}
\item
\[
\lim_{g\to\infty}\P_{g}[|m(S)-\gamma g |< \epsilon g]=1,\ \text{and}
\]
\item 
\[
\lim_{g\to\infty}\P_{g}[|F(S)-2m(S)|< \epsilon g]=1.
\]
\end{enumerate}
\end{thm}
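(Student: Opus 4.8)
The plan is to pass to semigroups of small ``depth'' and then run a two‑variable counting argument in the multiplicity $m$ and the ``profile'' of $S$ just above $m$ (throughout, write $[a,b)=\{a,a+1,\dots,b-1\}$). Let $q(S)=\lceil(F(S)+1)/m(S)\rceil$, so $q(S)\le 3$ is equivalent to $F(S)<3m(S)$, which forces $[3m(S),\infty)\subseteq S$. Zhai's proof of Theorem~\ref{thm_zhai} also yields $\#\{S\in\SS_g:q(S)\ge 4\}=o(\phi^g)$, so with Theorem~\ref{thm_zhai} one gets $\#\{S\in\SS_g:q(S)\le 3\}\sim S\phi^g$, and it suffices to prove both claims for the uniform measure on depth $\le 3$ semigroups of genus $g$. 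For such an $S$ of multiplicity $m$, set $A_0=\{a-m:a\in S\cap[m,2m)\}$ and $B_0=\{b-2m:b\in S\cap[2m,3m)\}$, subsets of $[0,m)$ with $0\in A_0$; closure of $S$ is equivalent to the single condition $(A_0+A_0)\cap[0,m)\subseteq B_0$, and counting gaps gives $g=3m-1-|A_0|-|B_0|$. With $C(A_0)=|(A_0+A_0)\cap[0,m)|$ this is a bijective description, so
\[
N_{\le 3}(g,m):=\#\{S\in\SS_g:m(S)=m,\ q(S)\le 3\}=\sum_{A_0}\binom{m-C(A_0)}{\,3m-1-g-|A_0|-C(A_0)\,},
\]
the sum over subsets $A_0\subseteq[0,m)$ with $0\in A_0$.

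Two inputs come essentially for free. The depth $\le 2$ semigroups (those with $|B_0|=m$, i.e.\ $S\supseteq[2m,\infty)$) number exactly $\binom{m-1}{2m-2-g}$ for each $m$, so $N_{\le 3}(g,m)\ge\binom{m-1}{2m-2-g}$; and $\sum_m\binom{m-1}{2m-2-g}$ is the $(g+1)$st Fibonacci number, asymptotic to $\phi^{g+1}/\sqrt5$ and, by Stirling, concentrated at $m=\gamma g+O(\sqrt g)$. This hands us the lower half of part (1) and identifies $\gamma g$ as the target location.

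The core of part (1) is then an upper bound: for each $\epsilon>0$ there is $\delta>0$ with $N_{\le 3}(g,m)\le e^{(\log\phi-\delta)g}$ whenever $|m-\gamma g|\ge\epsilon g$ and $g$ is large. Granting this, summing over the $O(g)$ admissible $m$ gives $\#\{S\in\SS_g:q(S)\le 3,\ |m(S)-\gamma g|\ge\epsilon g\}=o(\phi^g)$, and dividing by $N(g)\sim S\phi^g$ proves part (1). To get the bound I would group $A_0$ in the displayed sum by $(\ell,c)=(|A_0|,C(A_0))$; since $0\in A_0$ one has $A_0\subseteq A_0+A_0$, hence $c\ge\ell$. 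The binomial is large exactly when $c$ is small, i.e.\ when $A_0$ has a small sumset, while the number of $A_0$ realizing a given $(\ell,c)$ is large exactly when $c$ is close to $m$. What is needed is therefore an additive‑combinatorial estimate — sharp at the exponential scale — for the number of subsets of $[0,m)$ of prescribed size whose sumset, truncated to $[0,m)$, has prescribed small size: sets of small doubling are highly structured (essentially generalized arithmetic progressions), and for those the inner binomial sum can be estimated explicitly. Inserting such a bound and optimizing over $(\ell,c)$ should give an exponential rate $g\,\Psi(m/g)$ for $N_{\le 3}(g,m)$, and the delicate point — the step I expect to be the real work — is verifying that $\max\Psi=\log\phi$, attained only at $m/g=\gamma$. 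This is exactly the mechanism behind Zhai's proof of Theorem~\ref{thm_zhai}, refined here to retain the dependence on $m$.

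For part (2), by part (1) we may condition on $q(S)\le 3$ and on $m=m(S)$ lying within $\epsilon g$ of $\gamma g$; then $|A_0|+|B_0|=3m-1-g$ together with $|A_0|,|B_0|\le m$ forces $|A_0|,|B_0|\ge m/\phi+o(g)$, so $A_0$ is dense in $[0,m)$. If $q(S)=3$ then $F(S)-2m=\max\big([0,m)\setminus B_0\big)\le\max\big([0,m)\setminus(A_0+A_0)\big)$, which is smaller than any prescribed $\epsilon' g$ once $A_0+A_0\supseteq[\epsilon' g,m)$; a dense $A_0$ has this property unless it is structured, with its sumset omitting an initial block of length $\ge\epsilon' g$, and the number of semigroups arising from such exceptional $A_0$ is $o(\phi^g)$, again by a sumset/structure count — this time tracking \emph{where} $A_0+A_0$ stops being an initial segment. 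If instead $q(S)\le 2$, one argues by hand: with $m\approx\gamma g$, $A_0\setminus\{0\}$ is a uniform $(2m-2-g)$‑subset of an $(m-1)$‑element set (density $1/\phi$), and $2m-F(S)$ equals one plus the length of the run of consecutive integers of $A_0$ ending at $m-1$, which a routine binomial tail bound shows is $O(\log g)$ with probability $1-o(1)$. Combining the two cases gives part (2). Overall, the hard part is the sumset‑counting estimate of the third paragraph (and its refinement in the fourth): everything hinges on an additive‑combinatorial count precise enough to survive exponentiation, and on the explicit optimization pinning the growth rate to $\phi$ exactly when $m\approx\gamma g$.
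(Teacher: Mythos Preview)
The paper does not prove Theorem~\ref{Kaplan Mult}; it is quoted from \cite{KaplanYe} and used as an input throughout. So there is no in-paper proof to compare against. What the paper does contain is the Zhao--Zhai machinery (Section~\ref{sec:F3m}, Theorem~\ref{Zhai summation of c}) and the binomial-concentration Lemma~\ref{lemma:binom}, whose proof it says is ``similar to the proof of \cite[Proposition~7]{KaplanYe}''; together these make clear how the cited argument runs, and it is not the route you take.

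Your parametrization by $(A_0,B_0)$, the displayed formula for $N_{\le 3}(g,m)$, and the depth-$2$ lower bound via the Fibonacci diagonal are all correct. The genuine gap is the upper bound for part~(1). You propose grouping $A_0$ by $(\ell,c)=(|A_0|,C(A_0))$ and invoking an ``additive-combinatorial estimate --- sharp at the exponential scale'' for the number of $A_0\subseteq[0,m)$ of given size with given truncated-sumset size, then optimizing over $(\ell,c)$ to show the rate is $\log\phi$ only at $m/g=\gamma$. You explicitly flag this as ``the real work'' and do not carry it out. That is not a minor omission: a sumset-enumeration count sharp to the base of the exponential, uniform in $(\ell,c,m)$, is a hard theorem in its own right, and contrary to your remark it is not what Zhai proves. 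Your part~(2) plan has the same shape --- a structure theorem for dense $A_0$ controlling $\max\big([0,m)\setminus(A_0+A_0)\big)$ --- and the same unexecuted step.

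The route in \cite{KaplanYe} sidesteps this by decomposing differently: not by all of $A_0$, but by $k=F(S)-2m(S)$ and the ``type'' $A=A_0\cap[0,k]\in A_k$ (Propositions~\ref{Zhao F<2m} and~\ref{Zhao F<3m}). For each fixed $(k,A)$ the count is a single binomial $\binom{m-1-l_1}{g-m+1-l_2}$ in $m$, with $l_1,l_2$ depending only on $(k,A)$; Lemma~\ref{lemma:binom} shows each such diagonal sums to $\phi^{g-l_1-l_2+1}/\sqrt5+o(\phi^g)$ and is already concentrated at $m=\gamma g$ by a direct Stirling estimate. Theorem~\ref{Zhai summation of c} then says the weights $\phi^{-l_1-l_2+1}$ summed over all $(k,A)$ converge to $\sqrt5\,c$. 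Part~(1) becomes ``each slice concentrates at $\gamma g$, and the slices form a convergent series''; part~(2) is immediate because $\P_g[F-2m=k]$ is asymptotically proportional to a summable sequence in $k$ (this is exactly the content of Proposition~\ref{F 2m}). So the published argument trades your global sumset-enumeration step for per-type binomial estimates plus Zhai's convergence theorem --- the additive combinatorics that remains lives on the small sets $A\subseteq[0,k)$, not on the large sets $A_0\subseteq[0,m)$, and is already packaged in Theorem~\ref{Zhai summation of c}.
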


Theorem \ref{Kaplan Mult} implies that for fixed $\epsilon>0$,
\[
\lim_{g\to\infty}\P_{g}[|F(S)-2\gamma g |< \epsilon g]=1.
\]
Singhal strengthens Theorem \ref{Kaplan Mult}(2) in \cite{Singhal}.
\begin{proposition}\cite[Theorem 8]{Singhal}\label{F 2m}
Given $\epsilon>0$, there is an $M(\epsilon) >0$ such that for all $g > 0$ we have
\[
\P_g[|F(S)-2m(S)|>M(\epsilon)]<\epsilon.
\]
\end{proposition}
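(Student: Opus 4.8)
The plan is to obtain both tails of $F(S)-2m(S)$ from an explicit parametrization of the numerical semigroups with $F(S)<3m(S)$ and from exponential tail bounds for the combinatorial data in that parametrization. Since $\mathcal{S}_g$ is finite, $|F(S)-2m(S)|$ is bounded on it, so for any threshold $g_0$ one may take $M(\eps)$ larger than $\max_{g\le g_0}\max_{S\in\mathcal{S}_g}|F(S)-2m(S)|$; thus it suffices to treat $g>g_0$, with $g_0=g_0(\eps)$ fixed later. For such $g$, Theorem~\ref{Kaplan Mult} lets us discard, at total probability $<\eps/4$, every $S$ with $m(S)\notin\big((\gamma-\del)g,(\gamma+\del)g\big)$ and every $S$ with $F(S)\ge 3m(S)$ — for the latter, $F(S)\ge 3m(S)$ and $m(S)>(\gamma-\del)g$ force $F(S)-2m(S)>(\gamma-\del)g$, of vanishing probability by part~(2). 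Each remaining $S$ has $[3m,\infty)\subseteq S$ with $m=m(S)$, and is recorded faithfully by the two sets $A_1=\{a\in\{1,\dots,m-1\}:m+a\in S\}$ and $A_2=\{a\in\{0,\dots,m-1\}:2m+a\in S\}$; these range over exactly the pairs with $C(A_1)\subseteq A_2$, where $C(A_1)=\{0\}\cup A_1\cup\big((A_1+A_1)\cap\{1,\dots,m-1\}\big)$. One has $g(S)=3m-2-|A_1|-|A_2|$, and $F(S)-2m$ equals $\max\big(\{0,\dots,m-1\}\setminus A_2\big)$ when $A_2\neq\{0,\dots,m-1\}$ and equals $\max\big(\{1,\dots,m-1\}\setminus A_1\big)-m$ when $A_2=\{0,\dots,m-1\}$.

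I then split the surviving semigroups by the sign of $F(S)-2m$. The case $F(S)<2m$ is exactly $A_2=\{0,\dots,m-1\}$, and there $F(S)-2m<-M$ means $\{m-M,\dots,m-1\}\subseteq A_1$. For fixed admissible $m$ the semigroups of genus $g$ with $F<2m$ and multiplicity $m$ biject with the $(2m-2-g)$-subsets $A_1\subseteq\{1,\dots,m-1\}$, and the fraction of these containing a prescribed $M$-element set is at most $\big(\tfrac{2m-2-g}{m-1}\big)^{M}$; since $\tfrac{2m-2-g}{m-1}\to\phi^{-1}$ as $m/g\to\gamma$, this is $\le(\phi^{-1}+c\del)^{M}$ for $m$ in our window. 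Summing over $m$ — and noting that the total number of genus-$g$ semigroups with $F<2m$ is $\sum_m\binom{m-1}{2m-2-g}$, a sum whose terms peak sharply at $m\approx\gamma g$ — gives $\P_g[F(S)-2m<-M]\le(\phi^{-1}+c\del)^{M}+o(1)$.

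For $2m\le F(S)<3m$, write $b=F(S)-2m=\max(\{0,\dots,m-1\}\setminus A_2)$ and $D=\{0,\dots,m-1\}\setminus A_2$, a nonempty set with $\max D=b$. The key device is the injection $S\mapsto(\widehat S,D)$, where $\widehat S=S\cup\{2m+d:d\in D\}$ is $S$ with all of its gaps in $[2m,3m)$ filled in: one checks that $\widehat S$ is again a numerical semigroup, now with $F(\widehat S)<2m(\widehat S)=2m$ and $g(\widehat S)=g-|D|$, and that $(\widehat S,D)\mapsto \widehat S\setminus\{2m+d:d\in D\}$ inverts it precisely when $D$ is a nonempty subset of $\{1,\dots,m-1\}\setminus C(\widehat A_1)$ of size $g-g(\widehat S)$, where $\widehat A_1$ is the $A_1$-set of $\widehat S$ (equal to that of $S$). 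Since $b=\max D$, the event $F(S)-2m>M$ forces $C(\widehat A_1)$ to omit an element of $\{M+1,\dots,m-1\}$. But $\widehat S$ ranges over semigroups of genus $g-|D|<g$ with $F<2m$, for which $\widehat A_1$ is a near-uniform subset of $\{1,\dots,m-1\}$ of density $\to\phi^{-1}>\tfrac12$; a pigeonhole count over the $\approx k/2$ disjoint pairs $\{i,k-i\}$ shows $k\notin\widehat A_1+\widehat A_1$ has probability at most $C\rho^{\,k}$ with $\rho=\sqrt{1-\phi^{-2}}=\phi^{-1/2}<1$. Hence the number of admissible $\widehat S$ that omit an element above $M$ from $C(\widehat A_1)$ is at most $C\rho^{M}$ times the total, and, after bounding the fibre over each $\widehat S$ by $\binom{|\{1,\dots,m-1\}\setminus C(\widehat A_1)|}{g-g(\widehat S)}$ and summing the resulting geometric-and-genus series, $\P_g[F(S)-2m>M,\ F(S)\ge 2m]\le C'\rho^{M}+o(1)$.

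Finally, choose $\del$ small, then $M=M(\eps)$ so that $(\phi^{-1}+c\del)^{M}<\eps/4$ and $C'\rho^{M}<\eps/4$, and $g_0$ so large that every $o(1)$ term above is $<\eps/4$ for $g>g_0$; this gives $\P_g[\,|F(S)-2m(S)|>M\,]<\eps$ for all $g$. The step I expect to be the real obstacle is the third paragraph: one must make the "a random subset of density $>\tfrac12$ has an essentially full sumset" heuristic quantitative and uniform in $g$ — controlling the joint distribution of the elements omitted by $C(\widehat A_1)$, including how large a fibre a given $\widehat S$ can have — and must do this while respecting the constraint tying $A_2$ to $A_1$. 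In practice this is cleanest when one counts the pairs $(A_1,A_2)$ directly and compares against the (Fibonacci) count $\sum_m\binom{m-1}{2m-2-g}$ of semigroups with $F<2m$, using also the exponential concentration of $m(S)$ around $\gamma g$ that comes out of the proof of Theorem~\ref{Kaplan Mult}.
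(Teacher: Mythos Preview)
First, note that the paper does not prove this proposition; it is quoted from \cite{Singhal} and used as a black box throughout. So there is nothing in the paper to compare against, and your proposal must be judged on its own merits.

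Your lower-tail argument is correct but more complicated than necessary: you do not need to condition on $m$ near $\gamma g$. The semigroups in $\SS_g$ with $F(S)<2m(S)-M$ all lie in $\B(g)$, and for fixed $m$ they correspond to those $A_1\subseteq\{1,\dots,m-1\}$ of size $2m-g-2$ containing $\{m-M,\dots,m-1\}$, of which there are $\binom{m-1-M}{2m-g-2-M}$. Summing over $m$ and reindexing by $m'=m-M$ gives exactly $|\B(g-M)|=F_{g-M+1}$. Since $N(g)\ge|\B(g)|=F_{g+1}$ for every $g\ge 1$, one obtains $\P_g[F(S)-2m(S)<-M]\le F_{g-M+1}/F_{g+1}\le C\phi^{-M}$ uniformly in $g$, with no appeal to Theorem~\ref{Kaplan Mult} at all.

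The upper tail is where the real gap lies, and you have located it correctly. Two concrete problems with the sketch: (i) the $\lfloor k/2\rfloor$ events ``$\{i,k-i\}\not\subseteq\hat A_1$'' are not independent when $\hat A_1$ is a uniformly random subset of \emph{fixed} size, so the bound $C\rho^k$ needs a correlation inequality you have not supplied; (ii) more seriously, you must sum over $\hat S$ ranging over $\bigcup_{g'<g}\B(g')$, each weighted by a fibre $\binom{|\{1,\dots,m-1\}\setminus C(\hat A_1)|}{g-g(\hat S)}$, and the $\hat S$ with the largest fibres are exactly those with small $C(\hat A_1)$, so the bad set is up-weighted rather than down-weighted. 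Carrying this through rigorously amounts to proving that $\sum_{A\in A_k}\phi^{|A|-|(A+A)\cap[0,k]|-k}$ decays in $k$, which is precisely the content of Zhai's convergence (Theorem~\ref{Zhai summation of c}). Once one grants that theorem the upper tail is immediate: Proposition~\ref{Zhao F<3m} gives $|\cC(k,A,g)|\le F_{g+|A|-|(A+A)\cap[0,k]|-k-1}$, whence $\sum_{k>M}|\cC(k,g)|/N(g)$ is bounded by the tail of Zhai's convergent series, uniformly for large $g$; the residual piece $F(S)\ge 3m(S)$ is $o(1)$ by Zhai as well. Your injection $S\mapsto(\hat S,D)$ is Zhao's parametrization in disguise, so pushing your route to completion would amount to re-proving Zhai's theorem rather than invoking it.
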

\noindent Recently, Zhu has proven a stronger result of this kind but we will not need it for the applications in this paper \cite[Theorem 6.1]{Zhu}.

\subsection{The Distribution of Invariants of Numerical Semigroups in $\SS_g$}

We show that most numerical semigroups of genus $g$ have embedding dimension close to $\frac{1}{\sqrt{5}}g$, type close to $(1-\gamma)g$, and weight close to $\frac{1}{10\varphi} g^2$.

\begin{thm}\label{main result}
Fix $\epsilon > 0$. We have
\begin{enumerate}
\item 
\[
\lim_{g\to\infty}\P_{g}\left[\left|e(S)-\frac{1}{\sqrt{5}} g \right|< \epsilon g\right]  =  1,\ \text{and}
\]
\item 
\[
\lim_{g\to\infty}\P_{g}\left[\left|t(S)-(1-\gamma) g \right|< \epsilon g\right]  =  1.
\]
\end{enumerate}
\end{thm}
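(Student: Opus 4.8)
The plan is to pair the quoted concentration results with an explicit combinatorial model of a ``typical'' numerical semigroup, reducing both parts to short first--moment estimates. Fix $\epsilon>0$. By Theorem~\ref{Kaplan Mult}(1), $\P_g[\,|m(S)-\gamma g|\ge\delta g\,]\to 0$ for each fixed $\delta>0$, and by Proposition~\ref{F 2m} we may fix a constant $M$ so that $\P_g[\,F(S)>2m(S)+M\,]<\epsilon/2$ for all $g$. Call $S$ \emph{good} if $|m(S)-\gamma g|<\delta g$ and $F(S)\le 2m(S)+M$; then $\P_g[S\text{ not good}]<\epsilon$ once $g$ is large. A good $S$ with $m=m(S)$ contains every integer exceeding $2m+M$, so it is described completely by $K=K(S):=\{k\in\{0,\dots,m-1\}:m+k\in S\}\ni 0$ and the bounded tail $B=B(S):=\{j\in\{0,\dots,M\}:2m+j\in S\}$; conversely $\{0\}\cup(m+K)\cup(2m+B)\cup\{n:n>2m+M\}$ is a numerical semigroup of multiplicity $m$ precisely when $(K+K)\cap\{0,\dots,M\}\subseteq B$, and counting the gaps of $S$ gives $|K|+|B|=2m+M-g$, hence $|K|=2m-g+O(M)$. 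In view of $2\gamma-1=1/\sqrt5$ and $|m-\gamma g|<\delta g$, the density $|K|/m$ then lies within $O(\delta)$ of $1/\phi$.

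Next, one reads $e(S)$ and $t(S)$ off of $(K,B)$. Every element of $S\cap[m,2m)$ is a minimal generator (a sum of two nonzero elements of $S$ is $\ge 2m$); an element $2m+j\in S$ with $0\le j<m$ is a minimal generator exactly when $j\notin K+K$; and at most $M$ elements of $S$ exceeding $3m$ can be minimal generators. Writing $R(K):=\#\{j\in\{0,\dots,m-1\}:j\notin K+K\}$, this gives $e(S)=|K|+R(K)+O(M)$. For the type, each gap $m+\ell$ with $\ell\in\{1,\dots,m-1\}\setminus K$ and $\ell>M$ is pseudo-Frobenius, since adding to it any nonzero element of $S$ (all $\ge m$) produces a number $>F(S)$; as the only other gaps are the at most $2M$ gaps inside $[m,m+M]\cup[2m,2m+M]$ together with the integers in $\{1,\dots,m-1\}$, we obtain $t(S)=(m-|K|)+O(M)+\#\bigl(PF(S)\cap\{1,\dots,m-1\}\bigr)$. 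Since $|K|=2m-g+O(M)$, so that $m-|K|=g-m+O(M)$, together with $m=\gamma g+O(\delta g)$ and $2\gamma-1=1/\sqrt5$, it therefore suffices to show that for each fixed $c>0$ the good semigroups with $R(K)\ge cg$, respectively with $\#(PF(S)\cap\{1,\dots,m-1\})\ge cg$, form a $o(1)$--fraction of $\SS_g$; the theorem then follows on taking $\delta\to 0$ and $\epsilon\to 0$.

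To do this, observe that a good $S$ is determined by $(m,K,B)$ with $B$ among at most $2^{M+1}$ subsets, so after dropping the constraint tying $K$ to $B$ it is enough to estimate, for each admissible $m$ and each of the $O(M)$ admissible values of $|K|$, the product $\binom{m-1}{|K|-1}\cdot\Pr[\,R(K)\ge cg\,]$ — and similarly with $\#(PF(S)\cap\{1,\dots,m-1\})$ in place of $R(K)$ — where now $K$ is a uniformly random $|K|$--element subset of $\{1,\dots,m-1\}$ adjoined with $0$. For $R$ a first moment suffices: for $j\ge 1$, $j\notin K+K$ forces $\neg(a\in K\wedge j-a\in K)$ for the pairwise disjoint pairs $\{a,j-a\}$, $1\le a\le\lfloor(j-1)/2\rfloor$, so $\Pr[j\notin K+K]\le\bigl(1-(|K|/m)^2\bigr)^{\lfloor(j-1)/2\rfloor}$ and $\E[R(K)]=O(1)$ uniformly, whence $\Pr[R(K)\ge cg]=O(1/g)$ by Markov. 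Crucially, the sum over $m$ then loses no polynomial factors: $\sum_m\binom{m-1}{2m-2-g}$ is a Fibonacci number, and each diagonal sum $\sum_m\binom{m-1}{2m-g+d}$ with $|d|=O(M)$ that appears is again a Fibonacci number of size $\Theta(\phi^g)$, matching $N(g)=\Theta(\phi^g)$ from Theorem~\ref{thm_zhai} (equivalently, these binomial coefficients attain size $\phi^g$ at the dominant multiplicity $m\approx\gamma g$ because of the identity $\gamma\cdot H(1/\phi)=\log_2\phi$, where $H(x)=-x\log_2x-(1-x)\log_2(1-x)$). Hence $\P_g[\,S\text{ good and }R(S)\ge cg\,]=O(1/g)\to 0$, which with the formula $e(S)=|K|+R(K)+O(M)$ gives part~(1).

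The genuine obstacle is the analogous bound $\E\bigl[\#(PF(S)\cap\{1,\dots,m-1\})\bigr]=O(1)$ needed for part~(2), because the constraint that a small pseudo-Frobenius number places on $K$ has two faces. A gap $P=m-j$ can be pseudo-Frobenius only if $K$ is (essentially) closed under translation by $P$ on $\{0,\dots,j-1\}$. For small $j$ this decouples into $\Theta(j)$ nearly independent conditions, yielding $\Pr[P\in PF(S)]\le(|K|/m)\bigl(1-|K|/m+(|K|/m)^2\bigr)^{\,j-1}$ and a convergent sum over $j$; for large $j$ it instead forces $K$ to be a union of final segments of arithmetic progressions of common difference $P$, so that the number of admissible $K$ of a given size is at most $(m/P+1)^{P}$, and one needs the entropy inequality $\sup_{t\in(0,1/2]}t\log_2(1+1/t)<H(1/\phi)$ to overwhelm $\binom{m-1}{|K|-1}$ in this range. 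Carrying out both regimes, together with the $O(M)$ boundary values of $P$, with bounds strong enough to survive multiplication by $\binom{m-1}{|K|-1}$ and summation over $m$, is where the bulk of the work lies; the estimate for $e(S)$ above is soft by comparison.
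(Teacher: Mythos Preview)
Your argument for part~(1) is correct and is essentially the paper's proof recast probabilistically: the paper splits $e(S)=e_1(S)+e_2(S)$ with $e_1(S)=\#(S\cap[m,2m-1])$ (your $|K|$), and bounds the first moment of $e_2$ over $\B(g)$ and over each $\cC(k,g)$ by a Fibonacci number using the same disjoint-pairs inclusion--exclusion on $\{a,j-a\}$ that underlies your estimate for $R(K)$.

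Part~(2), by your own admission, is not a proof: you write that ``carrying out both regimes \ldots\ is where the bulk of the work lies.'' The outlined entropy argument for small $P$ can in fact be completed (the inequality $\sup_{t\le 1/2}t\log_2(1+1/t)=\tfrac12\log_2 3<H(1/\phi)$ does hold, with a uniform gap), but it is heavier than needed. The paper's observation is that for $P\le(m-1)/2$ one need not use the full closure condition ``$k\in K,\ k+P<m\Rightarrow k+P\in K$''; retaining only the instances $k\in\{1,\dots,P-1\}$, together with $P,2P\in K$, already gives conditions on pairwise disjoint pairs $\{i,i+P\}$, so the same decoupling you invoked for large $P$ applies and yields a bound of the shape $p^2(1-p+p^2)^{P-1}$. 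Both regimes thus collapse to a single geometric sum, which the paper packages via the generating-function identity $\sum_{l}(-1)^l\binom{n}{l}x^l(1+x)^{-2l}=\bigl((1+x+x^2)/(1+x)^2\bigr)^n$ into the clean bounds $\sum_{S\in\B(g)}t_2(S)\le F_{g+4}$ and $\sum_{S\in\cC(k,g)}t_2(S)\le F_{g+k+3}$; then $\frac1g\E_g[t_2]\to 0$ follows exactly as in your part~(1). So the ``genuine obstacle'' you flag dissolves once one is willing to discard most of the constraints rather than exploit them all.
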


As a direct consequence we can compute the expected values of $e(S)$ and $t(S)$ taken over semigroups in $\SS_g$.
\begin{corollary}\label{main_cor}
We have
\begin{enumerate}
\item
\[
\lim_{g\to\infty}\frac{1}{g}\E_g[e(S)]  =  \frac{1}{\sqrt{5}},\ \text{and}
\]
\item 
\[
\lim_{g\to\infty}\frac{1}{g}\E_g[t(S)]  =  1-\gamma.
\]
\end{enumerate}
\end{corollary}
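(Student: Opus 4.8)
The plan is to deduce Corollary \ref{main_cor} from Theorem \ref{main result} by a standard ``concentration implies convergence of normalized expectation'' argument, the only subtlety being that $e(S)$ and $t(S)$ are unbounded random variables on $\SS_g$, so I need an a priori upper bound to control the contribution of the (vanishingly rare) semigroups on which these invariants are large. First I would record the trivial deterministic bounds: every minimal generator and every pseudo-Frobenius number lies in $\{1,\dots,2g\}$ (indeed the Frobenius number satisfies $F(S)\le 2g-1$, since the interval $[g+1,2g]$ or similar must meet $S$ by a counting argument, and every element of $\A(S)\cup PF(S)$ is at most $F(S)+m(S)\le 2F(S)$), giving $e(S)\le 2g$ and $t(S)\le 2g$ for all $S\in\SS_g$. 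Even the crude bound $e(S), t(S)\le g+1$ would suffice; what matters is a linear-in-$g$ bound with an explicit constant.

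Next, fix $\epsilon>0$ and split the expectation according to whether $|e(S)-\tfrac{1}{\sqrt5}g|<\epsilon g$ or not. On the ``good'' event the integrand is within $\epsilon g$ of $\tfrac{1}{\sqrt5}g$, contributing between $(\tfrac1{\sqrt5}-\epsilon)g\cdot\P_g[\text{good}]$ and $(\tfrac1{\sqrt5}+\epsilon)g\cdot\P_g[\text{good}]$; on the ``bad'' event I bound $e(S)$ by the deterministic $O(g)$ bound and multiply by $\P_g[\text{bad}]$, which Theorem \ref{main result}(1) tells me tends to $0$. Dividing through by $g$ and letting $g\to\infty$, the bad contribution vanishes and $\P_g[\text{good}]\to1$, so $\limsup$ and $\liminf$ of $\tfrac1g\E_g[e(S)]$ are both trapped in $[\tfrac1{\sqrt5}-\epsilon,\tfrac1{\sqrt5}+\epsilon]$. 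Since $\epsilon>0$ is arbitrary, the limit equals $\tfrac1{\sqrt5}$. The argument for $t(S)$ is verbatim the same with $1-\gamma$ in place of $\tfrac1{\sqrt5}$ and Theorem \ref{main result}(2) in place of (1).

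There is essentially no main obstacle here; this is a routine consequence once Theorem \ref{main result} is in hand. The one point requiring a sentence of care is the deterministic linear bound on $e(S)$ and $t(S)$, which guarantees that $\tfrac1g e(S)$ and $\tfrac1g t(S)$ are uniformly bounded random variables on $\bigcup_g \SS_g$ so that bounded convergence (in the elementary form used above) applies; this is why I would state it explicitly rather than invoke a general ``concentration implies $L^1$ convergence'' black box, which would need exactly such a tail bound as a hypothesis. If one wanted to be slightly slicker, one could instead note that $\tfrac1g\E_g[e(S)] = \int_0^\infty \P_g[e(S)>tg]\,dt$ and apply dominated convergence to the integrand on $[0,2]$, but the two-term split is more transparent and I would present that version.
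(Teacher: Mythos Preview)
Your proposal is correct and follows essentially the same approach as the paper: the paper packages the ``concentration plus uniform linear bound implies convergence of normalized expectation'' argument into a standalone lemma (Lemma~\ref{Lemma Average}), invokes the bounds $e(S)\le g+1$ and $t(S)\le g$ (stated earlier in the paper), and applies the lemma together with Theorem~\ref{main result}. Your inline two-term split is exactly the proof of that lemma, so the only difference is presentational; note that the paper's sharper bounds $e(S)\le g+1$, $t(S)\le g$ are cleaner than your $2g$ bounds (whose justification is a bit loose as written), but as you say any linear bound suffices.
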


In \cite[Theorem 22]{KaplanYe}, Kaplan and Ye use the Hardy-Ramanujan asymptotic formula for the number of partitions of $n$ to prove that with high probability, a random $S \in \SS_g$ satisfies 
\[
0.03519 g^2 < w(S) < 0.0885 g^2.
\]
They state that it would be interesting to try to improve the constants in these inequalities.  We achieve this goal by proving a result for the distribution of $w(S)$ taken over semigroups in $\SS_g$ analogous to Theorem \ref{main result}.  The proof strategy for this result is different than the strategy for Theorem \ref{main result}.  We first compute the expected value and variance of $w(S)$, and then use these results to deduce our result about the distribution.
\begin{thm}\label{Weight result}
Fix $\epsilon > 0$. We have
\begin{enumerate}
    \item \[
\lim_{g\to\infty}\frac{1}{g^2}\E_g[w(S)] =  \frac{1}{10\phi},\ \text{and}
\]
    \item \[
\lim_{g\to\infty}\P_{g}\left[\left|w(S)-\frac{1}{10\phi} g^2 \right|< \epsilon g^2\right]  = 1.
\]
\end{enumerate}
\end{thm}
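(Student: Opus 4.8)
The plan is to follow the two-step strategy announced above: first determine $\E_g[w(S)]$ and show that $\Var_g[w(S)] = o(g^4)$, and then deduce part (2) from part (1) by Chebyshev's inequality. Both moment estimates rest on the structural picture of the ``dominant'' semigroups of genus $g$, namely those with $F(S) < 2m(S)$. If $S$ has multiplicity $m$ and $F(S) < 2m$, then $[2m,\infty) \subseteq S$, $[1,m-1] \cap S = \emptyset$, and $m \in S$, so $S$ is completely determined by an arbitrary subset $T = S \cap [m+1,2m-1]$, and counting gaps forces $|T| = 2m-2-g$. Summing the gaps, which are $[1,m-1] \cup \big([m+1,2m-1]\setminus T\big)$, gives the closed form
\[
w(S) \;=\; 2m(m-1) \;-\; \binom{g+1}{2} \;-\; \sum_{t \in T} t .
\]
By Theorem~\ref{Kaplan Mult} and Proposition~\ref{F 2m}, given $\epsilon>0$ there are $\delta>0$ and $M=M(\epsilon)$ so that all but an $\epsilon$-fraction of $S\in\SS_g$ satisfy $|m(S)-\gamma g|<\delta g$ and $F(S)-2m(S)\le M$; for those with $0\le F(S)-2m(S)=j\le M$ an almost identical description holds, the only difference being $O(j)=O(M)$ ``constrained'' coordinates near $m$ and near $2m$ that are linked by the closure conditions, perturbing the formula above by $O(Mg)$. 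Since $w(S)=O(g^2)$ for every $S\in\SS_g$ (using $F(S)\le 2g(S)-1$), the semigroups outside this dominant family contribute only $O(\epsilon g^2)$ to the first moment and $O(\epsilon g^4)$ to the second.

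For the expectation I would condition on $m(S)=m$ (and on $j=F(S)-2m(S)$): the free part of $T$ is then a uniformly random subset of a set of roughly $m$ integers, so
\[
\E_g\Big[\textstyle\sum_{t\in T} t \,\Big|\, m\Big] \;=\; \frac{2m-2-g}{m-1}\sum_{r=m+1}^{2m-1} r \;+\; O(Mg).
\]
Substituting into the closed form gives $\E_g[w(S)\mid m] = g^2 f(m/g) + O(Mg)$, where $f(x) = -x^2 + \frac{3}{2}x - \frac12$. Averaging over $m$ — which is concentrated near $\gamma g$ by Theorem~\ref{Kaplan Mult}, with Theorem~\ref{thm_zhai} used to turn the sum over $m$ into a clean asymptotic — and using that $f$ is continuous together with the identity $f(\gamma) = \frac{\sqrt5-1}{20} = \frac{1}{10\phi}$, we obtain part~(1). (One sends $\epsilon\to 0$ after $g\to\infty$, so the fixed quantity $M$ makes $O(Mg)=o(g^2)$.)

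For the variance I would use the law of total variance, conditioning on $m(S)$ (and $j$ when $F(S)\ge 2m(S)$): $\Var_g[w(S)] = \E_g\big[\Var(w(S)\mid m(S))\big] + \Var_g\big(\E[w(S)\mid m(S)]\big)$. Conditionally, $w(S)$ is a deterministic function of $m$ (and $j$) minus the sum of a uniformly chosen subset of $O(g)$ integers each of size $O(g)$; the variance of such a sample sum (chosen without replacement) is $O(g^3)$, with the bounded number of constrained coordinates adding only a lower-order term, so the first summand is $o(g^4)$. For the second summand, $\E[w(S)\mid m(S)] = g^2 f(m(S)/g) + O(Mg)$ with $f$ bounded and continuous and $m(S)/g \to \gamma$ in probability, so $\E[w(S)\mid m(S)]/g^2 \to f(\gamma)$ in probability while staying bounded; dominated convergence then gives $\Var_g\big(\E[w(S)\mid m(S)]\big) = o(g^4)$. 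Hence $\Var_g[w(S)] = o(g^4)$, and Chebyshev's inequality yields $\P_g\big[\,|w(S)-\E_g[w(S)]| \ge \tfrac{\epsilon}{2} g^2\,\big] \to 0$; combining with part~(1) proves part~(2).

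The hard part will be the uniform control of the semigroups that are \emph{not} in the dominant family. Because $\P_g[F(S)\ge 2m(S)]$ does not tend to $0$, it is not enough to discard these; one must verify that for each fixed $j$ the semigroups with $F(S)=2m(S)+j$ obey the same leading-order formula for $w(S)$ up to $O(jg)$ (so their contribution to $\E_g[w(S)]$ is still $\sim\frac{g^2}{10\phi}$ and their conditional variance is still $o(g^4)$), while the genuinely atypical semigroups with $F(S)-2m(S)>M(\epsilon)$, whose structure is not under control, get absorbed into $O(\epsilon g^2)$ and $O(\epsilon g^4)$ error terms via the a priori bound $w(S)=O(g^2)$. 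Arranging this bookkeeping — together with the precise asymptotic count from Theorem~\ref{thm_zhai} needed to pass from the sum over $m$ to the limit — is the technical heart of the argument.
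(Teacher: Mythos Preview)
Your approach is correct but differs substantially from the paper's. The paper does not condition on $m(S)$ at all; instead it first develops (in Section~\ref{sec:ind_prob}) a general result, Theorem~\ref{prob C in S and C' not in S}, showing that for integers $n_1,\dots,n_l$ bounded away from $\gamma g$ and $2\gamma g$ the events $\{n_i\in S\}$ are asymptotically independent with $\P_g[n\in S]\approx f_1(n/g)$ for an explicit step function $f_1$. Part~(1) then follows immediately from linearity, $\E_g[\alpha(S)]=\sum_n n\,\P_g[n\notin S]$, and the one-point case of this result; for part~(2) the paper computes $\E_g[\alpha(S)^2]=\sum_{i,j}ij\,\P_g[\{i,j\}\cap S=\emptyset]$ using the two-point case, obtains $\Var_g[w(S)]=o(g^4)$, and finishes with Chebyshev (Lemma~\ref{Concentration by Chebychev}).

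Your route---condition on $(m,j)$, write $w(S)$ as a deterministic function of $m$ minus a sample sum over a uniformly random subset, and then invoke the law of total variance---reaches the same endpoint and is sound. Its advantage is that it is more self-contained: the sampling-without-replacement variance bound $O(g^3)$ is elementary, whereas the paper's asymptotic independence theorem is a nontrivial result in its own right (its proof uses Theorem~\ref{Zhai summation of c} and a binomial-sum estimate). The price you pay is exactly the bookkeeping you flag at the end: for each $j=F(S)-2m(S)\in[1,M]$ you must invoke the Zhao structure (Proposition~\ref{Zhao F<3m}), sum over types $A\in A_k$, and verify the $O(Mg)$ perturbation uniformly---which is essentially the same work the paper packages once inside the proof of Lemma~\ref{phi -l}. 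The paper's approach also yields Theorem~\ref{prob C in S and C' not in S} as an independently useful byproduct, which your conditioning argument does not.
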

\noindent Note that $\frac{1}{10\varphi} \approx 0.0618$.

The main tool to prove this theorem is a result about the independence of the probability that a set of elements is contained in a semigroup in $\SS_g$.  We expect that this result is of independent interest. In joint work with Bras-Amor\'os we have applied it to a different statistical problem about semigroups in $\SS_g$ \cite{Shape NS}.  We return to this result in Section \ref{sec:ind_prob}.

Wilf asked in \cite{Wilf} whether every numerical semigroup $S$ satisfies
\[
\frac{1}{e(S)}\leq \frac{F(S)+1-g(S)}{F(S)+1}.
\]
This question is now commonly known as \emph{Wilf's conjecture} and has been the subject of extensive work in the numerical semigroups community.  Sammartano proved that if $S$ if a numerical semigroup with $e(S) \ge m(S)/2$, then $S$ satisfies Wilf's conjecture \cite[Theorem 18]{Sammartano}.  This result was improved by Eliahou, who showed that if $S$ satisfies $e(S) \ge m(S)/3$ then $S$ satisfies Wilf's conjecture \cite[Theorem 1]{Eliahou}.  Eliahou explains that Delgado has observed that more than $99.999\%$ of the numerical semigroups with genus at most $45$ have $e(S) \ge m(S)/3$. Eliahou asks whether the proportion of such semigroups goes to $1$ as $g$ goes to infinity. A direct consequence of Theorem \ref{main result}(1) is that not only do most semigroups satisfy the condition in Eliahou's result but most also satisfy the stronger condition in Sammartano's result.
\begin{corollary}
We have
\[
\lim_{g\to\infty}\P_g[e(S)\geq m(S)/2]= 1.
\]
\end{corollary}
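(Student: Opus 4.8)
The plan is to deduce this corollary by combining Theorem \ref{main result}(1), which pins down $e(S)$ near $\frac{1}{\sqrt 5}g$, with Theorem \ref{Kaplan Mult}(1), which pins down $m(S)$ near $\gamma g = \frac{1}{\sqrt 5}\phi g$. First I would observe that $\frac{1}{\sqrt 5}g$ is strictly larger than $\frac{1}{2}\gamma g = \frac{1}{2\sqrt 5}\phi g$, since this is equivalent to $2 > \phi$, which holds as $\phi = \frac{1+\sqrt 5}{2} \approx 1.618$. Concretely, $\frac{1}{\sqrt 5} - \frac{\gamma}{2} = \frac{1}{\sqrt 5}\big(1 - \frac{\phi}{2}\big) = \frac{2-\phi}{2\sqrt 5} > 0$, so there is a genuine gap between the typical embedding dimension and half the typical multiplicity.

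Next I would choose $\epsilon$ small enough to separate these two quantities with room to spare. Set $c = \frac{2-\phi}{2\sqrt 5} > 0$ and pick $\epsilon = c/3$ (any $\epsilon < c/2$ works, but this choice is clean). Then I would define the two ``good'' events
\[
E_g = \left\{ e(S) > \tfrac{1}{\sqrt 5}g - \epsilon g \right\}, \qquad M_g = \left\{ m(S) < \gamma g + \epsilon g \right\}.
\]
By Theorem \ref{main result}(1) we have $\P_g[E_g] \to 1$, and by Theorem \ref{Kaplan Mult}(1) we have $\P_g[M_g] \to 1$; hence $\P_g[E_g \cap M_g] \to 1$ as well, since $\P_g[E_g \cap M_g] \ge \P_g[E_g] + \P_g[M_g] - 1$.

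Finally I would check that on the event $E_g \cap M_g$ the desired inequality $e(S) \ge m(S)/2$ holds for all sufficiently large $g$. On this event,
\[
e(S) - \tfrac{1}{2}m(S) > \left(\tfrac{1}{\sqrt 5} - \epsilon\right)g - \tfrac{1}{2}\left(\gamma + \epsilon\right)g = \left( c - \tfrac{3\epsilon}{2} \right)g = \tfrac{c}{2}\, g > 0,
\]
using $\epsilon = c/3$ and $\frac{1}{\sqrt 5} - \frac{\gamma}{2} = c$. Thus $E_g \cap M_g \subseteq \{e(S) \ge m(S)/2\}$ once $g > 0$, so $\P_g[e(S) \ge m(S)/2] \ge \P_g[E_g \cap M_g] \to 1$, giving the claim. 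There is no real obstacle here — the corollary is purely a matter of comparing the two asymptotic constants and noting that $\phi < 2$; the only thing to be careful about is choosing $\epsilon$ strictly smaller than half the gap $c$ so that the error terms do not eat up the separation.
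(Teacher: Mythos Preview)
Your proof is correct and is exactly the argument the paper has in mind. The paper does not spell out a proof, merely calling the corollary a ``direct consequence of Theorem \ref{main result}(1)'' (implicitly also invoking Theorem \ref{Kaplan Mult}(1) for the multiplicity); your write-up makes the implicit comparison of constants $\frac{1}{\sqrt{5}} > \gamma/2$, equivalently $\phi < 2$, explicit and chooses $\epsilon$ appropriately, which is precisely what is needed.
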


\begin{figure}
    \centering
    \includegraphics[height=0.2\textheight]{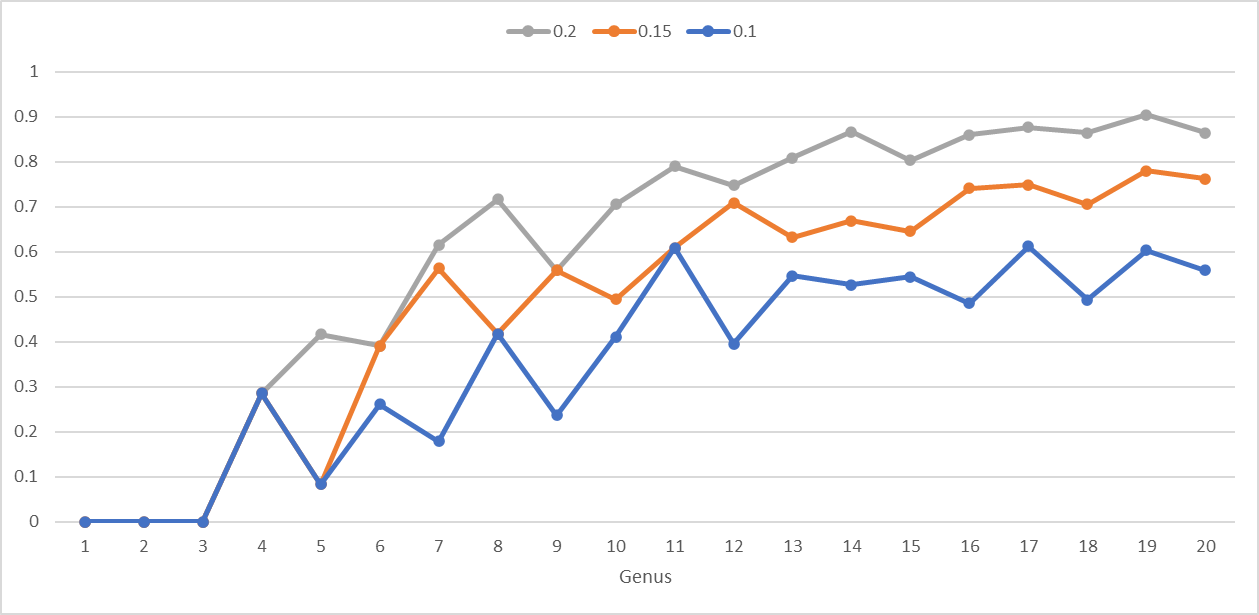}
    \caption{Proportion of $S \in \SS_g$ with $|e(S)-\frac{1}{\sqrt{5}}g|<\epsilon g$. \newline
    Plotted for $\epsilon=0.2$, $\epsilon=0.15$ and $\epsilon=0.1$.}
    \label{fig:my_label}
\end{figure}

\begin{figure}
    \centering
    \includegraphics[height=0.2\textheight]{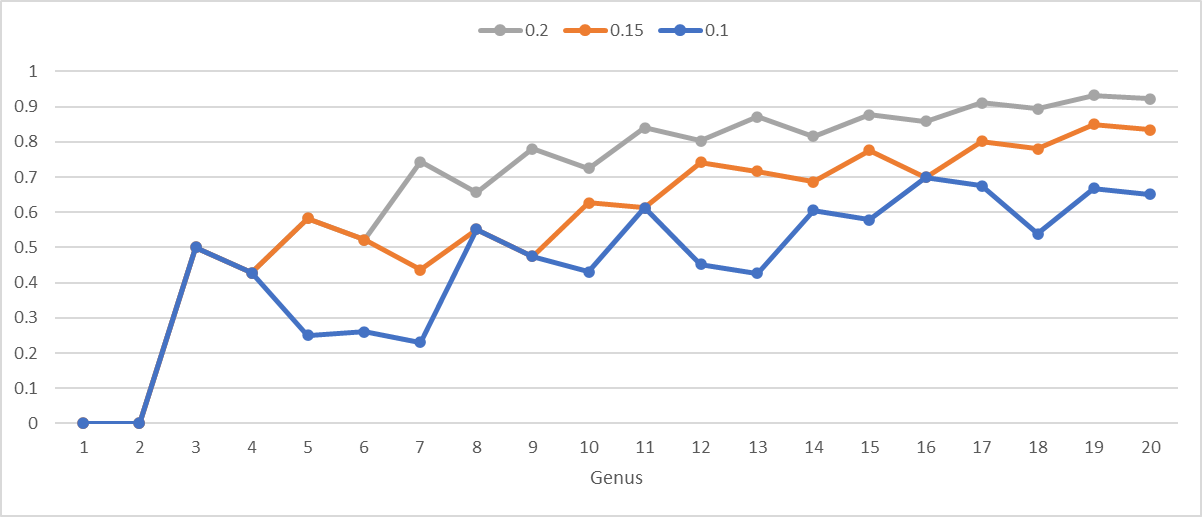}
    \caption{Proportion of $S \in \SS_g$ with $|t(S)-(1-\gamma)g|<\epsilon g$. \newline
    Plotted for $\epsilon=0.2$, $\epsilon=0.15$ and $\epsilon=0.1$.}
    \label{fig:my_label}
\end{figure}

\begin{figure}
    \centering
    \includegraphics[height=0.2\textheight]{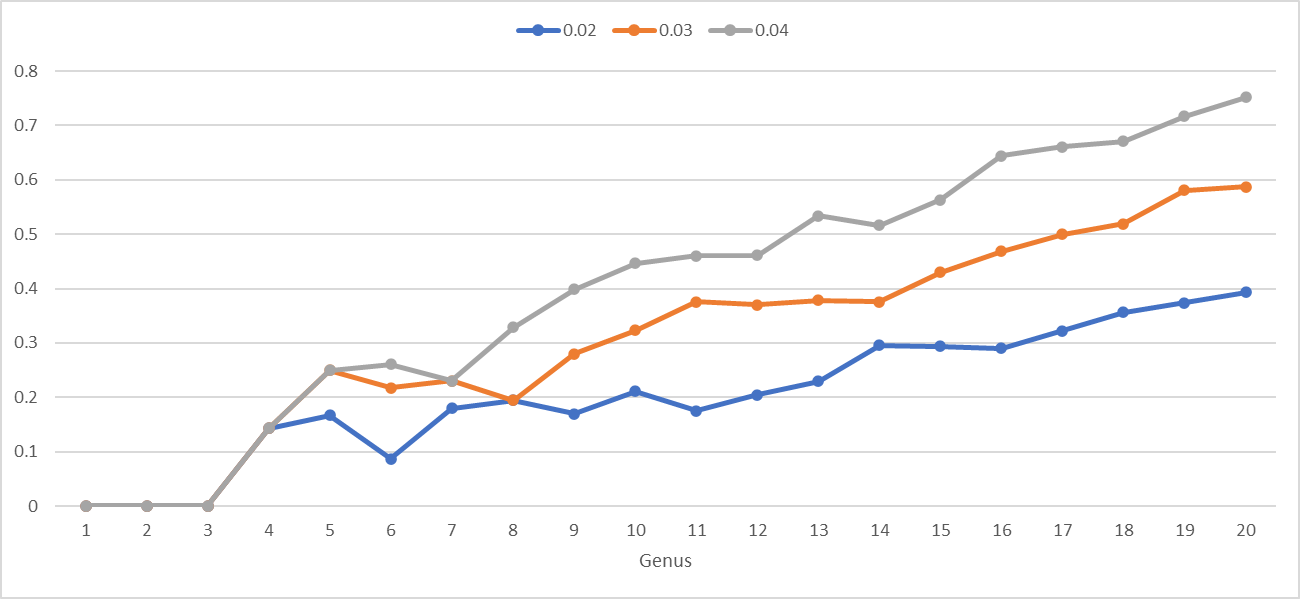}
    \caption{Proportion of $S \in \SS_g$ with $\left|w(S)-\frac{1}{10\phi} g^2 \right|<\epsilon g^2$. \newline
    Plotted for $\epsilon=0.02$, $\epsilon=0.03$ and $\epsilon=0.04$.}
    \label{fig:my_label}
\end{figure}

In order to prove Theorem \ref{main result}(1) we partition the minimal generating set $\A(S)$ into two parts.
For every numerical semigroup $S$ we have
\[
[m(S),2m(S)-1]\cap S\subseteq \A(S).
\]
Let $e_1(S)=\#\left([m(S),2m(S)-1]\cap S\right)$ and $e_2(S)=e(S)-e_1(S)$.
We separately estimate $e_1(S)$ and $e_2(S)$ for a typical numerical semigroup in $\SS_g$. Combining these estimates proves Theorem \ref{main result}(1).

\begin{proposition}\label{estimate e_1 e_2}
For fixed $\epsilon > 0$, we have
\begin{enumerate}
\item 
\[
\lim_{g\to\infty}\P_{g}\left[\left|e_1(S)-\frac{1}{\sqrt{5}} g \right|< \epsilon g\right]=1, \text{ and }
\]
\item\[
\lim_{g\to\infty}\P_{g}\left[e_2(S)< \epsilon g\right]=1.
\]
\end{enumerate}
\end{proposition}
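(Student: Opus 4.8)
The plan for part (1) is a direct gap count. Since $m(S)$ is the multiplicity, every integer in $[1,m(S)-1]$ is a gap, while every gap lies in $[1,F(S)]$; hence the number of gaps of $S$ lying in $[m(S),2m(S)-1]$ equals $g-(m(S)-1)$ minus the number of gaps lying in $[2m(S),F(S)]$, and the latter quantity is at most $\max(0,F(S)-2m(S)+1)$. Consequently
\[
e_1(S)=m(S)-\#\bigl(\Hs(S)\cap[m(S),2m(S)-1]\bigr)=2m(S)-g-1+\theta,\qquad 0\le \theta\le\max(0,F(S)-2m(S)+1).
\]
On the event $\{|m(S)-\gamma g|<\delta g\}\cap\{|F(S)-2m(S)|<\delta g\}$, which has probability tending to $1$ by Theorem \ref{Kaplan Mult}, one gets $|e_1(S)-(2m(S)-g)|\le \delta g+1$ together with $\bigl|(2m(S)-g)-\tfrac1{\sqrt5}g\bigr|<2\delta g$, using the identity $2\gamma-1=\tfrac1{\sqrt5}$. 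Choosing $\delta<\epsilon/4$ and letting $g\to\infty$ proves part (1); note that only Theorem \ref{Kaplan Mult} is needed here.

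For part (2) the starting observation is that if $a\in S$ is a minimal generator with $a\ge 2m(S)$ then $a-m(S)\notin S$ (otherwise $a=m(S)+(a-m(S))$ expresses $a$ as a sum of two positive elements), so $a-m(S)\in\Hs(S)$ and hence $a\le F(S)+m(S)$; moreover any $a\in S$ that lies in $(S\setminus\{0\})+(S\setminus\{0\})$ fails to be a minimal generator. Thus, writing $m=m(S)$, the minimal generators of $S$ that are $\ge 2m$ all lie in $[2m,F(S)+m]$ and avoid that sumset, so $e_2(S)$ is at most the number of integers of $[2m,F(S)+m]$ not lying in $T^\circ+T^\circ$, where $T^\circ:=S\cap[m+M+1,2m-1]$ for a parameter $M$ to be chosen.

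The plan is now a conditioning argument driven by Proposition \ref{F 2m}. Fix $\epsilon'>0$, take $M=M(\epsilon')$ as in that proposition, and fix a small absolute $\delta>0$. On the event $\mathcal E=\{|m-\gamma g|<\delta g\}\cap\{|F(S)-2m|\le M\}$, which has probability at least $1-\epsilon'-o(1)$, one conditions further on $m$, on $F(S)$, and on the ``bottom data'' $S\cap\bigl([m,m+M]\cup[2m,2m+M]\bigr)$. A key point is that, once these data are fixed and $F(S)\le 2m+M$, the resulting set $S$ is automatically closed under addition no matter which further elements of $[m+M+1,2m-1]$ it contains, because any sum $b+c$ with $b,c\in S\setminus\{0\}$ and $b+c\le F(S)$ forces $b,c\le m+M$. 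Hence the conditional law of $T^\circ$ is that of a uniformly random subset of $[m+M+1,2m-1]$ of a fixed size $\ell$, and on $\mathcal E$ the density $\ell/(m-M-1)$ lies within $O(\delta)$ of $1/\phi$, hence is bounded away from $0$ and $1$. Writing an element $x$ of the left half $[2m+2M+2,3m+M]$ of the sumset range as $x=2(m+M+1)+k$, it has about $k/2$ pairwise disjoint representations $x=j+j'$ with $j,j'\in[m+M+1,2m-1]$; by negative association of the events $\{y\in T^\circ\}$, the probability that none of these pairs lies in $T^\circ$ is at most $\rho^{k/2}$ for a constant $\rho<1$ (one may take $\rho=1-1/\phi^2+O(\delta)$). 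Summing this geometric series over $k$, and adding the $O(M)$ integers of $[2m,F(S)+m]$ that fall below the sumset range, the conditional expectation of $e_2(S)$ is $O_M(1)$; Markov's inequality then gives $\P_g[e_2(S)\ge\epsilon g\mid\mathcal E]\to0$, and combining with $\P_g[\mathcal E^c]\le\epsilon'+o(1)$ and letting $\epsilon'\to0$ finishes the proof.

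The hard part is this last step. The trivial injection $a\mapsto a-m(S)$ from minimal generators $\ge 2m(S)$ into gaps only yields $e_2(S)\le g-m(S)+1\approx(1-\gamma)g$, which is nowhere near $\epsilon g$, so one genuinely needs both the structural input of Proposition \ref{F 2m} — which confines $F(S)$, and hence the window in which these minimal generators can live, to an $O(1)$-neighborhood of $2m(S)$ — and the probabilistic input that, after conditioning, $S$ restricted to the bottom block behaves like a dense random set whose sumset covers all but $O(1)$ of $[2m(S),3m(S)]$. Making the negative-association estimate precise and handling the $O(M)$ ``bottom'' coordinates and the two cases $F(S)<2m(S)$ and $F(S)\ge 2m(S)$ is the bulk of the routine work.
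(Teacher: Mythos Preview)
Your argument for part~(1) is correct and essentially identical to the paper's: both rewrite $e_1(S)$ as $2m(S)-g-1$ plus an error bounded by $|F(S)-2m(S)|+1$, invoke the identity $2\gamma-1=1/\sqrt5$, and conclude via Theorem~\ref{Kaplan Mult}.

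For part~(2) your approach is genuinely different. The paper proceeds by explicit enumeration: using Zhao's bijections (Propositions~\ref{Zhao F<2m} and~\ref{Zhao F<3m}) it bounds, via inclusion--exclusion and a generating-function telescoping, the sums $\sum_{S\in\B(g)}e_2(S)\le 2F_{g+1}$ and $\sum_{S\in\cC(k,g)}e_2(S)\le 2F_{g+k}$, then combines these with Proposition~\ref{F 2m} to obtain $\tfrac1g\E_g[e_2]\to 0$, from which the claim follows by Markov (Lemma~\ref{Lemma Average to 0}). Your route is probabilistic: after conditioning on $m$, on $F$, and on the $O(M)$ boundary coordinates, the remaining block $T^\circ$ is a uniformly random subset of fixed size and density $\approx 1/\phi$, and a negative-association bound shows that $T^\circ+T^\circ$ misses only $O_M(1)$ integers of $[2m,F+m]$ in expectation. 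Both arguments ultimately show that the conditional expectation of $e_2$ on the good event is $O(1)$; the paper's is fully explicit and self-contained, while yours is more conceptual but leans on the negative-association machinery and leaves more to the reader.

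One small point to watch in your write-up: in the case $F(S)<2m(S)$, conditioning on the \emph{exact} value of $F$ forces $T^\circ\cap[F,2m-1]=[F+1,2m-1]$, so $T^\circ$ is not literally uniform on all of $[m+M+1,2m-1]$. The fix is routine, as you indicate: either condition only on the event $\{F<2m\}$ in that case (then $S\cap[m+1,2m-1]$ is uniform by Proposition~\ref{Zhao F<2m}), or run the sumset argument with the smaller, genuinely uniform set $T^\circ\cap[m+M+1,F-1]$, which on $\mathcal E$ differs from $T^\circ$ in at most $M$ coordinates and has the same density up to $O(M/m)$.
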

Similarly, in order to prove Theorem \ref{main result}(2) we partition $PF(S)$ into two parts.
For any numerical semigroup $S$ with Frobenius number $F$ and multiplicity $m$, we have
\[
\Hs(S)\cap [F-m+1,F]\subseteq PF(S).
\]
Let $t_1(S)=\#(\Hs(S)\cap [F-m+1,F])$ and $t_2(S)=t(S)-t_1(S)$.
We separately estimate $t_1(S)$ and $t_2(S)$ for a typical numerical semigroup in $\SS_g$. Combining these estimates proves Theorem \ref{main result}(2).

\begin{proposition}\label{estimate t_1 and t_2}
For fixed $\epsilon > 0$, we have
\begin{enumerate}
    \item \[
\lim_{g\to\infty}\P_{g}\left[\left|t_1(S)-(1-\gamma) g \right|< \epsilon g\right]=1, \text{ and }
\]
\item \[
\lim_{g\to\infty}\P_{g}\left[t_2(S)< \epsilon g\right]=1.
\]
\end{enumerate}
\end{proposition}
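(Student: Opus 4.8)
The plan is to prove the two parts by rather different methods: part~(1) is a bookkeeping argument, while part~(2) needs a combinatorial description of the small pseudo-Frobenius numbers together with a probabilistic estimate. For part~(1), I would rewrite $t_1(S)$ in terms of the genus. Since $\Hs(S)=[1,m-1]\cup(\Hs(S)\cap[m,F])$ and $\Hs(S)\cap[m,F]$ has exactly $g-(m-1)$ elements, while $[F-m+1,F]$ is an interval of length $m$ whose left endpoint $F-m+1$ lies within $|F-2m|+1$ of $m$, one gets $|t_1(S)-(g-m+1)|\le |F-2m|+1$. Theorem~\ref{Kaplan Mult}(1) gives $|m(S)-\gamma g|<\epsilon g$ with probability tending to $1$, and Proposition~\ref{F 2m} (or already Theorem~\ref{Kaplan Mult}(2)) lets me assume $|F-2m|$ is bounded on an event of probability $>1-\epsilon$; combining, $|t_1(S)-(1-\gamma)g|\le|\gamma g-m|+|F-2m|+1<2\epsilon g$ for large $g$ on an event of probability tending to $1$, which is~(1).

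For part~(2), the first step is a combinatorial characterization. Directly from the definition of $PF(S)$, and using that $P+s>F$ (hence $P+s\in S$) whenever $s\in S$ with $s>F-P$, one checks that for $1\le P\le F-m$ (so in particular $P<m$ is a gap whenever $F<2m$) one has $P\in PF(S)$ if and only if there is no $a\in[m,F-P]$ with $a\in S$ and $a+P\notin S$; equivalently, membership in $S$ is non-decreasing along every residue class modulo $P$ inside $[m,F]$, so the pattern on each such class is ``sorted'', of the form $0^a1^b$. Thus such a $P$ forces the progression $m,m+P,m+2P,\dots$ into $S$ and the progression $F,F-P,F-2P,\dots$ into $\Hs(S)$, and, crucially, a sorted pattern on \emph{every} one of the $P$ residue classes.

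The second step is to bound $\E_g[t_2(S)]$. I would restrict to the event $\{|F-2m|\le M\}$ (handling its complement by sending $M\to\infty$ after $g\to\infty$) and treat the main case $F<2m$, the boundedly many extra columns when $2m\le F<2m+M$ being handled the same way. Conditioning on $m(S)=m$ and $F(S)=F<2m$, the semigroup $S$ equals $\{0,m\}\cup B\cup[F+1,\infty)$ for an arbitrary $B\subseteq[m+1,F-1]$ of the prescribed size $F-1-g$, so conditionally the word recording membership in $S$ on $[m,F]$ is uniformly random of fixed weight with $m\in S$ and $F\notin S$ forced. For a fixed $P$, the sorted-on-every-class condition determines the word on each class from its weight alone, and comparing the resulting counts of bounded compositions with $\binom{F-m-1}{F-1-g}$ gives a bound $\P_g[P\in PF(S)\mid m(S)=m,F(S)=F]\le\prod_c f(\ell_c)$, where $c$ runs over the $P$ residue classes meeting $[m,F]$, $\ell_c$ is the length of class $c$, and $f(1)=1$ while $f(\ell)\le\theta<1$ is decreasing for $\ell\ge2$. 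Since the classes of an interval modulo $P$ all have length $\lfloor(F-m+1)/P\rfloor$ or $\lceil(F-m+1)/P\rceil$, this yields $\prod_c f(\ell_c)\le\theta^{\Theta(P)}$ when $P\le(F-m+1)/2$ and $\le f(2)^{(F-m+1)-P}$ when $(F-m+1)/2<P\le F-m$; in both ranges $\sum_P\P_g[P\in PF(S)\mid m,F]$ is a convergent geometric-type series bounded uniformly in $m,F$. Averaging over $m,F$ gives $\E_g\!\big[t_2(S)\,\mathbf 1_{\{|F-2m|\le M\}}\big]=O(1)$, and Markov's inequality together with $M\to\infty$ gives $\P_g[t_2(S)\ge\epsilon g]\to0$.

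The technical heart, and the main obstacle, is the uniform estimate $\sum_P\P_g[P\in PF(S)\mid m,F]=O(1)$: it genuinely requires the full ``sorted on every residue class'' structure, since using only the forced progression $m,m+P,\dots\subseteq S$ (or its dual in $\Hs(S)$) gives merely $\E_g[t_2(S)]=O(g)$, which is useless for arbitrary $\epsilon$. The delicate points will be justifying the passage from ``sorted on each class'' to the product bound in the true conditional distribution (which is hypergeometric rather than a product measure, so one must invoke negative association or an exact counting identity), and checking that the many short classes, which contribute only the harmless factor $f(1)=1$, do not destroy the geometric decay supplied by the length-$2$ classes that dominate near the top of the range $P\le F-m$.
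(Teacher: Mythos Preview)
Your argument for part~(1) is correct and essentially the same as the paper's.

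For part~(2), your overall strategy---bound $\E_g\big[t_2(S)\,\mathbf{1}_{\{|F-2m|\le M\}}\big]$ and invoke Markov---matches the paper's, and your ``sorted on every residue class mod $P$'' characterization of the small pseudo-Frobenius numbers is correct. The gap is exactly where you flag it: the product bound $\P_g[P\in PF(S)\mid m,F]\le\prod_c f(\ell_c)$ is not justified, and neither route you suggest closes it. Negative association does not apply, because ``class $c$ is sorted'' is not a monotone event (it forbids the pattern $10$ while allowing both $00$ and $11$). The exact count of sorted configurations is a single coefficient of $\prod_c(1+x+\cdots+x^{\ell_c})$, and this does not factor over classes once the global weight constraint $\sum_c k_c=k$ is imposed; so the ``exact counting identity'' route also requires real work you have not done. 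Without this bound your geometric-decay estimate and the uniform $O(1)$ conclusion are unsupported.

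The paper sidesteps this by abandoning the full residue-class structure in favor of \emph{disjoint pairs}. It does not condition on $F$; it works directly over $\B(g,m)$ (and analogously $\cC(m,k,g)$) via Zhao's parametrization $S_{m,B}$. For each candidate $j$ it uses only the constraints ``$(i,i+j)$ does not have pattern $(\in B,\notin B)$'' for $i$ in a range chosen so the pairs are pairwise disjoint: $i\in[1,m-1-j]$ when $j\ge\lceil m/2\rceil$, and $i\in[1,j-1]$ (together with the forced $j,2j\in B$) when $j<\lceil m/2\rceil$. Disjointness makes the inclusion--exclusion over bad pairs exact, and the resulting alternating sum is a coefficient of $(1+x)^{m-2}\bigl(\tfrac{x^2+x+1}{(1+x)^2}\bigr)^{\text{(number of pairs)}}$. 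Summing over $j$ is then a finite geometric series in $\tfrac{x^2+x+1}{(1+x)^2}$ which collapses to a coefficient of $(1+x)^m/x$, bounded above by $\binom{m}{2m-g-2}$; summing over $m$ gives $\sum_{S\in\B(g)}t_2(S)\le F_{g+4}$. The missing idea in your plan is that sacrificing tightness (using only disjoint pairs, hence only a necessary condition when $j$ is small) is what makes the count exactly computable and summable, whereas your finer residue-class decomposition introduces dependencies you cannot control with the tools you propose.
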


\begin{figure}
    \centering
    \includegraphics[height=0.2\textheight]{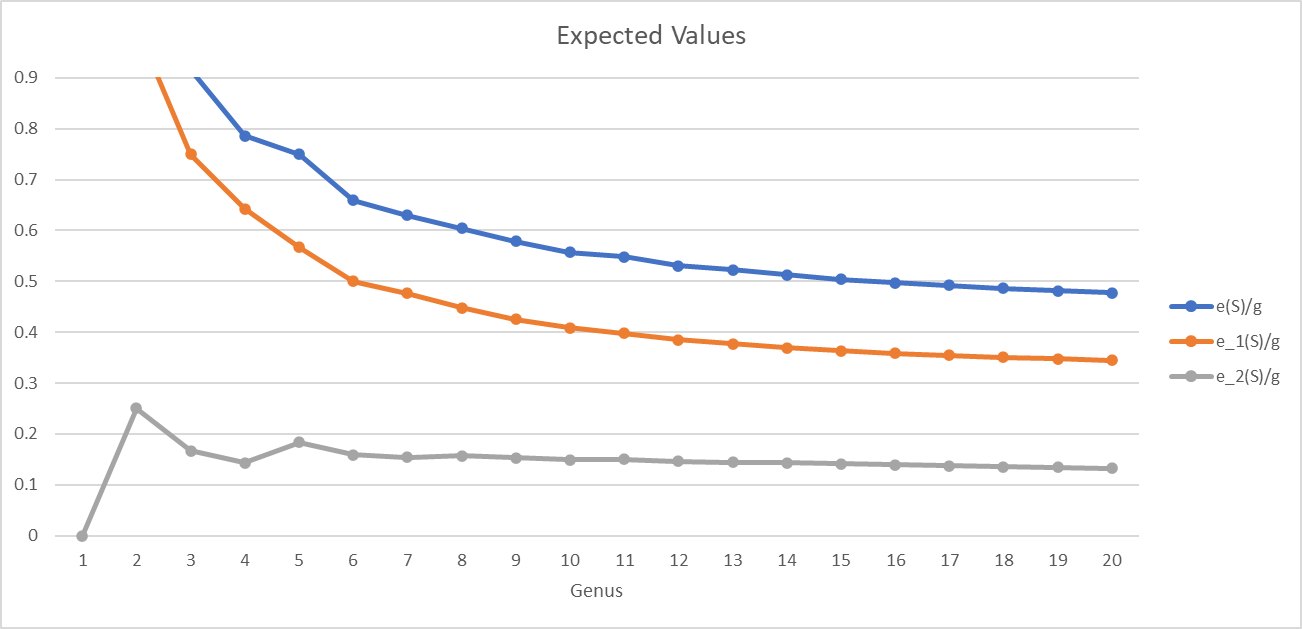}
    \caption{Expected values of $\frac{e(S)}{g}$, $\frac{e_1(S)}{g}$ and $\frac{e_2(S)}{g}$ taken over $S\in\SS_g$.}
    \label{fig:my_label}
\end{figure}

\begin{figure}
    \centering
    \includegraphics[height=0.2\textheight]{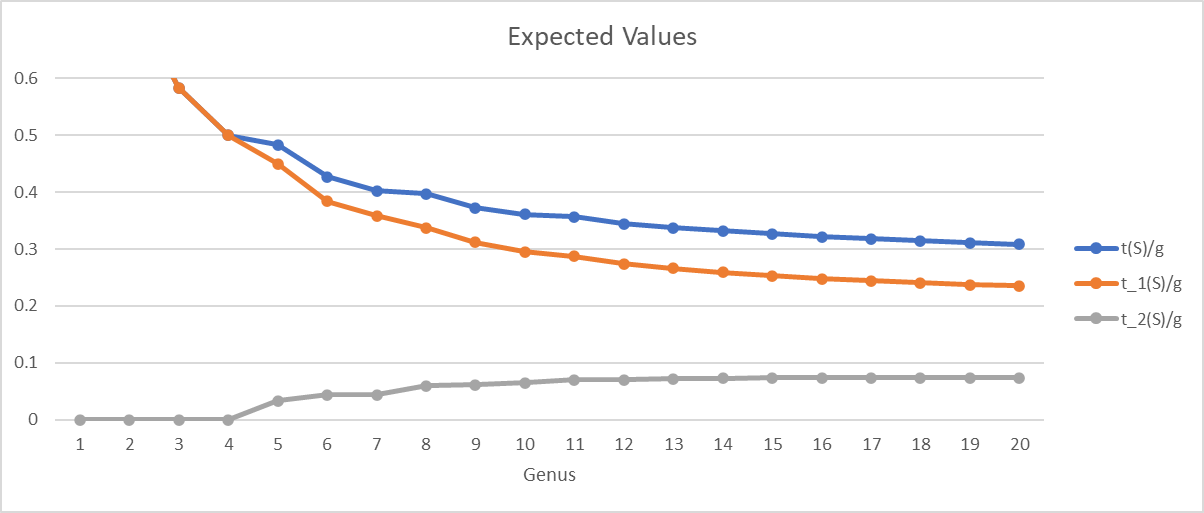}
    \caption{Expected values of $\frac{t(S)}{g}$, $\frac{t_1(S)}{g}$ and $\frac{t_2(S)}{g}$ taken over $S\in\SS_g$.}
    \label{fig:my_label}
\end{figure}

\subsection{Counting Numerical Semigroups with Large Invariants}

Among numerical semigroups in $\SS_g$, we have seen that most have 
\begin{itemize}
\item multiplicity close to $\gamma g$,
\item Frobenius number close to $2\gamma g$, 
\item embedding dimension close to $\frac{1}{\sqrt{5}}g$,  
\item type close to $(1-\gamma)g$, and
\item weight close to $\frac{1}{10\varphi} g^2$.
\end{itemize}
One could also ask about extreme values of these invariants, and to try to count numerical semigroups in $\SS_g$ with invariants close to these maximum or minimum values.  Basic properties of numerical semigroups imply that for $S\in\SS_g,\ m(S)\leq g+1,\  F(S)\leq 2g-1,\ e(S)\leq g+1$, and $t(S) \leq g$.

Numerical semigroups for which $F(S)=2g(S)-1$ are called \emph{symmetric}. Backelin has studied the problem of counting symmetric numerical semigroups \cite{Backelin}.
\begin{thm}\cite[Proposition 1]{Backelin}
For $i\in\{0,1,2\}$, the following limit exists and is positive:
\[
\lim_{\substack{g\to\infty\\ g\equiv i\pmod*{3}}}\frac{\#\{S\in\SS_g\mid F(S)=2g-1\}}{\sqrt[3]{2}^g}.
\]
\end{thm}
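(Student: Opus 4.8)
The plan is to recast the count as a combinatorial problem about subsets of $[1,g-1]$, extract a main term of size $2^{\lfloor g/3\rfloor}$ from a ``free window'', and show the rest contributes only a convergent correction. A symmetric numerical semigroup $S$ of genus $g$ has $F(S)=2g-1$ and satisfies $x\in S\iff 2g-1-x\notin S$ for $0<x<2g-1$, so $S$ is determined by $A:=S\cap[1,g-1]$ via $S\cap[g,2g-2]=\{2g-1-x : x\in[1,g-1]\setminus A\}$, together with $2g-1\notin S$ and $[2g,\infty)\subseteq S$. Writing out the requirement that $S$ be closed under addition, one checks that the sets $A$ which occur are exactly those satisfying (a) $a_1+a_2\in A$ whenever $a_1,a_2\in A$ and $a_1+a_2\le g-1$ --- equivalently, $T_A:=\{0\}\cup A\cup[g,\infty)$ is a numerical semigroup --- and (b) no $a_1,a_2,a_3\in A$, with repetition allowed, have $a_1+a_2+a_3=2g-1$. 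The only genuinely new requirement is (b): a sum of two elements of $[1,g-1]$ is at most $2g-2$, and the mixed closure relations involving $[1,g-1]\setminus A$ are forced by the symmetry. Thus $\#\{S\in\SS_g : F(S)=2g-1\}$ equals the number of such $A$.

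Next, let $L:=\lceil 2g/3\rceil$ and $W:=[L,g-1]$, so that $|W|=\lfloor g/3\rfloor$. Every subset of $W$ satisfies (a) and (b) vacuously, since a sum of two elements of $W$ exceeds $g-1$ and a sum of three is at least $3L\ge 2g>2g-1$. Hence $\#\{S\in\SS_g : F(S)=2g-1\}\ge 2^{\lfloor g/3\rfloor}$, and since $2^{\lfloor g/3\rfloor}/\sqrt[3]{2}^{\,g}=2^{\lfloor g/3\rfloor-g/3}$ depends only on $g\bmod 3$ and is positive, any limit of the ratio in the theorem is automatically positive. It remains to show the count is asymptotic to $D_i\cdot 2^{\lfloor g/3\rfloor}$ for a constant $D_i\ge 1$ depending only on $i:=g\bmod 3$.

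To do this I would stratify the valid $A$ by $\mu:=\min A$ (with $\mu=+\infty$ if $A=\emptyset$). The stratum $\mu\ge L$ is precisely the $2^{\lfloor g/3\rfloor}$ subsets of $W$. When $L-\mu$ is bounded by an absolute constant $K$: condition (a) imposes nothing new (pairwise sums of elements that are all $\ge L-K$ exceed $g-1$ once $g$ is large), and since $3L>2g-1$ any triple forbidden by (b) uses an element $\le L-1$; solving $a_1+a_2+a_3=2g-1$ for the remaining one or two summands then shows that the elements of $W$ excluded by a given low part $A\cap[1,L-1]$ all lie in an initial block $[L,L+O(K)]$ of $W$. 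Consequently, after subtracting $L$, the admissible low parts in this range form a family that stabilizes as $g\to\infty$, and each extends to $c\cdot 2^{\lfloor g/3\rfloor}$ valid $A$'s, where $c$ depends only on the shape of the low part and on $g\bmod 3$; one checks that the governing parities and strict inequalities (such as, for a low element $L-j$ with $1\le j\le K$, the parity of $2g-1-L+j$, and whether $3(L-1)=2g-1$) are functions of $g\bmod 3$ alone, so that these corrections sum to a constant. When $L-\mu>K$: here condition (a) forces $\mu,2\mu,3\mu,\dots$ (and more) into $A$, and then, to keep condition (b), the window part $A\cap W$ is confined to a sparse, essentially arithmetic-progression-like subset of $W$; I would show that the number of such $A$ is exponentially smaller than $2^{\lfloor g/3\rfloor}$, so this regime contributes $o(2^{\lfloor g/3\rfloor})$. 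Combining the three contributions gives $\#\{S\in\SS_g : F(S)=2g-1\}=D_i\cdot 2^{\lfloor g/3\rfloor}(1+o(1))$, so the limit exists and equals $D_i\cdot 2^{\lfloor g/3\rfloor-g/3}>0$.

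The main obstacle is the regime $L-\mu>K$ in the last step: one must prove that symmetric semigroups of genus $g$ whose smallest nonzero element lies well below $2g/3$ are genuinely rare, i.e.\ that the arithmetic structure forced by condition (a) is so incompatible with avoiding all three-term representations of $2g-1$ that it costs an exponential factor in window freedom relative to $2^{\lfloor g/3\rfloor}$. A crude bound on the number of low parts that can occur will not by itself suffice, so the argument needs a correlation between the sparsity of the forced window subset and the number of low parts realizing it. The second delicate point is the exact bookkeeping of the bounded-$\mu$ corrections, verifying that each depends only on $g\bmod 3$ (not on $g\bmod 6$), which is precisely what makes the limit exist along each residue class modulo $3$.
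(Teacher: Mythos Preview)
The paper does not prove this theorem at all: it is quoted verbatim as \cite[Proposition 1]{Backelin} in the introductory survey of known results, and no argument for it appears anywhere in the paper. So there is no ``paper's own proof'' to compare against; your proposal stands on its own.

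As a sketch, your setup is sound. The bijection between symmetric semigroups of genus $g$ and subsets $A\subseteq[1,g-1]$ satisfying your conditions (a) and (b) is correct, and the free window $W=[\lceil 2g/3\rceil,\,g-1]$ does give the lower bound $2^{\lfloor g/3\rfloor}$, hence positivity of any limit. But the proposal is, as you yourself flag, not a proof: the two hard steps are left as intentions. For the tail $\mu<L-K$ you assert that condition (a) forces enough structure on $A\cap[1,L-1]$ that the surviving freedom in $W$ drops by an exponential factor, but you give no mechanism for this; in particular, knowing only that $\mu,2\mu,\dots$ lie in $A$ does not by itself thin $W$ exponentially, so the argument must simultaneously control how many low parts there are and how severely each constrains $W$, and you have not indicated how to do either. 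For the bounded-$K$ strata you claim the combinatorics depends only on $g\bmod 3$, but the offsets you name (e.g.\ the parity of $2g-1-L+j$ with $L=\lceil 2g/3\rceil$) are not self-evidently functions of $g\bmod 3$ alone, and a small miscount here would produce a $g\bmod 6$ dependence instead. Backelin's original argument is organized differently and handles both issues; if you want to push your outline through, these are the two places that need real work.
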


Kaplan has studied the problem of counting $S\in\SS_g$ with $m(S)=g-k$ for fixed $k$ \cite{Kaplan}.
\begin{thm}\cite[Proposition 13]{Kaplan}\label{thm:count_mlarge}
For each $k\geq 0$, there is a monic polynomial $f_k(x) \in \Q[x]$ of degree $k+1$ such that for $g>3k$ we have
\[
\#\{S\in\SS_g\mid m(S)=g-k\}=\frac{1}{(k+1)!}f_k(g).
\]
\end{thm}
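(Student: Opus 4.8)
The plan is to recast the count as one over subsets of an interval, and then show that the resulting quantity is given by a polynomial in $g$ once $g>3k$.

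Write $m=g-k$. If $S\in\SS_g$ has $m(S)=m$, then $\{1,\dots,m-1\}\subseteq\Hs(S)$ and $m\in S$, so the remaining $g-(m-1)=k+1$ gaps of $S$ all lie in $[m+1,\infty)$; call this set $H$. Then $S=\{0\}\cup([m,\infty)\setminus H)$, so $S\mapsto H$ is a bijection from $\{S\in\SS_g:m(S)=g-k\}$ onto the family of $(k+1)$-element subsets $H\subseteq[m+1,\infty)$ for which $\{0\}\cup([m,\infty)\setminus H)$ is closed under addition. Since every gap of a numerical semigroup is at most $F(S)\le 2g-1$, such an $H$ in fact lies in $[m+1,2g-1]$. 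Unwinding closure — the only sums one must check are $a+b$ with $a,b\in S$ and $a,b\ge m$, and these lie in $[2m,\infty)$ — gives the criterion: $\{0\}\cup([m,\infty)\setminus H)$ is a numerical semigroup if and only if for every $h\in H$ with $h\ge 2m$ one has $[m,h-m]\subseteq H\cup(h-H)$, where $h-H=\{h-x:x\in H\}$. So the quantity we want is the number of $(k+1)$-subsets $H\subseteq[m+1,2g-1]$ satisfying this family of ``covering'' conditions.

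Next I would use $g>3k$, equivalently $m\ge 2k+1$. Split $H=H_1\sqcup H_2$ with $H_1=H\cap[m+1,2m-1]$ and $H_2=H\cap[2m,2g-1]\subseteq[2m,2m+2k-1]$; each $h\in H_2$ has the form $h=2m+d$ with $0\le d\le 2k-1$, so $[m,h-m]=[m,m+d]\subseteq[m,m+2k-1]$. The crucial point is that the covering conditions involve only $H_2$ together with the elements of $H_1$ in the window $[m+1,m+2k-1]$: an element of $H_1$ lying in $[m+2k,2m-1]$ occurs in no $[m,h-m]$ and equals no $h-x$, and is itself subject to no condition. The hypothesis $g>3k$ is precisely what makes $[m+2k,2m-1]$ a nonempty interval — of length $m-2k=g-3k$ — disjoint from that window. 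Recording the ``active'' data by offsets ($H_1\cap[m+1,m+2k-1]$ via the offset from $m$, and $H_2$ via the offset from $2m$), it ranges over finite sets independent of $g$, and the covering conditions become conditions on these offsets alone. Hence for each $p$ the number of admissible active configurations of size $(k+1)-p$ is a constant $c_{k,p}$, while the complementary set $H_1\cap[m+2k,2m-1]$ is an arbitrary $p$-subset of an interval of length $g-3k$, so for $g>3k$
\[
\#\{S\in\SS_g\mid m(S)=g-k\}=\sum_{p=0}^{k+1}c_{k,p}\binom{g-3k}{p}
\]
(for $k=0$ the active window is empty and this reads $g-1$). This is a polynomial in $g$; taking $p=k+1$ forces $H_2=\varnothing$, making every condition vacuous, so $c_{k,k+1}=1$ and the degree is exactly $k+1$, with leading term $\binom{g-3k}{k+1}=\tfrac{1}{(k+1)!}g^{k+1}+O(g^{k})$. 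Therefore $f_k(x):=(k+1)!\sum_{p}c_{k,p}\binom{x-3k}{p}$ is monic of degree $k+1$ and lies in $\Q[x]$ (being $(k+1)!$ times an integer-valued polynomial), which gives the statement.

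The main obstacle is the decoupling in the third paragraph: one must carefully verify that closure is equivalent to the covering conditions, and then that the elements of $H_1$ beyond $m+2k$ are simultaneously inert in and unconstrained by those conditions. This amounts to checking a short list of cases — the forbidden value $h=2m$, coverage of a position directly versus by the reflection $x\mapsto h-x$, and the separate roles of $H_1$ and $H_2$ — and confirming that the offset ranges stabilize exactly when $g>3k$. Once this is established, the polynomiality and the leading-term computation are routine.
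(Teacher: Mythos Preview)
Your argument is correct. The closure criterion you state is exactly right, and the key decoupling---that for $g>3k$ every element of $H_1$ in $[m+2k,2m-1]$ lies outside each interval $[m,h-m]$ and has reflection $h-y<m$, hence neither constrains nor assists any covering condition---checks out cleanly. The leading-term analysis via $c_{k,k+1}=1$ is also fine.

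As for comparison: the paper does not prove this theorem itself but cites it from \cite{Kaplan}. What the paper \emph{does} do, in Section~\ref{Sec: large ED}, is give a closely related parametrization via Kunz coordinate vectors: for $S$ with $m(S)=m=g-k$ one records $\KV_m(S)=(x_1,\dots,x_{m-1})\in\{1,2,3\}^{m-1}$, shows that only the initial segment $\overline{x}=(x_1,\dots,x_{2k+1})$ carries nontrivial constraints, and expresses the count as $\sum_{\overline{x}\in\Y(k)}\binom{g-3k-2}{k+1-a(\overline{x})-2b(\overline{x})}$, valid for $g\ge 4k+3$. Your gap-set encoding and the paper's Kunz encoding are equivalent in content (your $A$ records the $i$ with $x_i\ge 2$ in the active window, your $D$ the $i$ with $x_i=3$), but you track a shorter active window of length $2k-1$ because you only need multiplicity and genus, whereas the paper tracks length $2k+1$ to also control the embedding dimension. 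This is why you recover the sharp range $g>3k$ while the paper's reconstruction gives only $g\ge 4k+3$. Both routes produce the same polynomial $f_k$, yours as $(k+1)!\sum_p c_{k,p}\binom{x-3k}{p}$ and the paper's as $(k+1)!\sum_{\overline{x}\in\Y(k)}\binom{x-3k-2}{k+1-a(\overline{x})-2b(\overline{x})}$.
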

The following fact is not stated in \cite{Kaplan}, so we provide a proof here.
\begin{corollary}
The polynomials $f_k(x)$ have integer coefficients.
\end{corollary}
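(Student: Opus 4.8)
The plan is to deduce the claim from the classical characterization of integer-valued polynomials (P\'olya): a polynomial $q\in\Q[x]$ of degree $d$ that takes integer values at $d+1$ consecutive integers lies in the $\Z$-span of the binomial polynomials $\binom{x}{0},\binom{x}{1},\dots,\binom{x}{d}$; more precisely, for any base point $a\in\Z$ one has the identity $q(x)=\sum_{j=0}^{d}(\Delta^{j}q)(a)\binom{x-a}{j}$, and if $q(a),q(a+1),\dots,q(a+d)\in\Z$ then every coefficient $(\Delta^{j}q)(a)$ is an integer. I would apply this to $q(x)=p_k(x):=\tfrac{1}{(k+1)!}f_k(x)$, which has degree $k+1$ because $f_k$ is monic of degree $k+1$.

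First I would check the integrality hypothesis. By Theorem \ref{thm:count_mlarge}, for every integer $g>3k$ we have $p_k(g)=\#\{S\in\SS_g\mid m(S)=g-k\}\in\Z_{\ge 0}$. Taking the base point $a=3k+1$, the $k+2$ integers $a,a+1,\dots,a+(k+1)$ all exceed $3k$, so $p_k$ takes integer values at each of them. Since each forward difference $(\Delta^{j}p_k)(a)=\sum_{i=0}^{j}(-1)^{j-i}\binom{j}{i}p_k(a+i)$ is an integer linear combination of these values, we conclude $c_j:=(\Delta^{j}p_k)(a)\in\Z$ for $0\le j\le k+1$, and hence $p_k(x)=\sum_{j=0}^{k+1}c_j\binom{x-a}{j}$.

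Finally I would clear denominators: multiplying through by $(k+1)!$ gives
\[
f_k(x)=\sum_{j=0}^{k+1}c_j\,\frac{(k+1)!}{j!}\,(x-a)(x-a-1)\cdots(x-a-j+1).
\]
For each $j\le k+1$ the ratio $\tfrac{(k+1)!}{j!}$ is an integer and the displayed product of $j$ linear factors has integer coefficients, so every summand lies in $\Z[x]$; therefore $f_k(x)\in\Z[x]$, as desired. (Comparing leading coefficients of the $j=k+1$ term with the monic polynomial $f_k$ also forces $c_{k+1}=1$, though this is not needed for the conclusion.) There is essentially no obstacle in this argument; the only point one must not overlook is that Theorem \ref{thm:count_mlarge} supplies integrality of $p_k$ at \emph{enough} consecutive integers, and that the strength of that theorem — the count equals $(k+1)!$ times an integer-valued polynomial of degree exactly $k+1$ — is precisely what makes the denominators clear, whereas mere integer-valuedness would not suffice (as $\binom{x}{2}\notin\Z[x]$ shows).
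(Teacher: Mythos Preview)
Your proof is correct and is essentially the same as the paper's: both show that $p_k(x)=\tfrac{1}{(k+1)!}f_k(x)$ is integer-valued on $k+2$ consecutive integers starting at $3k+1$, express it in the binomial basis $\binom{x-a}{j}$ with integer coefficients, and then clear denominators using $\tfrac{(k+1)!}{j!}\in\Z$. The only cosmetic difference is that the paper first shifts the variable (defining $F_k(x)=p_k(x+3k+1)$) and produces the binomial coefficients via a recursion, whereas you invoke Newton's forward-difference formula directly with base point $a=3k+1$; these are the same computation.
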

\begin{proof}
Define polynomials $F_k(x)=\frac{1}{(k+1)!}f_k(x+3k+1)$.  Therefore $F_k(x)\in\Q[x]$ has degree $k+1$ and $F_k(n)\in \Z$ for all $n\in\N_0$. Fix $k$ and recursively define $a_i$ for $0\leq i\leq k+1$ as follows. Let $a_0=F_k(0)$ and $a_i=F_k(i)-\sum_{j=0}^{i-1}a_j\binom{i}{j}$. It is clear that each $a_i\in\Z$. Now $F_k(x)$ and $\sum_{i=0}^{k+1} a_i\binom{x}{i}$ are two polynomials of degree $k+1$, whose values match at $k+2$ points. It follows that
$$F_k(x)=\sum_{i=0}^{k+1} a_i\binom{x}{i},$$
and hence $(k+1)!F_k(x)\in \Z[x]$. Therefore, $f_k(x)\in \Z[x]$.
\end{proof}

Singhal has studied the problem of counting $S\in\SS_g$ with $t(S) = g-k$ for fixed $k$ \cite{Large type}.
\begin{thm}\cite[Theorem 1.7]{Large type}
For each $k\geq 0$ there is a positive integer $c_k$ such that for $g\geq 3k-1$, 
\[
\#\{S\in\SS_g\mid t(S)=g-k\}=c_k.
\]
\end{thm}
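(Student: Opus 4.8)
The plan is to show that for $g$ large relative to $k$, a numerical semigroup of genus $g$ and type $g-k$ is forced into a rigid shape governed by a bounded amount of combinatorial data not depending on $g$, and then to count the possibilities. Throughout, for $S$ with multiplicity $m=m(S)$ I use the Apéry set $\Ap(S,m)=\{0=w_0,w_1,\dots,w_{m-1}\}$, where $w_i$ is the least element of $S$ with $w_i\equiv i\pmod m$; write $w_i=i+c_im$ with $c_i\ge 1$. Two standard facts are that $g(S)=\sum_{i=1}^{m-1}(w_i-i)/m$, so
\[
g=(m-1)+\sum_{i=1}^{m-1}(c_i-1),
\]
and that $PF(S)=\{w_i-m\colon w_i\text{ is maximal in }\Ap(S,m)\text{ under }\leq_S\}$, where $x\leq_S y$ means $y-x\in S$; hence $t(S)$ is the number of $\leq_S$-maximal elements among $w_1,\dots,w_{m-1}$. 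Setting $j:=g-(m-1)=\sum_{i=1}^{m-1}(c_i-1)$ and $\nu:=\#\{i\colon w_i\text{ not maximal}\}$ gives $g-t(S)=j+\nu$ with $j,\nu\ge 0$. Thus $t(S)=g-k$ forces $j\le k$ (so $m\ge g+1-k$ is large once $g$ is) and $\nu=k-j\le k$; in particular $c_i=1$ for all but at most $k$ indices~$i$.

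Next I would show that for $g$ large every $c_i\le 2$, equivalently every gap of $S$ lies in $[m,2m)$. If some $c_{i_0}\ge 3$, a direct computation with $w_{i_0}-w_i=(i_0-i)+(c_{i_0}-1)m$ shows that this difference lies in $S$ for every $i$ in a sub-interval $J$ of $[1,m-1]$ with $|J|\ge \tfrac12 m - O(k)$, so $w_i\leq_S w_{i_0}$ and $w_i$ is not maximal for all such $i$; this makes $\nu$ far larger than $k$, contradicting the previous paragraph. Hence $c_i\in\{1,2\}$; put $I=\{i\colon c_i=2\}$, so $|I|=j\le k$. Conversely, for any finite $I\subseteq[1,m-1]$ the set
\[
S_{m,I}:=\{0\}\cup\bigl([m,\infty)\setminus(m+I)\bigr)
\]
is a numerical semigroup, since $a,b\ge m$ gives $a+b\ge 2m>m+\max I$; it has multiplicity $m$ and genus $(m-1)+|I|$. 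So for $g$ large,
\[
\{S\in\SS_g\colon t(S)=g-k\}=\bigl\{S_{m,I}\colon I\subseteq[1,m-1]\text{ finite},\ m=g+1-|I|,\ t(S_{m,I})=g-k\bigr\},
\]
and distinct $I$ yield distinct semigroups (they differ in multiplicity or in gap set).

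It remains to compute $t(S_{m,I})$ and count. With every $c\le 2$, the comparison $w_i\leq_S w_{i'}$ for $i'<i$ is impossible (it would require $c_{i'}-c_i\ge 2$), and for $i'>i$ it holds exactly when $i\notin I$, $i'\in I$, and $i'-i\notin I$. Hence
\[
\nu(I):=\#\{i\ge 1\colon i\notin I,\ \exists\, i'\in I,\ i'>i,\ i'-i\notin I\}
\]
depends only on $I$ (every such $i$ satisfies $i<\max I$), and $t(S_{m,I})=g-\kappa(I)$ with $\kappa(I):=|I|+\nu(I)$; so $t(S_{m,I})=g-k\iff\kappa(I)=k$. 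Since the $i$ counted by $\nu(I)$ lie in $[1,\max I-1]\setminus I$, we get $\nu(I)\le \max I-|I|$, and taking $i'=\max I$ gives $\nu(I)\ge \max I-2|I|$; the latter forces $\max I\le k+|I|\le 2k$, so every relevant $I$ lies in $[1,2k]$. Therefore $\mathcal{I}_k:=\{I\subseteq[1,2k]\colon\kappa(I)=k\}$ is finite and independent of $g$; put $c_k=|\mathcal{I}_k|$. It is positive because $I=[1,k]$ (or $I=\emptyset$ when $k=0$) has $\nu(I)=0$ and $\kappa(I)=k$. Consequently $\#\{S\in\SS_g\colon t(S)=g-k\}=c_k$ for all sufficiently large $g$, and tracking how large $m$ (hence $g$) must be for the estimates above to apply is what produces the stated range $g\ge 3k-1$.

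The main obstacle is the second paragraph: turning "$c_{i_0}\ge 3$'' (equivalently, "some gap is $\ge 2m$'') into "$\nu$ is much larger than $k$'' requires controlling how the at most $k$ indices with $c_i\ge 2$ can block the needed $\leq_S$-comparisons, and extracting the sharp threshold $g\ge 3k-1$ rather than a cruder explicit bound takes careful bookkeeping; a looser analysis still proves the theorem for all $g$ beyond some explicit function of~$k$.
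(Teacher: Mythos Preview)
The paper does not contain a proof of this statement: it is quoted verbatim from \cite{Large type} as background in the introduction, with no argument given. So there is nothing in the present paper to compare your proposal against.

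That said, your outline is essentially correct and follows the standard Ap\'ery/Kunz approach one would expect. The identities $g=(m-1)+\sum_i(c_i-1)$ and $t(S)=(m-1)-\nu$ are right, and your reduction $j+\nu=k$ with $j=g-(m-1)$ is the key structural observation. The computation of $t(S_{m,I})$ via the partial order on the Ap\'ery set is accurate, and the bound $\max I\le 2k$ (via $\nu(I)\ge \max I-2|I|$, obtained by testing against $i'=\max I$) correctly shows that the relevant $I$ live in $[1,2k]$ independently of $g$. The example $I=[1,k]$ does give $\kappa(I)=k$, so $c_k>0$.

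Two points deserve tightening. First, the ``$c_{i_0}\ge 3$ forces $\nu$ large'' step: your sketch is right in spirit, and a clean version is to note that among the at least $m-1-j$ indices $i$ with $c_i=1$, the difference $w_{i_0}-w_i$ fails to lie in $S$ only when the residue class it hits has $c$-value at least $c_{i_0}-1$ (for $i<i_0$) or at least $c_{i_0}-2$ (for $i>i_0$); since $\sum(c_r-1)=j\le k$ there are at most $O(k)$ such residues, giving $\nu\ge m-1-O(k)$ and hence a contradiction once $g$ exceeds a fixed multiple of $k$. Second, as you yourself flag, this analysis yields a threshold of the form $g\ge Ck$ for some explicit $C>3$, not the sharp $g\ge 3k-1$ stated in the theorem. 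Getting the exact threshold requires the more careful case analysis carried out in \cite{Large type}; your argument proves the qualitative statement (eventual constancy with positive limit $c_k$) but not the precise range of validity.
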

We prove an analogous result for numerical semigroups in $\SS_g$ with embedding dimension close to $g$.
\begin{thm}\label{large embedding dimension}
For each $k\geq -1$, there is a polynomial $h_k(t)\in\Q[t]$ of degree $\lceil\frac{k}{2}\rceil$ such that for all $g\geq \frac{9k+7}{2}$ we have
\[
\#\{S\in\SS_g\mid e(S)=g-k\}=h_k(g).
\]
Moreover $\lceil\frac{k}{2}\rceil!h_k(t)$ is a monic polynomial with integer coefficients.
\end{thm}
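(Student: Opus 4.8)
The plan is to describe a numerical semigroup $S \in \SS_g$ with $e(S) = g-k$ in terms of a small amount of combinatorial data, then count the resulting configurations by a polynomial in $g$. The starting point is the observation that large embedding dimension forces the semigroup to be ``thin'' near the bottom: if $e(S) = g-k$, then since $\A(S) \supseteq S \cap [m, 2m-1]$ and every element of $S$ that is not a minimal generator is a sum of two positive elements of $S$, one checks that the non-generators of $S$ lying below $F(S)$ are tightly controlled. More precisely, I would first prove that $e(S) = g-k$ forces $m(S) \ge g - O(k)$ (an inequality of the shape $m(S) \ge g - \lfloor 3k/2 \rfloor$ or similar), so that $S$ has very few gaps below its multiplicity and is an ``almost-ordinary'' semigroup. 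Semigroups with $m(S) = g+1$ are exactly the ordinary ones $\{0\} \cup [g+1,\infty)$, which have $e = g+1$, i.e. $k = -1$; this is the base case and explains why the statement starts at $k = -1$.

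Next I would set $m = m(S)$ and write $g - m = j$, so $j$ ranges over $0 \le j \le \lfloor 3k/2 \rfloor$ (roughly). For each fixed $j$, the gaps of $S$ in $[0,m-1]$ form a set of size $j$, and the structure of $S$ above $m$ is constrained by which of the ``first-level'' elements $[m,2m-1]$ lie in $S$ and which sums of two generators fill in $[2m, \ldots]$. I would parametrize such $S$ by (a) the gap set below $m$, and (b) a finite ``profile'' describing the interaction near $2m$ and near $F(S)$; the key point, following the style of Theorem \ref{thm:count_mlarge} and the $t(S) = g-k$ result, is that once $g$ is large enough (here $g \ge (9k+7)/2$) this profile stabilizes and the only remaining freedom is the ``length'' of a run, contributing a polynomial factor in $g$. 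Summing over the finitely many profiles for each $j$, and over the $O(k)$ values of $j$, yields $\#\{S \in \SS_g : e(S) = g-k\} = h_k(g)$ for a polynomial $h_k \in \Q[t]$. The degree count $\deg h_k = \lceil k/2 \rceil$ should come from identifying which value of $j$ (equivalently, which configuration) admits the longest free run: each unit of ``slack'' in $g$ can be absorbed in essentially $\lfloor (\text{something})/2 \rfloor$ independent ways, and the extremal configuration contributes the top-degree term $\frac{1}{\lceil k/2\rceil!} t^{\lceil k/2\rceil}$.

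For the integrality and monicity claim, I would argue as in the Corollary following Theorem \ref{thm:count_mlarge}: $h_k(g) \in \Z$ for all large integers $g$, and $h_k$ has degree $d = \lceil k/2 \rceil$, so writing $h_k(t) = \sum_{i=0}^{d} a_i \binom{t}{i}$ with $a_i \in \Z$ (by the same finite-difference recursion) shows $d!\, h_k(t) \in \Z[t]$; monicity of $d!\, h_k$ then follows from $a_d = d! \cdot (\text{leading coefficient of } h_k) = 1$, which is exactly the statement that the extremal configuration contributes a single free run of length growing like $t^d/d!$.

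The main obstacle I anticipate is step two: showing that for $g \ge (9k+7)/2$ the ``profile'' of $S$ near $2m$ and near $F(S)$ genuinely stabilizes, i.e. that no new sporadic configurations appear for large $g$ and that the counting function is honestly polynomial rather than merely quasi-polynomial. This requires a careful case analysis of how elements of $S$ in $[2m, F(S)]$ can fail to be minimal generators, and pinning down the precise threshold $g \ge (9k+7)/2$ will be the delicate bookkeeping — one must verify that below a single long ``saturated'' run the configuration is forced, and that the bound $\lfloor 3k/2 \rfloor$ on $j = g-m$ together with the Frobenius constraint $F(S) \le 2g-1$ leaves no room for a second independent growing parameter (which would push the degree above $\lceil k/2 \rceil$). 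Establishing the exact degree $\lceil k/2\rceil$, as opposed to just an upper bound, is the part where I expect the $\lceil \cdot \rceil$ and the parity of $k$ to enter, and where the extremal configuration must be exhibited explicitly.
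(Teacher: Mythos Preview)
Your high-level strategy---bound $g-m(S)$ by $O(k)$, extract a finite ``profile'' whose size depends only on $k$, and leave a single free parameter growing with $g$---is exactly the paper's plan. But there is a concrete slip and a missing key tool. The slip: with $m=m(S)$ the multiplicity, \emph{every} element of $[1,m-1]$ is a gap, so ``the gaps of $S$ in $[0,m-1]$ form a set of size $j$'' is false; what has size $g-m+1$ is the set of gaps \emph{above} $m$. Your range for $j=g-m$ is also off: the paper shows that $e(S)=g-k$ forces $\lceil(k-1)/2\rceil\le g-m\le k$, not $0\le j\le \lfloor 3k/2\rfloor$.

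The missing idea is the parametrization that makes ``profile'' precise. The paper uses the Kunz coordinate vector $(x_1,\dots,x_{m-1})$ of $S$ with respect to $m$: when $g=m+k_1$ with $m\ge 2k_1+2$, one proves $x_i\in\{1,2,3\}$ for all $i$, that $x_i\in\{1,2\}$ for $i\ge 2k_1+2$, and---crucially---that $e(S)$ is determined by the first $2k_1+1$ coordinates alone via $e(S)=g-2k_1-1+a(\overline{x})+b(\overline{x})-c(\overline{x})$, where $a,b,c$ count the $2$'s, the $3$'s, and the ``non-minimal $2$'s'' in $\overline{x}=(x_1,\dots,x_{2k_1+1})$. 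The remaining $g-3k_1-2$ coordinates are free elements of $\{1,2\}$ with a prescribed number of $2$'s, yielding the binomial $\binom{g-3k_1-2}{k_1+1-a-2b}$; this is exactly the ``stabilization'' you anticipated but could not pin down. Degree and monicity then follow by locating the \emph{unique} $\overline{x}$ maximizing $k_1+1-a-2b$ subject to $a+b-c=2k_1+1-k$: it is $(1,\dots,1)$ when $k$ is odd and $(2,1,\dots,1)$ when $k$ is even, giving top degree $\lceil k/2\rceil$ with leading coefficient $1/\lceil k/2\rceil!$. Your finite-difference integrality argument is correct but unnecessary once $h_k$ is visibly a $\Z$-combination of binomials $\binom{t-3k_1-2}{\cdot}$.
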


\subsection{Outline of the Paper}
In Section \ref{sec:F3m} we review results of Zhao that characterize numerical semigroups in $\SS_g$ with $F(S)< 3m(S)$.  In Section \ref{sec:randomvariables} we prove several results about random variables on $\SS_g$ and show how to deduce Corollary \ref{main_cor} from Theorem \ref{main result}.  In Section \ref{sec:embedding} we prove Proposition \ref{estimate e_1 e_2} and in Section \ref{sec:type} we prove Proposition \ref{estimate t_1 and t_2}.  In Section \ref{sec:ind_prob} we prove a result about the probability that a subset of elements is contained in a random element of $\SS_g$.  We use this result in Section \ref{Sec: weight} to prove Theorem \ref{Weight result}. In Section \ref{Sec: large ED} we prove Theorem~\ref{large embedding dimension}.

\section{Numerical semigroups with $F(S)<3m(S)$}\label{sec:F3m} 
A major step in Zhai's proof of Theorem \ref{thm_zhai} is to prove a conjecture of Zhao \cite[Conjecture 4.1]{Zhao}, which states that 
\[
\lim_{g\to\infty}\P_g[F(S)<3m(S)]=1.
\]
We define the following two subsets of $\SS_g$:
\begin{eqnarray*}
\B(g) & = & \{S\in\SS_g \mid F(S)<2m(S)\},\\
\cC(g)& = & \{S\in\SS_g\mid 2m(S)<F(S)<3m(S)\}.
\end{eqnarray*}
We further divide up the elements of $\B(g)$ by multiplicity.  Let
\[
\B(g,m)=\{S\in\SS_g\mid m(S)=m,F(S)<2m\}.
\]
Throughout this paper when describing a numerical semigroup by listing its elements, we use the symbol $\rightarrow$ to indicate that it contains all larger elements.  For example, the numerical semigroup of genus $g$ containing all positive integers larger than $g$ is $S = \{0,g+1 \rightarrow\}$.
\begin{proposition}\cite[Corollary 2.2]{Zhao}\label{Zhao F<2m}
Numerical semigroups in $\B(g,m)$ are in bijection with subsets $B\subseteq \{1,2,\dots,m-1\}$ of size $2m-g-2$. The bijection is as follows.  Given such a subset $B$, let
\[
S_{m,B}=(m+B)\cup\{0,m,2m\rightarrow\} \in \B(g,m).
\]
\end{proposition}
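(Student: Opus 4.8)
The plan is to exhibit an explicit inverse to the assignment $B \mapsto S_{m,B}$ and verify that both composites are the identity. Given a numerical semigroup $S \in \B(g,m)$, I would define $B(S) = \left(S \cap \{m+1, \dots, 2m-1\}\right) - m$, which is a subset of $\{1, 2, \dots, m-1\}$, and argue that $S \mapsto B(S)$ and $B \mapsto S_{m,B}$ are mutually inverse.

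First I would record the structural consequences of the two defining conditions on $\B(g,m)$. Since $m(S) = m$, we have $0, m \in S$ while $1, 2, \dots, m-1$ are all gaps; since $F(S) < 2m$, every integer $\geq 2m$ lies in $S$. Hence every $S \in \B(g,m)$ decomposes as $S = \{0, m\} \cup \left(S \cap \{m+1, \dots, 2m-1\}\right) \cup \{2m \to\}$, so the only freedom is which elements of $\{m+1, \dots, 2m-1\}$ belong to $S$, which is exactly the data encoded by $B(S)$. Because $B \subseteq \{1, \dots, m-1\}$ forces $m + B \subseteq \{m+1, \dots, 2m-1\}$ and $m \notin m+B$, this decomposition gives $S = S_{m, B(S)}$ directly, showing that $S \mapsto B(S)$ is a left inverse of $B \mapsto S_{m,B}$.

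The substantive check is that $S_{m,B}$ is genuinely a numerical semigroup lying in $\B(g,m)$ for every $B \subseteq \{1, \dots, m-1\}$, so that the forward map is well defined. The key observation, which makes closure under addition automatic rather than a case analysis, is that every nonzero element of $S_{m,B}$ is at least $m$: the elements of $m + B$ lie in $(m, 2m)$, and the remaining nonzero elements are $m$ itself or are $\geq 2m$. Therefore the sum of any two nonzero elements is $\geq 2m$ and lands in $\{2m \to\} \subseteq S_{m,B}$, so $S_{m,B}$ is closed under addition; it contains $0$ and has finite complement, hence is a numerical semigroup. Reading off the invariants is then immediate: the smallest nonzero element is $m$, so $m(S_{m,B}) = m$, and since all integers $\geq 2m$ are present the largest gap is at most $2m-1$, giving $F(S_{m,B}) < 2m$.

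Finally I would pin down the genus to match the size constraint. The gaps of $S_{m,B}$ are precisely $\{1, \dots, m-1\}$ together with $\{m+1, \dots, 2m-1\} \setminus (m+B)$, so $g(S_{m,B}) = (m-1) + \left((m-1) - |B|\right) = 2m - 2 - |B|$. Requiring genus $g$ forces $|B| = 2m - g - 2$, exactly the prescribed cardinality; conversely the same computation shows $B(S)$ has size $2m - g - 2$ whenever $S$ has genus $g$. Combined with the left-inverse property above, this shows the two maps restrict to mutually inverse bijections between $\B(g,m)$ and the size-$(2m-g-2)$ subsets of $\{1, \dots, m-1\}$. There is no real obstacle in this argument: it rests entirely on the single observation that $F(S) < 2m$ makes the semigroup closure condition vacuous, after which the remaining steps are routine bookkeeping.
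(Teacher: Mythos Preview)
Your argument is correct. The paper does not actually supply its own proof of this proposition; it is quoted from \cite[Corollary 2.2]{Zhao} and simply used as input for the later counting arguments. What you have written is exactly the direct verification one would give: the condition $F(S)<2m$ forces $\{2m\rightarrow\}\subseteq S$, so any two nonzero elements sum into that tail, making the closure condition automatic, and the genus count $(m-1)+(m-1-|B|)=2m-2-|B|$ pins down $|B|$. One small phrasing point: you write that $S_{m,B}\in\B(g,m)$ ``for every $B\subseteq\{1,\dots,m-1\}$,'' but of course the genus is $g$ only under the size constraint $|B|=2m-g-2$; you handle this correctly in the final paragraph, so it is just a matter of wording.
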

Note that $\B(g,m)\neq\emptyset$ if and only if $\frac{g}{2}+1\leq m\leq g+1$.
We further divide up the elements of $\cC(g)$, first by $F(S)-2m(S)$ and then by multiplicity.  For a fixed positive integer $k$, we define the following two sets: 
\begin{eqnarray*}
\cC(k,g) & = & \{S\in\cC(g)\mid F(S)=2m(S)+k\},\\
\cC(m,k,g) & = & \{S\in\cC(k,g)\mid m(S)=m\}.
\end{eqnarray*}
Zhao counts numerical semigroups with $2m(S) < F(S) < 3m(S)$ by dividing them up by \emph{type} \cite[Section 3.1]{Zhao}. (Note that this use of type is unrelated to how we have used it earlier in the paper.) Let
\[
A_k=\{A\subseteq [0,k-1]\mid 0\in A, k\notin A+A\}.
\]
For $A \in A_k$, we define the following two sets:
\begin{eqnarray*}
\cC(k,A,g) & = & \{S\in \cC(k,g)\mid m(S)+A=S\cap [m(S),m(S)+k]\},\\
\cC(m,k,A,g)& = &\{S\in\cC(k,A,g)\mid m(S)=m\}.
\end{eqnarray*}
\begin{proposition}\cite[Proposition 3.3, Corollary 3.4]{Zhao}\label{Zhao F<3m}
For $g\geq 3k$, numerical semigroups in $\cC(m,k,A,g)$ are in bijection with subsets $B\subseteq [m+k+1,2m+k-1]\setminus (2m+A+A)$ of size $2m-g+k-|A|-|(A+A)\cap [0,k]|$.
The bijection is as follows.  Given such a subset $B$, let
\[
S_{m,A,B}=\{0\}\cup (m+A) \cup \big(2m+((A+A)\cap [0,k])\big) \cup B \cup \{2m+k+1\rightarrow\}\in\cC(m,k,A,g).
\]
\end{proposition}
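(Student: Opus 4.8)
The plan is to identify, for fixed $A\in A_k$ and fixed multiplicity $m$, exactly which positions in $[0,F]$ are \emph{forced} to lie in $S$, which are forced to be gaps, and which are \emph{free}; the subset $B$ will record the free positions that a given $S$ chooses to include. Throughout write $F=2m+k$. First I would record the forced structure of any $S\in\cC(m,k,A,g)$. Since $m$ is the multiplicity, $[1,m-1]\subseteq\Hs(S)$ and $m\in S$ (note $0\in A$); since $F$ is the Frobenius number, $\{2m+k+1\ra\}\subseteq S$; and by the definition of $\cC(m,k,A,g)$ we have $S\cap[m,m+k]=m+A$. Closure under addition forces $2m+\big((A+A)\cap[0,k]\big)\subseteq S$, because any $a,a'\in A$ give $(m+a)+(m+a')=2m+(a+a')\in S$. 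Because $k\notin A+A$, none of these forced sums equals $F=2m+k$, so $F$ genuinely remains a gap. Note that $F=2m+k<3m$ is part of the definition of $\cC(g)$, so $m>k$ and the three intervals $[m,m+k]$, $[m+k+1,2m-1]$, and $[2m,2m+k]$ are disjoint; the hypothesis $g\ge 3k$ ensures this interval structure is valid across the relevant range of $m$ and that the size of $B$ computed below is nonnegative.

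Next I would show the remaining positions are free. For $x\in[m+k+1,2m+k-1]$ with $x\notin 2m+(A+A)$, I claim neither including nor excluding $x$ imposes a constraint inside $[0,F]$. Including $x$ forces only sums $x+s$ with $s\in S\setminus\{0\}$, and $x+s\ge x+m\ge 2m+k+1>F$, so every such sum already lies in the tail $\{F+1\ra\}$; thus $x$ imposes nothing new. Excluding $x$ is consistent unless $x$ is a forced sum: writing $x=a+b$ with $a,b\ge m$ forces $a,b\le m+k-1$, hence $a,b\in m+A$ and $x\in 2m+(A+A)$, which we excluded. This identifies the free region as precisely $R:=[m+k+1,2m+k-1]\setminus(2m+A+A)$. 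Defining $B=S\cap R$ then gives a well-defined map $S\mapsto B$, and collecting the pieces shows $S=S_{m,A,B}$, so the map is injective.

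For the reverse direction I would take an arbitrary $B\subseteq R$ and verify that $S_{m,A,B}$ is a numerical semigroup lying in $\cC(m,k,A,g)$. The key point is closure: given nonzero $x,y\in S_{m,A,B}$, both are $\ge m$, so $x+y\ge 2m$; if $x+y>F$ it lies in the tail, and otherwise $x,y\le m+k$ forces $x,y\in m+A$ (since $B\subseteq[m+k+1,2m+k-1]$), whence $x+y=2m+(a+a')$ with $a+a'\in(A+A)\cap[0,k]$. As $k\notin A+A$ we get $a+a'\le k-1$, so $x+y\in 2m+\big((A+A)\cap[0,k]\big)\subseteq S_{m,A,B}$, and closure holds. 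The same analysis shows $F=2m+k\notin S_{m,A,B}$, that $S_{m,A,B}\cap[m,m+k]=m+A$, and that $m(S_{m,A,B})=m$, so $S_{m,A,B}\in\cC(m,k,A,g)$; this is evidently inverse to $S\mapsto B$.

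Finally I would pin down the cardinality of $B$ by counting gaps. Counting the elements of $S_{m,A,B}$ in $[0,F]$ gives $1+|A|+|(A+A)\cap[0,k]|+|B|$, so the genus is $g=(F+1)-|S\cap[0,F]|=2m+k-|A|-|(A+A)\cap[0,k]|-|B|$. Solving for $|B|$ yields exactly $2m-g+k-|A|-|(A+A)\cap[0,k]|$, the asserted size, and shows that requiring genus $g$ is equivalent to prescribing $|B|$. I expect the main obstacle to be the bookkeeping in the reverse direction: one must confirm that no sum of two nonzero elements of $S_{m,A,B}$ lands on $F$ or on a position of $R$ left out of $B$, and this is exactly where the defining condition $k\notin A+A$ and the inequality $m>k$ are essential.
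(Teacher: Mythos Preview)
The paper does not prove this proposition; it is quoted from Zhao \cite[Proposition 3.3, Corollary 3.4]{Zhao} without proof. Your argument is a correct, self-contained verification of the bijection and follows the natural route: identify the forced elements ($\{0\}\cup(m+A)\cup(2m+((A+A)\cap[0,k]))\cup\{F+1\to\}$) and forced gaps, show that the remaining positions $R=[m+k+1,2m+k-1]\setminus(2m+A+A)$ are genuinely free (any sum involving an element of $R$ exceeds $F$; any sum landing in $R$ would force the summands into $m+A$), verify closure in the reverse direction, and compute the genus by counting $|S\cap[0,F]|$.

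One minor remark: your invocation of the hypothesis $g\ge 3k$ is vague and not quite accurate. For the bijection itself with $m,k,A$ fixed, the only structural requirement is $m>k$, which you correctly derive from $F<3m$; if the prescribed size $2m-g+k-|A|-|(A+A)\cap[0,k]|$ is negative or exceeds $|R|$, both sides of the bijection are simply empty. The condition $g\ge 3k$ enters in Zhao's work when one sums over $m$ and $A$ to obtain enumeration formulas, not for the validity of the bijection per se.
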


\section{Random Variables on $\SS_g$}\label{sec:randomvariables}

In this section we prove several results about nonnegative random variables on $\SS_g$ and show how to deduce Corollary \ref{main_cor} from Theorem \ref{main result}.

\begin{lemma}\label{Lemma Average to 0}
Suppose we have a sequence of nonnegative random variables $X_g$ on $\SS_g$ such that
\[
\lim_{g\to\infty}\frac{1}{g^n}\E_g[X_g]=0.
\]
Then for every $\epsilon>0$
\[
\lim_{g\rightarrow \infty} \P_g[X_g(S)<\epsilon g^n]=1.
\]
\end{lemma}
\begin{proof}
Assume for the sake of contradiction that
$$\liminf_{g\rightarrow \infty} \P_g[X_g(S)<\epsilon g^n]<1.$$
Pick $0<\delta<1$ such that
$$\liminf_{g\rightarrow \infty}\P_g[X_g(S)<\epsilon g^n]  <1-\delta.$$
This implies that we have a sequence $g_i$, such that $\lim_{i\to\infty}g_i=\infty$ and for all $i$ we have
\[
\P_{g_i}[X_{g_i}(S)\geq\epsilon g_i^n]\geq  \delta.
\]
Therefore, we see that for all $i$, we have
\[
\E_{g_i}[X_{g_i}]\geq \delta \epsilon g_i^n.
\]
This contradicts the fact that
\[
\lim_{g\to\infty}\frac{1}{g^n}\E_g[X_g]=0.\qedhere
\]
\end{proof}

\begin{lemma}\label{Lemma Average}
Let $X_g$ be a sequence of nonnegative random variables on $\SS_g$. Suppose that there is a positive integer $n$ and constant $M$ such that for every $g$ and every $S\in\SS_g$, we have $X_g(S)\leq Mg^{n}$. 
Suppose further that there is a $\beta$ such that for every $\epsilon>0$, we have
\[
\lim_{g\to\infty}\P_g[|X_g(S)-\beta g^n|<\epsilon g^n]=1.
\]
Then,
\[
\lim_{g\to\infty}\frac{1}{g^n}\E_{g}[X_g]=\beta.
\]
\end{lemma}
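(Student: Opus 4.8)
The statement is the ``bounded convergence'' counterpart of Lemma \ref{Lemma Average to 0}, and the plan is to prove it by the same truncation idea. Set $Y_g = X_g/g^n$, so that $0 \le Y_g(S) \le M$ for every $S \in \SS_g$ and, by hypothesis, $\P_g[|Y_g(S) - \beta| < \epsilon] \to 1$ as $g \to \infty$ for each fixed $\epsilon > 0$; it suffices to show $\E_g[Y_g] \to \beta$, since $\frac{1}{g^n}\E_g[X_g] = \E_g[Y_g]$.

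Fix $\epsilon > 0$. For each $g$, partition $\SS_g$ into $G_g = \{S \in \SS_g : |Y_g(S) - \beta| < \epsilon\}$ and its complement $B_g$. Writing the expectation as a finite sum over $\SS_g$ and splitting it according to this partition gives
\[
\bigl|\E_g[Y_g] - \beta\bigr| \le \sum_{S \in \SS_g} |Y_g(S) - \beta|\,\P_g(S) = \sum_{S \in G_g} |Y_g(S)-\beta|\,\P_g(S) + \sum_{S \in B_g} |Y_g(S)-\beta|\,\P_g(S).
\]
On $G_g$ each factor $|Y_g(S) - \beta|$ is at most $\epsilon$, so the first sum is at most $\epsilon$. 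On $B_g$, since $0 \le Y_g(S) \le M$ we have $|Y_g(S) - \beta| \le M + |\beta|$, so the second sum is at most $(M+|\beta|)\,\P_g[B_g]$. Hence $|\E_g[Y_g] - \beta| \le \epsilon + (M+|\beta|)\,\P_g[B_g]$. Letting $g \to \infty$ and using $\P_g[B_g] \to 0$ yields $\limsup_{g\to\infty}\bigl|\E_g[Y_g] - \beta\bigr| \le \epsilon$, and since $\epsilon > 0$ was arbitrary, $\E_g[Y_g] \to \beta$, which is the claim.

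There is no real obstacle here; the one point that genuinely uses a hypothesis is the bound on the sum over $B_g$, which is exactly why the uniform bound $X_g(S) \le Mg^n$ is assumed: without it, the expectation could be dragged away from $\beta$ by a set of semigroups of probability tending to $0$ on which $X_g/g^n$ is unbounded. With the bound in place that contribution is $O(\P_g[B_g]) = o(1)$, and the argument is just the bounded convergence principle carried out uniformly across the finite spaces $\SS_g$.
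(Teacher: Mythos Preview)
Your proof is correct and follows essentially the same approach as the paper: partition $\SS_g$ into the ``good'' set where $|X_g-\beta g^n|<\epsilon g^n$ and its complement, and use the uniform bound $X_g\le Mg^n$ to control the contribution of the complement. The only cosmetic difference is that the paper bounds $\frac{1}{g^n}\E_g[X_g]$ from above and below separately (using nonnegativity for the lower bound and the upper bound $Mg^n$ for the upper bound), whereas you bound $|\E_g[Y_g]-\beta|$ in one stroke via $|Y_g-\beta|\le M+|\beta|$; the underlying idea is the same.
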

\begin{proof}
Fix $\epsilon_1,\epsilon_2>0$. We know that
\[
\lim_{g\to\infty}\P_g[|X_g(S)-\beta g^n|<\epsilon_1 g^n]=1.
\]
This means there is $M_1>0$ such that for $g>M_1$ we have
$$\P_g[|X_g(S)-\beta g^n|<\epsilon_1 g^n]>1-\epsilon_2.$$
This implies that for $g>M_1$, we have
$$\frac{1}{g^n}\E_{g}[X_g] =\frac{1}{g^n}\frac{1}{N(g)}\sum_{S\in\SS_g}X_g(S)
    \geq \frac{1}{g^n}\frac{1}{N(g)}(1-\epsilon_2)N(g)\left(\beta-\epsilon_1\right)g^n
    =(1-\epsilon_2)\left(\beta-\epsilon_1\right).$$
For $g>M_1$, we also have
\begin{equation*}
\begin{split}
    \frac{1}{g^n}\E_{g}[X_g]
    &= \frac{1}{g^n}\frac{1}{N(g)}\sum_{S\in\SS_g}X_g(S)\\
    &\leq \frac{1}{g^n}\frac{1}{N(g)}N(g)\left(\beta+\epsilon_1\right)g^n
    +\frac{1}{g^n}\frac{1}{N(g)}\epsilon_2N(g)Mg^n\\
    &=\left(\beta+\epsilon_1\right) +\epsilon_2 M.
\end{split}
\end{equation*}
Since $\epsilon_1$ and $\epsilon_2$ were arbitrary we see that
\[\lim_{g\to\infty}\frac{1}{g^n}\E_{g}[X_g]=\beta.\qedhere\]
\end{proof}
We now show how to apply this result to determine the expected value of certain invariants of numerical semigroups.
\begin{proof}[Proof that Theorem \ref{main result} implies Corollary \ref{main_cor}]
Since $e(S)\leq g(S)+1$, we see that Theorem \ref{main result}(1) and Lemma \ref{Lemma Average} imply Corollary \ref{main_cor}(1).  Similarly, since $t(S)\leq g(S)$, we see that Theorem \ref{main result}(2) and Lemma \ref{Lemma Average} imply Corollary \ref{main_cor}(2). 
\end{proof}

We apply the following result in Section \ref{Sec: weight} about the distribution of weights of $S\in \SS_g$.
\begin{lemma}\label{Concentration by Chebychev}
Suppose we have a sequence of random variables $X_g$ on $\SS_g$. Suppose further that there is a positive integer $n$ and constant $\beta$ such that 
\[
\lim_{g\to\infty}\frac{1}{g^n}\E_{g}[X_g]=\beta,\;\;\; \lim_{g\to\infty}\frac{1}{g^{2n}}\E_{g}[X_g^2]=\beta^2.
\]
Then for every $\epsilon>0$, we have
\[
\lim_{g\to\infty}\P_g[|X_g(S)-\beta g^n|<\epsilon g^n]=1.
\]
\end{lemma}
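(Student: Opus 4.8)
The statement to prove is Lemma \ref{Concentration by Chebychev}, which is a standard second-moment (Chebyshev) concentration argument.

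\medskip

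The plan is to apply Chebyshev's inequality to $X_g$ after showing that its variance is $o(g^{2n})$. First I would compute the variance: since $\Var_g[X_g] = \E_g[X_g^2] - (\E_g[X_g])^2$, and we are told $\frac{1}{g^{2n}}\E_g[X_g^2] \to \beta^2$ while $\frac{1}{g^n}\E_g[X_g] \to \beta$ (so $\frac{1}{g^{2n}}(\E_g[X_g])^2 \to \beta^2$ as well), we get $\frac{1}{g^{2n}}\Var_g[X_g] \to \beta^2 - \beta^2 = 0$. So $\Var_g[X_g] = o(g^{2n})$.

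\medskip

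Next, fix $\epsilon > 0$. Write $\mu_g = \E_g[X_g]$. By Chebyshev's inequality,
\[
\P_g\left[\,|X_g(S) - \mu_g| \geq \tfrac{\epsilon}{2} g^n\,\right] \leq \frac{\Var_g[X_g]}{(\epsilon/2)^2 g^{2n}} = \frac{4}{\epsilon^2}\cdot\frac{\Var_g[X_g]}{g^{2n}} \to 0
\]
as $g \to \infty$. Separately, since $\frac{1}{g^n}\mu_g \to \beta$, for all sufficiently large $g$ we have $|\mu_g - \beta g^n| < \frac{\epsilon}{2} g^n$. For such $g$, the event $|X_g(S) - \mu_g| < \frac{\epsilon}{2} g^n$ is contained in the event $|X_g(S) - \beta g^n| < \epsilon g^n$ by the triangle inequality, so
\[
\P_g\left[\,|X_g(S) - \beta g^n| < \epsilon g^n\,\right] \geq \P_g\left[\,|X_g(S) - \mu_g| < \tfrac{\epsilon}{2} g^n\,\right] = 1 - \P_g\left[\,|X_g(S) - \mu_g| \geq \tfrac{\epsilon}{2} g^n\,\right] \to 1.
\]
Since this probability is also at most $1$, the limit equals $1$, which is exactly the claim.

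\medskip

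There is no real obstacle here; the only thing to be slightly careful about is that the hypotheses control $\E_g[X_g]$ and $\E_g[X_g^2]$ rather than the centered variance directly, so the first step — deducing $\Var_g[X_g] = o(g^{2n})$ from the two given limits — must be spelled out, and one must recenter from the true mean $\mu_g$ to the target $\beta g^n$ using that $\mu_g = \beta g^n + o(g^n)$. Everything else is the textbook Chebyshev concentration argument.
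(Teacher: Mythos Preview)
Your proof is correct and follows essentially the same approach as the paper: deduce $\Var_g[X_g]=o(g^{2n})$ from the two moment hypotheses, apply Chebyshev with threshold $\tfrac{\epsilon}{2}g^n$, and recenter from $\mu_g$ to $\beta g^n$ using $\mu_g=\beta g^n+o(g^n)$ via the triangle inequality. The only cosmetic difference is that the paper introduces an auxiliary $\epsilon_1$ to bound $\Var_g[X_g]/g^{2n}$ explicitly, whereas you phrase it directly as a limit going to zero.
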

\begin{proof}
Fix $\epsilon>0$.
We have $\E_{g}[X_g]=\beta g^n+o(g^n)$ and $\E_{g}[X_g^2]=\beta^2 g^{2n}+o(g^{2n})$.
Therefore, $\Var_{g}[X_g]=o(g^{2n})$.
This means given $\epsilon_1>0$, there is an $M>0$ such that for all $g>M$, we have
\begin{eqnarray*}
\Var_{g}[X_g]& < &  \epsilon_1 g^{2n},\ \text{and}\\
|\E_{g}[X_g]- \beta g^n|& < &\frac{\epsilon}{2}g^n.
\end{eqnarray*}
By Chebychev's inequality, we see that for $g>M$,
\[
\P_g[|X_g(S)-\beta g^n| >\epsilon g^n]
\leq\P_g\Big[|X_g(S)-\E[X_g(S)]|>\frac{\epsilon}{2} g^n\Big]
\leq \frac{4\Var_{g}[X_g]}{\epsilon^2 g^{2n}}\leq \frac{4}{\epsilon^2}\epsilon_1.
\]
We conclude that
\[
\lim_{g\to\infty}\P_g[|X_g(S)-\beta g^n|<\epsilon g^n]=1.\qedhere
\]
\end{proof}

\section{Embedding dimension of a typical numerical semigroup}\label{sec:embedding} 

The goal of this section is to prove Proposition \ref{estimate e_1 e_2}.  We first prove the part about the typical size of $e_1(S)$.

\begin{proof}[Proof of Proposition \ref{estimate e_1 e_2}(1)]
For $S\in\SS_g$ we have $\#\left(\Hs(S)\cap [1,m(S)-1]\right)=m(S)-1$ and 
\[
\#\left(\Hs(S)\cap [2m(S)+1,F(S)]\right)\leq\max(0, F(S)-2m(S))\leq |F(S)-2m(S)|.
\]
It follows that
\[
0\leq g-(m(S)-1)-\#(\Hs(S)\cap [m(S),2m(S)-1])\leq |F(S)-2m(S)|.
\]
Note that $\#(\Hs(S)\cap [m(S),2m(S)-1]) = m(S) - e_1(S)$.  This implies that
\[
2m(S)-g-1\leq e_1(S) \leq 2m(S)-g-1+|F(S)-2m(S)|.
\]
Therefore,
\[
\Big|2m(S)-g-e_1(S)\Big|\leq 1+|F(S)-2m(S)|.
\]
We note that $2\gamma-1 =\frac{1}{\sqrt{5}}$ and conclude that
\begin{equation*}
\begin{split}
    \left|e_1(S)-\frac{1}{\sqrt{5}} g\right|
    &\leq \Big|e_1(S)-(2m(S)-g)\Big| +\Big|(2m(S)-g)-(2\gamma-1)g\Big|\\
    &\leq 1+|F(S)-2m(S)| +2|m(S)-\gamma g|.
\end{split}  
\end{equation*}
We see that for $g>\frac{3}{\epsilon}$, we have
\[
\P_g\left[\left|e_1(S)-\frac{1}{\sqrt{5}} g\right| \geq \epsilon g\right] 
\leq \P_g\left[\left|F(S)-2m(S)\right| \geq \frac{\epsilon}{3} g\right]
+\P_g\left[\left|m(S)-\gamma g\right| \geq \frac{\epsilon}{6} g\right].
\]
The result now follows from Theorem \ref{Kaplan Mult}.
\end{proof}

Next we bound $e_2(S)$ for numerical semigroups with $F(S)<2m(S)$. Let $F_n$ denote the $n$\textsuperscript{th} Fibonacci number, where $F_1 = F_2 = 1$, and $F_{n+2} = F_{n+1} + F_{n}$ for all $n \ge 1$. Recall that
\[
F_n = \frac{1}{\sqrt{5}}\left(\varphi^n - (1-\varphi)^n\right).
\]
\begin{proposition}
For any $g \ge 0$ we have
\[
\sum_{S\in\B(g)}e_2(S)\leq 2F_{g+1}.
\]
\end{proposition}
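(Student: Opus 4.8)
The plan is to combine Zhao's parametrization of $\B(g)$ (Proposition~\ref{Zhao F<2m}) with a generating‑function computation that collapses to Fibonacci numbers. Since $[m,2m-1]\cap S\subseteq\A(S)$, the quantity $e_2(S)$ counts the minimal generators of $S$ lying in $[2m,\infty)$. First I would describe these concretely for $S=S_{m,B}\in\B(g,m)$, where $B\subseteq\{1,\dots,m-1\}$ and $|B|=2m-g-2$. Any integer $n\ge 3m$ equals $m+(n-m)$ with $n-m\ge 2m\in S$, so is not a minimal generator; hence all generators counted by $e_2$ lie in $[2m,3m-1]$. For $n=2m+j$ with $0\le j\le m-1$, since $F(S)<2m$ any equation $n=a+b$ with $a,b\in S\setminus\{0\}$ forces $a,b\in[m,2m-1]\cap S=m+(\{0\}\cup B)$, so $n\in\A(S)$ exactly when $j\ge 1$, $j\notin B$ and $j\notin B+B$. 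Hence, writing $k=2m-g-2$,
\[
\sum_{S\in\B(g)}e_2(S)=\sum_{m}\ \sum_{j=1}^{m-1}\#\big\{B\subseteq\{1,\dots,m-1\}:|B|=k,\ j\notin B,\ j\notin B+B\big\}.
\]

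Next I would evaluate the inner count. The conditions on $B$ say exactly that $B$ omits $j$, omits $j/2$ when $j$ is even, and contains at most one element of each pair $\{a,j-a\}$ with $1\le a\le\lfloor(j-1)/2\rfloor$, while being arbitrary on $\{j+1,\dots,m-1\}$ — these pairs, together with $\{j\}$ and the possible midpoint, partition $\{1,\dots,j\}$. Choosing which of the $\lfloor(j-1)/2\rfloor$ pairs to use (two ways each) and then filling from the tail gives
\[
\#\big\{B:|B|=k,\ j\notin B,\ j\notin B+B\big\}=[x^{k}]\,(1+2x)^{\lfloor(j-1)/2\rfloor}(1+x)^{m-1-j}.
\]
Grouping the $j$'s by parity, those with $j=2t+1$ contribute $[x^{k}](1+2x)^{t}(1+x)^{m-2-2t}$ for $0\le t\le\lfloor(m-2)/2\rfloor$, and those with $j=2t+2$ contribute $[x^{k}](1+2x)^{t}(1+x)^{m-3-2t}$ for $0\le t\le\lfloor(m-3)/2\rfloor$.

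The heart of the argument is a telescoping identity: using $x^{2}=(1+x)^{2}-(1+2x)$ one checks that for any $N\ge 2T+2$,
\[
x^{2}\sum_{t=0}^{T}(1+2x)^{t}(1+x)^{N-2t}=(1+x)^{N}-(1+2x)^{T+1}(1+x)^{N-2T-2}.
\]
Applying this with $(N,T)=(m,\lfloor(m-2)/2\rfloor)$ and $(N,T)=(m-1,\lfloor(m-3)/2\rfloor)$ — in both cases $N-2T-2\in\{0,1\}$ — and recalling $k+2=2m-g$, the subtracted factors $(1+2x)^{T+1}(1+x)^{N-2T-2}$ are polynomials with nonnegative coefficients, so dropping them yields
\[
\sum_{S\in\B(g,m)}e_2(S)\ \le\ [x^{2m-g}]\big((1+x)^{m}+(1+x)^{m-1}\big)=\binom{m}{2m-g}+\binom{m-1}{2m-g}.
\]
Summing over $m$ and using the classical identity $\sum_{a\ge 0}\binom{n-a}{a}=F_{n+1}$ (via the substitutions $a=g-m$ and $a=g-1-m$ respectively), this becomes
\[
\sum_{S\in\B(g)}e_2(S)\ \le\ F_{g+1}+F_{g-1}\ \le\ 2F_{g+1},
\]
the last inequality holding because the Fibonacci sequence is nondecreasing on nonnegative indices; the case $g=0$ (where $\B(0)=\{\N_0\}$ and $e_2=0$) is immediate.

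I expect the only real obstacle to be the bookkeeping in the telescoping step: handling the two parity families, confirming that each leftover factor enters with a minus sign (so that discarding it gives an inequality in the right direction, not merely an approximation), and checking the boundary conditions $N\ge 2T+2$ and the index ranges in the Fibonacci identities. The structural description of $e_2$, the pairing count, and the final summation are otherwise routine.
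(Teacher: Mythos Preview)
Your proof is correct and follows essentially the same route as the paper: Zhao's parametrization of $\B(g,m)$, the characterization $2m+j\in\A(S_{m,B})\Leftrightarrow j\notin B\cup(B+B)$, a generating-function encoding of the pair constraints $\{a,j-a\}$, a telescoping summation over $j$, discarding a subtracted term with nonnegative coefficients, and summing over $m$ to Fibonacci numbers. You sharpen things slightly by keeping the exact count per $j$ (you also exclude the midpoint $j/2$ when $j$ is even) and by tracking the two parities separately, arriving at $F_{g+1}+F_{g-1}\le 2F_{g+1}$; the paper instead relaxes to a necessary condition, groups both parities under a single factor of $2$, and lands directly at $2\binom{m}{g-m}$.

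One bookkeeping slip to fix: your telescoping identity, as written, is off by a shift and is inconsistent with your own application of it. With summand $(1+2x)^{t}(1+x)^{N-2t}$, the telescope actually gives $(1+x)^{N+2}-(1+2x)^{T+1}(1+x)^{N-2T}$, not $(1+x)^{N}-(1+2x)^{T+1}(1+x)^{N-2T-2}$ (check $T=0$). Your odd- and even-$j$ sums are $\sum_t(1+2x)^t(1+x)^{m-2-2t}$ and $\sum_t(1+2x)^t(1+x)^{m-3-2t}$, so to apply with $N=m$ and $N=m-1$ as you do, the exponent in the stated identity should be $N-2-2t$; with that correction the identity is valid and your conclusion $\sum_{S\in\B(g,m)}e_2(S)\le\binom{m}{2m-g}+\binom{m-1}{2m-g}$ follows exactly as you claim.
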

\begin{proof}
In this proof we see the first instance of a style of argument that will appear several times later in this paper, so we give an outline of the strategy.  If $F(x)$ is a polynomial we write $[x^m]\left(F(x)\right)$ for its $x^m$-coefficient.  We show that the following estimate holds.\\
\noindent{\textbf{Claim}}: 
\begin{equation}\label{eqn:e2_sum}
\sum_{S \in \B(g,m)} e_2(S) \le 2\cdot [x^{g-m}]\left((1+x)^{m}-x^{\lfloor\frac{m}{2}\rfloor}(x+2)^{\lfloor\frac{m}{2}\rfloor}(1+x)^{m-2\lfloor\frac{m}{2}\rfloor}\right).
\end{equation}
Assuming this for now, we complete the proof of the proposition.  Since 
\[
x^{\lfloor\frac{m}{2}\rfloor}(x+2)^{\lfloor\frac{m}{2}\rfloor}(1+x)^{m-2\lfloor\frac{m}{2}\rfloor}
\]
has nonnegative coefficients, we see that 
\[
\sum_{S \in \B(g,m)} e_2(S) \le 2\cdot [x^{g-m}]\left((1+x)^{m}\right) = 2 \binom{m}{g-m}.
\]
For $S \in \B(g)$, it is clear that $m(S) \in [\lceil\frac{g}{2}\rceil+1,g+1]$.  Taking a sum over $m(S)$ gives
\[
\sum_{S\in\B(g)}e_2(S)
=
\sum_{m=\lceil\frac{g}{2}\rceil+1}^{g+1} \sum_{S \in \B(g,m)} e_2(S) 
\leq 2\sum_{m=\lceil\frac{g}{2}\rceil+1}^{g+1}\binom{m}{g-m} \le  2F_{g+1}.
\]

We now prove the inequality \eqref{eqn:e2_sum}.  We have
\[
\A(S_{m,B})=\{m\}\cup \{m+i\mid i\in B\}\cup \{2m+j\mid 1\leq j\leq m-1,j\notin B,j\notin B+B\},
\]
and so
\[
e_2(S_{m,B})=\#\{2m+j\mid 1\leq j\leq m-1,j\notin B,j\notin B+B\}.
\]
For $j\in [1,m-1]$, $2m+j\in\A(S_{m,B})$ if and only if $j\not\in B\cup(B+B)$.
Let $j_1=\lfloor\frac{j-1}{2}\rfloor$.
A necessary condition for $2m+j\in\A(S_{m,B})$ is that $j\notin B$ and none of $\{1,j-1\},\{2,j-2\},\dots,\{j_1,j-j_1\}$ is a subset of $B$. By inclusion-exclusion we see that
\begin{equation*}
    \begin{split}
    \#\{S\in\B(g,m)\mid 2m+j\in \A(S)\}
    &\leq\sum_{l=0}^{j_1}(-1)^{l}\binom{j_1}{l}\binom{m-2-2l}{2(m-1)-g-2l}\\
    &=\sum_{l=0}^{j_1}(-1)^{l}\binom{j_1}{l}\binom{m-2-2l}{g-m}.
    \end{split}
\end{equation*}
Taking a sum over these terms gives
\[
\sum_{S\in\B(g,m)}e_2(S) \leq 2\sum_{j_1=0}^{\lfloor\frac{m}{2}\rfloor-1}\sum_{l=0}^{j_1}(-1)^{l}\binom{j_1}{l}\binom{m-2-2l}{g-m}.
\]
Now, notice that 
\[
\sum_{l=0}^{j_1}(-1)^{l}\binom{j_1}{l}\binom{m-2-2l}{g-m} \\
=  [x^{g-m}]\left( 
(1+x)^{m-2}\sum_{l=0}^{j_1}(-1)^l\binom{j_1}{l}(1+x)^{-2l}\right).
\]
Since 
\[
(1+x)^{m-2}\sum_{l=0}^{j_1}(-1)^l\binom{j_1}{l}(1+x)^{-2l}
=(1+x)^{m-2}\left(1-\frac{1}{(1+x)^2}\right)^{j_1}=(1+x)^{m-2}\frac{x^{j_1}(x+2)^{j_1}}{(x+1)^{2j_1}},
\]
we see that
\[
\sum_{j_1=0}^{\lfloor\frac{m}{2}\rfloor-1}\sum_{l=0}^{j_1}(-1)^{l}\binom{j_1}{l}\binom{m-2-2l}{g-m} 
=
[x^{g-m}]\left(\sum_{j_1=0}^{\lfloor\frac{m}{2}\rfloor-1}(1+x)^{m-2}\frac{x^{j_1}(x+2)^{j_1}}{(x+1)^{2j_1}}
\right).
\]
Noting that 
\begin{eqnarray*}
    \sum_{j_1=0}^{\lfloor\frac{m}{2}\rfloor-1}(1+x)^{m-2}\frac{x^{j_1}(x+2)^{j_1}}{(x+1)^{2j_1}}
    &= &\frac{(1+x)^{m}}{(1+x)^{2}}\frac{\left(1-\left(\frac{x(x+2)}{(1+x)^2}\right)^{\lfloor\frac{m}{2}\rfloor}\right)}{\left(1-\frac{x(x+2)}{(1+x)^2}\right)}\\
    &= &(1+x)^{m}-x^{\lfloor\frac{m}{2}\rfloor}(x+2)^{\lfloor\frac{m}{2}\rfloor}(1+x)^{m-2\lfloor\frac{m}{2}\rfloor}
\end{eqnarray*}
completes the proof.
\end{proof}

We now give a similar, but more complicated, argument to bound $e_2(S)$ for numerical semigroups with $2m(S)<F(S)<3m(S)$.

\begin{lemma}\label{e2_Ckg}
For any positive integers $g$ and $k$, we have
\[
\sum_{S\in\cC(k,g)}e_2(S)\leq 2F_{g+k}.
\]
\end{lemma}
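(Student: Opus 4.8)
The plan is to adapt the argument used for the preceding proposition, now carrying the ``type'' parameter $A\in A_k$ through the whole computation; throughout I assume $g\ge 3k$, so that the bijection of Proposition~\ref{Zhao F<3m} applies (smaller $g$ is immediate from the trivial bound $e_2(S)\le g$ together with Theorem~\ref{thm_zhai}). First I would reduce the statement to a sum of indicator functions. Writing $\cC(k,g)=\bigsqcup_{m}\bigsqcup_{A\in A_k}\cC(m,k,A,g)$ and using the bijection $B\mapsto S_{m,A,B}$, I would check that for $S=S_{m,A,B}$ the minimal generators in $[m,2m-1]$ are exactly the elements of $S\cap[m,2m-1]$ (these make up $e_1$), that every minimal generator $\ge 2m$ lies in the window $[2m+1,3m+k]$ (if $x\ge 3m+k+1$ then $x-m>F(S)$, so $x-m\in S$ and $x$ is reducible), and that none of them equals $2m+k=F(S)$. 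Writing such a generator as $2m+j$ with $1\le j\le m+k$ and $j\ne k$ then gives
\[
\sum_{S\in\cC(k,g)}e_2(S)=\sum_{m}\ \sum_{A\in A_k}\ \sum_{\substack{1\le j\le m+k\\ j\ne k}}\#\bigl\{\,B:\ 2m+j\in\A(S_{m,A,B})\,\bigr\}.
\]

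Next, for fixed $m$, $A$ and $j$, I would isolate a \emph{necessary} condition for $2m+j\in\A(S_{m,A,B})$ phrased purely in terms of $B$; as in the previous proposition it suffices to use a necessary condition, since we only want an upper bound. A decomposition $2m+j=s_1+s_2$ with $s_1,s_2\in S\setminus\{0\}$ and $s_1\le s_2$ forces $s_1\in S\cap[m,2m-1]=(m+A)\sqcup\bigl(B\cap[m+k+1,2m-1]\bigr)$, and running through the cases produces three types of obstruction: (i) $j\in A+A$, coming from $2m+j=(m+a_1)+(m+a_2)$ and depending only on $A$ (when it holds the count is $0$); (ii) a set $R_j$ of positions of the form $m+j-a$, $a\in A$, that must avoid $B$, together with the requirement $2m+j\in B$ in the case $1\le j\le k-1$ (where $2m+j$ otherwise fails to lie in $S$); and (iii) a family $\mathcal P_j$ of ``complementary'' pairs of positions of $[m+k+1,2m-1]$ summing to $2m+j$, with an extra singleton when $j$ is even, of which at most one position per pair may lie in $B$.

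With this in hand I would run inclusion--exclusion exactly as before. Fix $m$ and $A$; let $P=m-1-|(A+A)\cap[0,k-1]|$ be the number of positions available to $B$ and let $N$ be the size of $B$ prescribed by Proposition~\ref{Zhao F<3m}, so $P-N=g-m-1-k+|A|$. Inclusion--exclusion over the pairs in $\mathcal P_j$ gives
\[
\#\bigl\{\,B:\ 2m+j\in\A(S_{m,A,B})\,\bigr\}\ \le\ [x^{\,P-|R_j|-N}]\!\left((1+x)^{P-|R_j|}\Bigl(1-\tfrac{1}{(1+x)^{2}}\Bigr)^{|\mathcal P_j|}\right),
\]
and, since $1-(1+x)^{-2}=\tfrac{x(x+2)}{(1+x)^{2}}$ and $x^{2}+2x$ has nonnegative coefficients dominated by those of $(1+x)^{2}$, the bracketed polynomial has coefficients dominated by those of $(1+x)^{P-|R_j|}$. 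Summing over the admissible $j$---organised by the value of $\lfloor\frac{j-1}{2}\rfloor$ as in the previous proof, which is where the factor $2$ comes from, and with the resulting geometric-type sum telescoping to a closed form whose negative part has nonnegative coefficients and may be discarded---bounds the inner double sum (for fixed $m$ and $A$) by a small multiple of binomial coefficients of the form $\binom{m-1-|(A+A)\cap[0,k-1]|}{\,\cdot\,}$.

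Finally I would sum over $A\in A_k$ and then over $m$, aiming to reach $2\sum_{m\ge 0}\binom{m}{(g+k-1)-m}=2F_{g+k}$ via the identity $\sum_{m\ge 0}\binom{m}{n-m}=F_{n+1}$. I expect this aggregation to be the main obstacle. There are on the order of $3^{k/2}$ sets $A\in A_k$---more than the factor $\phi^{k}$ separating $F_{g+k}$ from $F_{g+1}$---so the estimate can only close because each $A$ with a large sumset $(A+A)\cap[0,k-1]$ correspondingly shrinks the space $P$ available to $B$, and because the $j$-dependent shift $R_j$ in the exponent has to be tracked carefully through the telescoping. Matching the exponents $P$, $N$, $|R_j|$ and $|\mathcal P_j|$ against the size formula $N=2m-g+k-|A|-|(A+A)\cap[0,k]|$ of Proposition~\ref{Zhao F<3m} so that all of this cancels down to exactly $F_{g+k}$ (rather than a bound that degrades in $k$), together with verifying the precise shape of the sets $R_j$ and the pair-families $\mathcal P_j$ claimed above, is the delicate core of the proof.
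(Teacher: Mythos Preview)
Your decomposition by the type $A\in A_k$ is where the argument goes astray. You correctly flag the aggregation $\sum_{A\in A_k}$ as the ``delicate core,'' but you never carry it out, and it is not clear that it closes to $2F_{g+k}$ (or to any bound uniform in $k$) along the route you describe: the number of types grows like $3^{k/2}$, and the compensating shrinkage of the ambient set $P$ for each $A$ is something you would have to quantify precisely, not just invoke. As written, the proposal stops at exactly the point where the actual work would begin.

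The paper sidesteps this obstacle entirely by \emph{not} decomposing by type. Instead of the fine bijection of Proposition~\ref{Zhao F<3m}, it uses the coarser observation that any $S\in\cC(m,k,g)$ is determined by the set $B=S\cap[m+1,2m+k-1]$, which has size $2m+k-1-g$. This map is not a bijection onto all such subsets---many $B$'s do not come from a semigroup---but for an upper bound that is harmless. The necessary condition for $2m+j\in\A(S)$ then reads exactly as in the $\B(g,m)$ case: none of the pairs $\{m+1,m+j-1\},\dots,\{m+j_1,m+j-j_1\}$ (with $j_1=\lfloor\tfrac{j-1}{2}\rfloor$) is contained in $B$. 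The inclusion--exclusion and generating-function telescoping from the previous proposition go through verbatim with $m-1$ replaced by $m+k-1$, giving
\[
\sum_{S\in\cC(m,k,g)}e_2(S)\ \le\ 2\sum_{j_1}\sum_{l}(-1)^l\binom{j_1}{l}\binom{m+k-1-2l}{g-m},
\]
and after discarding the subtracted polynomial (which has nonnegative coefficients) and summing over $m$, one lands directly on a Fibonacci number via $\sum_m\binom{m}{n-m}=F_{n+1}$, with no sum over $A_k$ at all. The moral: since only an upper bound is needed, forget the structure inside $[m,m+k]$ and treat the whole window $[m+1,2m+k-1]$ uniformly.
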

\begin{proof}
A major step in the proof is to prove the following inequality.\\
\noindent{\textbf{Claim}}:
\begin{equation}\label{eqn:e2_sum_Ckg}
\sum_{S\in\cC(k,g)}e_2(S)\le 2\cdot [x^{g-m}]\left(
(1+x)^{m+k+1}-x^{\lfloor\frac{m+k+1}{2}\rfloor}(x+2)^{\lfloor\frac{m+k+1}{2}\rfloor}(1+x)^{m+k+1-2\lfloor\frac{m+k+1}{2}\rfloor}
\right).
\end{equation}
Assuming this for now, we complete the proof of the lemma. Since
\[
x^{\lfloor\frac{m+k+1}{2}\rfloor}(x+2)^{\lfloor\frac{m+k+1}{2}\rfloor}(1+x)^{m+k+1-2\lfloor\frac{m+k+1}{2}\rfloor}
\]
has nonnegative coefficients, we see that 
\[
\sum_{S\in\cC(k,g)}e_2(S)\le 2\cdot [x^{g-m}]\left(
(1+x)^{m+k+1}\right) 
= \binom{m+k-1}{g-m}.
\]
Taking a sum over $m$ shows that
\[
\sum_{S\in\cC(k,g)}e_2(S)\leq 2\sum_{m}\binom{m+k-1}{g-m}= 2F_{g+k},
\]
completing the proof.

We now prove the inequality \eqref{eqn:e2_sum_Ckg}. Numerical semigroups $S$ in $\cC(m,k,g)$ are determined by a subset $B=S\cap [m+1,2m+k-1]$ of size $2m-g+k-1$.
For $j\in [1,m+k]$, let $j_1=\lfloor\frac{j-1}{2}\rfloor$.
A necessary condition for $2m+j\in\A(S)$ is that none of $\{m+1,m+j-1\},\dots,\{m+j_1,m+j-j_1\}$ is a subset of $B$.
This means that we can bound the number of $S\in\cC(m,k,g)$ with $2m+j\in\A(S)$ by the number of subsets $B\subseteq[m+1,2m+k-1]$ of size $2m-g+k-1$ for which none of $\{m+1,m+j-1\},\dots,\{m+j_1,m+j-j_1\}$ is a subset of $B$. 
By inclusion-exclusion we see that
\begin{equation*}
    \begin{split}
    \#\{S\in\cC(m,k,g)\mid 2m+j\in \A(S)\}
    &\leq\sum_{l=0}^{j_1}(-1)^{l}\binom{j_1}{l}\binom{m+k-1-2l}{2m-g+k-1-2l}\\
    &=\sum_{l=0}^{j_1}(-1)^{l}\binom{j_1}{l}\binom{m+k-1-2l}{g-m}.
    \end{split}
\end{equation*}
Taking a sum over these terms gives
\[
\sum_{S\in\cC(m,k,g)}e_2(S) \leq 2\sum_{j_1=0}^{\lfloor\frac{m+k-1}{2}\rfloor}\sum_{l=0}^{j_1}(-1)^{l}\binom{j_1}{l}\binom{m+k-1-2l}{g-m}.
\]
Now, notice that
\[
\sum_{l=0}^{j_1}(-1)^{l}\binom{j_1}{l}\binom{m+k-1-2l}{g-m} 
=
[x^{g-m}]\left( 
(1+x)^{m+k-1}\sum_{l=0}^{j_1}(-1)^l\binom{j_1}{l}(1+x)^{-2l}
\right).
\]
Since,
\begin{equation*}
    \begin{split}
    (1+x)^{m+k-1}\sum_{l=0}^{j_1}(-1)^l\binom{j_1}{l}(1+x)^{-2l}
    &=(1+x)^{m+k-1}\left(1-\frac{1}{(1+x)^2}\right)^{j_1}\\
    &=(1+x)^{m+k-1}\frac{x^{j_1}(x+2)^{j_1}}{(x+1)^{2j_1}},    
    \end{split}
\end{equation*}
we see that
\[
\sum_{j_1=0}^{\lfloor\frac{m+k-1}{2}\rfloor}\sum_{l=0}^{j_1}(-1)^{l}\binom{j_1}{l}\binom{m+k-1-2l}{g-m} 
=
[x^{g-m}]\left(
 \sum_{j_1=0}^{\lfloor\frac{m+k-1}{2}\rfloor}(1+x)^{m+k-1}\frac{x^{j_1}(x+2)^{j_1}}{(x+1)^{2j_1}}
\right).
\]
Noting that 
\begin{eqnarray*}
   & &  \sum_{j_1=0}^{\lfloor\frac{m+k-1}{2}\rfloor}(1+x)^{m+k-1}\frac{x^{j_1}(x+2)^{j_1}}{(x+1)^{2j_1}}
   =\frac{(1+x)^{m+k+1}}{(1+x)^{2}}\frac{\left(1-\left(\frac{x(x+2)}{(1+x)^2}\right)^{\lfloor\frac{m+k-1}{2}\rfloor+1}\right)}{\left(1-\frac{x(x+2)}{(1+x)^2}\right)} \\
    & = &(1+x)^{m+k+1}-x^{\lfloor\frac{m+k+1}{2}\rfloor}(x+2)^{\lfloor\frac{m+k+1}{2}\rfloor}(1+x)^{m+k+1-2\lfloor\frac{m+k+1}{2}\rfloor}
\end{eqnarray*}
completes the proof of \eqref{eqn:e2_sum_Ckg}.
\end{proof}

\begin{lemma}\label{Average of e2}
We have
\[
\lim_{g\to\infty}\frac{1}{g}\E_g[e_2]=0.
\]
\end{lemma}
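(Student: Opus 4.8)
\smallskip
\noindent\emph{Proof proposal.} The plan is to split $\SS_g$ into a ``main'' part, on which the two exponential estimates proved above apply, and a ``tail'' of small measure, on which we use only the crude bound $e_2(S)\le e(S)\le g+1$. Fix $\epsilon>0$ and let $K=M(\epsilon)$ be the constant of Proposition~\ref{F 2m}. Set
\[
\G(g)=\B(g)\cup\bigcup_{k=1}^{K}\cC(k,g)\subseteq\SS_g .
\]
Because $F(S)\neq 2m(S)$ for every numerical semigroup, one checks directly that
\[
\SS_g\setminus\G(g)=\{S\in\SS_g\mid F(S)-2m(S)>K\}\cup\{S\in\SS_g\mid F(S)\ge 3m(S)\} ,
\]
so Proposition~\ref{F 2m} together with the Zhao--Zhai fact $\P_g[F(S)<3m(S)]\to 1$ recalled in Section~\ref{sec:F3m} gives
\[
\P_g[\SS_g\setminus\G(g)]\le \P_g[|F(S)-2m(S)|>K]+\P_g[F(S)\ge 3m(S)]<\epsilon+o(1)\qquad(g\to\infty).
\]

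On $\G(g)$ I would apply the bound $\sum_{S\in\B(g)}e_2(S)\le 2F_{g+1}$ established above together with Lemma~\ref{e2_Ckg}, obtaining
\[
\sum_{S\in\G(g)}e_2(S)\le 2F_{g+1}+\sum_{k=1}^{K}2F_{g+k}\le 2F_{g+1}+2KF_{g+K} .
\]
By the closed formula for $F_n$ and Theorem~\ref{thm_zhai} (which gives $N(g)\ge c\,\phi^{g}$ for some $c>0$ and all large $g$), the ratio $\bigl(2F_{g+1}+2KF_{g+K}\bigr)/N(g)$ is bounded by a constant depending only on the fixed number $K$, so
\[
\frac{1}{g\,N(g)}\sum_{S\in\G(g)}e_2(S)\longrightarrow 0\qquad(g\to\infty).
\]
For the tail, $e_2(S)\le g+1$ yields
\[
\frac{1}{g\,N(g)}\sum_{S\in\SS_g\setminus\G(g)}e_2(S)\le\frac{g+1}{g}\,\P_g[\SS_g\setminus\G(g)] ,
\]
whose $\limsup$ as $g\to\infty$ is at most $\epsilon$. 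Summing the two contributions gives $\limsup_{g\to\infty}\frac1g\E_g[e_2]\le\epsilon$; since $\epsilon>0$ is arbitrary and $e_2\ge 0$, this proves $\lim_{g\to\infty}\frac1g\E_g[e_2]=0$.

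The only delicate point is the choice of cutoff: since $\sum_{S\in\cC(k,g)}e_2(S)$ is of the \emph{same} exponential order as $N(g)$ (not smaller), it can be summed over $k$ only up to a bound that does not grow with $g$, and taking that bound to be $M(\epsilon)$ is exactly what forces the $\G(g)$-contribution to vanish after dividing by $g$. I expect no genuine obstacle beyond this bookkeeping and the minor point that $\{F(S)\ge 3m(S)\}$ must be absorbed into the tail separately, via the Zhao--Zhai theorem rather than via Proposition~\ref{F 2m}.
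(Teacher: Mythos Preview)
Your proof is correct and follows essentially the same strategy as the paper's: split the sum over $\SS_g$ into the piece $\B(g)\cup\bigcup_{k\le M(\epsilon)}\cC(k,g)$, controlled by the Fibonacci-type bounds, and the remaining tail, controlled by Proposition~\ref{F 2m} together with the trivial bound $e_2(S)\le g+1$.

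One small simplification: you do not actually need the Zhao--Zhai theorem to handle $\{F(S)\ge 3m(S)\}$ separately. For $g$ large (specifically $g>3M(\epsilon)$), any $S\in\SS_g$ with $F(S)\ge 3m(S)$ satisfies $F(S)-2m(S)\ge m(S)\ge F(S)/3\ge g/3>M(\epsilon)$, so this set is already contained in $\{F(S)-2m(S)>M(\epsilon)\}$ and is absorbed by Proposition~\ref{F 2m} alone. This is how the paper handles it, writing the complement simply as $\{S:F(S)>2m(S)+M(\epsilon)\}$. Your version is not wrong, just slightly redundant.
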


Proposition \ref{estimate e_1 e_2}(2) follows directly from this result together with Lemma \ref{Lemma Average to 0}.  Therefore, proving this result completes the proof of Theorem \ref{main result}(1).
\begin{proof}
Choose $\epsilon>0$ and consider the $M(\epsilon)$ given by Proposition \ref{F 2m}. For any $g$, applying Lemma \ref{e2_Ckg} gives
\begin{equation*}
    \begin{split}
    \sum_{S\in\SS_g}e_2(S)&= \sum_{S\in\B(g)}e_2(S)+\sum_{k=1}^{M(\epsilon)}\sum_{S\in\cC(k,g)}e_2(S)+\sum_{\substack{S\in\SS_g\\ F(S)> 2m(S)+M(\epsilon)}}e_2(S)\\
    &\leq 2F_{g+1}+\sum_{k=1}^{M(\epsilon)}2F_{g+k}+\sum_{\substack{S\in\SS_g\\ F(S)> 2m(S)+M(\epsilon)}}(g+1).
    \end{split}
\end{equation*}
Noting that $F_n < \frac{\varphi^n+1}{\sqrt{5}}$ and applying Proposition \ref{F 2m} to the last term in this expression gives
\begin{eqnarray*}
\sum_{S\in\SS_g}e_2(S)  &< & 2\frac{\phi^{g+1}+1}{\sqrt{5}}+2\sum_{k=1}^{M(\epsilon)}\frac{\phi^{g+k}+1}{\sqrt{5}}+(g+1)\epsilon N(g)\\
    &= &\phi^{g+1} \frac{2}{\sqrt{5}}\left(1+\frac{\phi^{M(\epsilon)}-1}{\phi-1}\right)+\frac{2}{\sqrt{5}}(M(\epsilon)+1)+(g+1)\epsilon N(g).
\end{eqnarray*}
Therefore,
\[
\limsup_{g\to\infty}\frac{1}{g}\frac{1}{N(g)}\sum_{S\in\SS_g}e_2(S)\leq \epsilon.
\]
Since $\epsilon$ was arbitrary, we see that
\[
\lim_{g\to\infty}\frac{1}{g}\frac{1}{N(g)}\sum_{S\in\SS_g}e_2(S)=0.\qedhere
\]
\end{proof}

\section{Type of a typical numerical semigroup}\label{sec:type}
The goal of this section is to prove Proposition \ref{estimate t_1 and t_2}.  We first prove the part about the typical size of $t_1(S)$.  The arguments in this section are quite similar to the arguments in Section \ref{sec:embedding}.
\begin{proof}[Proof of Proposition \ref{estimate t_1 and t_2}(1)]
Let $S \in \SS_g$. We know that 
\begin{eqnarray*}
\#\left(\Hs(S)\cap [1,m(S)-1]\right) & = & m(S)-1, \\
\#\left(\Hs(S)\cap [F(S)-m(S)+1,F(S)]\right) & = & t_1(S),
\end{eqnarray*}
and
\[
\#\left(\Hs(S)\cap [m(S),F(S)-m(S)]|\right)  \leq  \max(0, F(S)-2m(S))\leq |F(S)-2m(S)|.
\]
Therefore,
\[
|g-(m(S)-1)-t_1(S)| \leq  |F(S)-2m(S)|,
\]
and 
\begin{eqnarray*}
    |t_1(S)-(1-\gamma)g|
    &\leq&  |t_1(S)-g+m(S)-1|+1 + |m(S)-\gamma g|\\
    &\leq& 1+|F(S)-2m(S)|+ |m(S)-\gamma g|.
\end{eqnarray*}
Therefore, for $g> \frac{3}{\epsilon}$, we have
\[
\P_{g}\left[\left|t_1(S)-(1-\gamma) g \right|\geq \epsilon g\right]
\leq \P_{g}\left[|F(S)-2m(S)|\geq \frac{\epsilon}{3} g\right]
+\P_{g}\left[|m(S)-\gamma g|\geq \frac{\epsilon}{3} g\right].
\]
The result now follows from Theorem \ref{Kaplan Mult}.
\end{proof}

Next we bound $t_2(S)$ for numerical semigroups with $F(S)<2m(S)$.
\begin{lemma}\label{bound t2 depth 2}
For any $g \ge 0$, we have
\[
\sum_{S\in\B(g)}t_2(S)\leq F_{g+4}.
\]
\end{lemma}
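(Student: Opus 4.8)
The plan is to mirror the argument used for $e_2$ on $\B(g)$, but now counting pseudo-Frobenius numbers of $S_{m,B}$ that do not lie in the top window $[F-m+1,F]$. First I would set up the bookkeeping. Fix $m$ with $\lceil g/2\rceil+1\le m\le g+1$, so that $\B(g,m)\neq\emptyset$, and recall from Proposition \ref{Zhao F<2m} that $S_{m,B}=(m+B)\cup\{0,m,2m\rightarrow\}$ for a subset $B\subseteq\{1,\dots,m-1\}$ of size $2m-g-2$, with $F(S_{m,B})<2m$. Since every gap of $S_{m,B}$ is at most $F<2m$ and $m(S)=m$, every pseudo-Frobenius number lies in $[1,2m-1]$; the ones counted by $t_2$ lie in $\Hs(S)\cap[1,F-m]$, hence in $[1,m-1]$. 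So I need, for each $j\in[1,m-1]$, a necessary condition for ``$j\in PF(S_{m,B})$ and $j\le F-m$'', i.e. for $j$ to be a gap (so $j\notin B$) all of whose translates by nonzero semigroup elements land in $S$. The key combinatorial observation is the analogue of the $e_2$ computation: $j+m\in S$ forces $j\in B$ unless $j\in\{1,\dots,m-1\}\setminus B$ is irrelevant — more precisely, since $j+m\in S_{m,B}$ iff $j\in B$, and $j$ is supposed to be a gap, the pseudo-Frobenius condition $j+m\in S$ fails. Wait — I would instead use: if $j$ is a gap of $S_{m,B}$ with $j\le m-1$, then $j+m$ must lie in $S$ for $j$ to be pseudo-Frobenius, which (as $j+m<2m$) means $j\in B$, a contradiction. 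This seems to kill $t_2$ entirely on $\B(g,m)$, which is too strong, so the correct statement must be that the relevant pseudo-Frobenius numbers lie in $[m,F-m]\subseteq[m,m-1]=\emptyset$ unless $F\ge 2m$; hence the only contributions come from a boundary correction, and I would extract the precise constraint by writing $PF(S)\cap[1,F-m]$ explicitly in terms of which pairs $\{i,j-i\}$ and singletons avoid $B$.

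Concretely, I would prove a Claim of the shape
\[
\sum_{S\in\B(g,m)}t_2(S)\ \le\ [x^{g-m}]\Bigl(G_m(x)\Bigr)
\]
for an explicit polynomial $G_m(x)$ with nonnegative coefficients bounded above by $(1+x)^{m+c}$ for a small constant $c$, obtained by the same inclusion–exclusion and geometric-series manipulation as in the $e_2$ proof: for each candidate $j$ write a necessary avoidance condition on $B$ involving $j_1=\lfloor (j-1)/2\rfloor$ forbidden pairs plus the singleton $\{j\}$, apply inclusion–exclusion to bound $\#\{B: j\in PF(S_{m,B})\cap[1,F-m]\}$ by $\sum_{l}(-1)^l\binom{j_1}{l}\binom{m-2-2l}{g-m}$ (up to a shift by $1$ coming from the extra singleton $\{j\}$), and recognize $\sum_l(-1)^l\binom{j_1}{l}(1+x)^{-2l}=\bigl(x(x+2)/(1+x)^2\bigr)^{j_1}$. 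Summing the geometric series over $j_1$ collapses $G_m(x)$ to $(1+x)^{m+O(1)}$ minus a nonnegative term, so $\sum_{S\in\B(g,m)}t_2(S)\le \binom{m+O(1)}{g-m}$.

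Finally I would sum over $m$. Using the Fibonacci identity $\sum_{m}\binom{m+c}{g-m}=F_{g+c+?}$ (the same identity that produced $2F_{g+1}$ and $2F_{g+k}$ in the $e_2$ lemmas, with the shift determined by the constant $c$ appearing in $G_m$), the bound $\sum_{m=\lceil g/2\rceil+1}^{g+1}\binom{m+c}{g-m}$ telescopes to a single Fibonacci number, and I would arrange the constants so that the total is at most $F_{g+4}$. The main obstacle is pinning down the correct necessary condition on $B$ for $j\in PF(S_{m,B})$ with $j\le F-m$ — in particular tracking the extra singleton constraint $j\notin B$ and whatever constraint the pseudo-Frobenius condition $j+s\in S$ imposes for the small generator $s=m$ versus larger $s$ — and then verifying that the resulting shift in the binomial coefficient is small enough that the Fibonacci sum comes out as $F_{g+4}$ rather than something larger; this is exactly the kind of constant-chasing that the $e_2$ proofs illustrate, so I expect it to be routine once the avoidance condition is correctly identified, but it is the step most prone to off-by-one errors.
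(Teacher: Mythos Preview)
Your plan has a genuine gap at the very first step: for $j\in[1,m-1]$, every such $j$ is automatically a gap of $S_{m,B}$, regardless of $B$; the set $B$ records which $m+j$ lie in $S$, not which $j$ do. So the deduction ``$j+m\in S$ forces $j\in B$, a contradiction'' is not a contradiction at all --- it is the correct first constraint: $j\in PF(S_{m,B})\cap[1,m-1]$ requires $j\in B$. Continuing, for each nonzero $s=m+i\in S$ with $i\in B$ and $i+j\le m-1$, the pseudo-Frobenius condition forces $m+(i+j)\in S$, i.e.\ $i+j\in B$. Thus
\[
PF(S_{m,B})\cap[1,m-1]=\{j\in B:\ \forall i\in[1,m-1-j]\cap B,\ i+j\in B\},
\]
and the bad events to exclude by inclusion--exclusion are of the form ``$i\in B$ and $i+j\notin B$'', not ``$\{i,j-i\}\subseteq B$'' as in the $e_2$ argument. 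Your proposed singleton constraint ``$j\notin B$'' is exactly backwards.

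This changes the generating-function machinery. Forcing $l$ such bad events fixes $l$ elements \emph{in} $B$ and $l$ elements \emph{out} (plus $j\in B$), so the binomial is $\binom{m-2-2l}{2m-g-3-l}$ with the lower index shifting by $l$; the relevant identity becomes $\sum_l(-1)^l\binom{n}{l}x^l(1+x)^{-2l}=\bigl((1+x+x^2)/(1+x)^2\bigr)^{n}$, replacing your $x(x+2)/(1+x)^2$, and the geometric sum telescopes against $1-\tfrac{1+x+x^2}{(1+x)^2}=\tfrac{x}{(1+x)^2}$. The paper also splits into $j\ge\lceil m/2\rceil$ (where the pairs $(i,i+j)$ are disjoint and the count is exact) and $j\le\lfloor(m-1)/2\rfloor$ (where only the weaker necessary condition ``$j,2j\in B$ and $i\in B\Rightarrow i+j\in B$ for $i<j$'' is used for an upper bound). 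After these corrections the two ranges contribute at most $\binom{m}{g-m+2}$ and $\binom{m-1}{g-m+2}$, and summing over $m$ gives $F_{g+3}+F_{g+2}=F_{g+4}$.
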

\begin{proof}
Suppose $S \in \B(g)$.  Since $F(S) - m(S) \le m(S)-1$, we see that 
\[
t_2(S) \le \#\left(PF(S)\cap\left[1,m(S)-1\right]\right),
\]
so it is enough to prove that 
\[
\sum_{S\in\B(g)} \#\left(PF(S)\cap\left[1,m(S)-1\right]\right) \le F_{g+4}.
\]
We divide the set $PF(S)\cap\left[1,m(S)-1\right]$ into two pieces and bound the size of each. Note that $\lceil\frac{m}{2}\rceil-1=\lfloor\frac{m-1}{2}\rfloor$.\\
\noindent{\textbf{Claim}}:
\begin{equation}\label{eqn:t_2_big}
\begin{split}
& \sum_{S\in\B(g,m)}\#\left(PF(S)\cap\left[\left\lceil\frac{m}{2}\right\rceil,m-1\right]\right)\\
= & 
[x^{2m-g-3}]\left(x^{-1}\left((1+x)^{m}-(1+x+x^2)^{m-\lceil\frac{m}{2}\rceil}(1+x)^{2\lceil\frac{m}{2}\rceil-m}\right)\right)
\end{split}
\end{equation}
and
\begin{equation}\label{eqn:t_2_small}
\begin{split}
& \sum_{S\in\B(g,m)}\#\left(PF(S)\cap\left[1,\left\lfloor\frac{m-1}{2}\right\rfloor\right]\right)\\
\leq & 
[x^{2m-g-4}]\left(x^{-1}\left((1+x)^{m-1}-(1+x+x^2)^{\lfloor\frac{m-1}{2}\rfloor}(1+x)^{(m-1)-2\lfloor\frac{m-1}{2}\rfloor}
\right)\right).
\end{split}
\end{equation}
Assuming these results for now, we complete the proof of the lemma.  Since both \[
(1+x+x^2)^{m-\lceil\frac{m}{2}\rceil}(1+x)^{2\lceil\frac{m}{2}\rceil-m}
\]
and 
\[
(1+x+x^2)^{\lfloor\frac{m-1}{2}\rfloor}(1+x)^{(m-1)-2\lfloor\frac{m-1}{2}\rfloor}
\]
have nonnegative coefficients, we see that the following inequalities hold:
\begin{eqnarray*}
\sum_{S\in\B(g,m)}\#\left(PF(S)\cap\left[\left\lceil\frac{m}{2}\right\rceil,m-1\right]\right) & \le & [x^{2m-g-2}]\left((1+x)^{m}\right) = \binom{m}{2m-g-2} \\
\sum_{S\in\B(g,m)}\#\left(PF(S)\cap\left[1,\left\lfloor\frac{m-1}{2}\right\rfloor\right]\right)
& \leq & [x^{2m-g-3}]\left((1+x)^{m-1}\right) = \binom{m-1}{2m-g-3}.
\end{eqnarray*}
Taking a sum over $m$ shows that
\begin{equation*}
    \begin{split}
\sum_{S\in\B(g)} \#\left(PF(S)\cap\left[1,m(S)-1\right]\right) 
    &\leq \sum_{m}\binom{m}{2m-g-2}+\sum_{m}\binom{m-1}{2m-g-3}\\
    &=\sum_{m}\binom{m}{g-m+2}+\sum_{m}\binom{m-1}{g-m+2}\\
    &=F_{g+3}+F_{g+2}=F_{g+4},
    \end{split}
\end{equation*}
completing the proof.

Now we only need to prove \eqref{eqn:t_2_big} and \eqref{eqn:t_2_small}. We recall the definition of $S_{m,B}$ from Proposition \ref{Zhao F<2m} and note that
\[
PF(S_{m,B})= \{m+j\mid j\in [1,m-1]\setminus B\}\cup \{j\in B\mid \forall i\in [1,m-1-j]\cap B\colon i+j\in B\}.
\]
We see that
\[
PF(S_{m,B})\cap\left[1,m-1\right]=\{j\in B\mid \forall i\in [1,m-1-j]\cap B\colon i+j\in B\}.
\]
For $j\in [\lceil\frac{m}{2}\rceil,m-1]$, we have $[1,m-1-j]\cap (j+[1,m-1-j])=\emptyset$. For $j\in [\lceil\frac{m}{2}\rceil,m-1]$, by inclusion-exclusion we have
\begin{eqnarray*}
\#\{S\in\B(g,m)\mid j\in PF(S)\}
 & = &  \sum_{l=0}^{m-1-j}(-1)^{l}\binom{m-1-j}{l}\binom{(m-1)-1-2l}{2(m-1)-g-1-l} \\
 & =& [x^{2m-g-3}]\left(\sum_{l=0}^{m-1-j}(-1)^{l}\binom{m-1-j}{l}x^l(1+x)^{m-2-2l}\right)\\
 & = & [x^{2m-g-3}]\left((1+x)^{m-2}\left(1-\frac{x}{(1+x)^2}\right)^{m-1-j}\right)\\
 & = & 
[x^{2m-g-3}]\left( (1+x)^{m-2}\left(\frac{x^2+x+1}{(1+x)^2}\right)^{m-1-j}
\right).
\end{eqnarray*}

Therefore,
\begin{eqnarray*}
& & \sum_{S\in\B(g,m)}\#\left(PF(S)\cap\left[\left\lceil\frac{m}{2}\right\rceil,m-1\right]\right)\\
& = & \sum_{j=\lceil\frac{m}{2}\rceil}^{m-1}\sum_{l=0}^{m-1-j}(-1)^{l}\binom{m-1-j}{l}\binom{(m-1)-1-2l}{2(m-1)-g-1-l} \\
& = & [x^{2m-g-3}]\left(\sum_{j=\lceil\frac{m}{2}\rceil}^{m-1}(1+x)^{m-2}\left(\frac{x^2+x+1}{(1+x)^2}\right)^{m-1-j}\right).
\end{eqnarray*}
Noting that
\begin{eqnarray*}
    \sum_{j=\lceil\frac{m}{2}\rceil}^{m-1}(1+x)^{m-2}\left(\frac{x^2+x+1}{(1+x)^2}\right)^{m-1-j}
    &= & (1+x)^{m-2}\sum_{k=0}^{m-1-\lceil\frac{m}{2}\rceil}\left(\frac{x^2+x+1}{(1+x)^2}\right)^{k}\\
    &= &\frac{(1+x)^{m}}{(1+x)^{2}}\frac{\left(1-\left(\frac{x^2+x+1}{(1+x)^2}\right)^{m-\lceil\frac{m}{2}\rceil}\right)}{\left(1-\left(\frac{x^2+x+1}{(1+x)^2}\right)\right)}\\
    &=& \frac{(1+x)^{m}-(1+x+x^2)^{m-\lceil\frac{m}{2}\rceil}(1+x)^{2\lceil\frac{m}{2}\rceil-m}}{x}
\end{eqnarray*}
completes the proof of \eqref{eqn:t_2_big}.

Consider $j\in [1,\lfloor\frac{m-1}{2}\rfloor]$, so $2j\leq m-1$. A necessary condition for $j\in PF(S_{m,B})$ is that $j,2j\in B$ and for every $i\in [1,j-1]$, if $i\in B$ then $j+i\in B$. For $j\in [1,\lfloor\frac{m-1}{2}\rfloor]$, by inclusion-exclusion we have
\begin{eqnarray*}
\#\{S\in\B(g,m)\mid j\in PF(S)\}
& \leq & \sum_{l=0}^{j-1}(-1)^{l}\binom{j-1}{l}\binom{(m-1)-2-2l}{2(m-1)-g-2-l} \\
& = & [x^{2m-g-4}] \left( \sum_{l=0}^{j-1}(-1)^{l}\binom{j-1}{l} x^{l} (1+x)^{m-3-2l} \right)\\
& = & [x^{2m-g-4}]\left( (1+x)^{m-3}\left(1-\frac{x}{(1+x)^2}\right)^{j-1}\right)\\
& = & [x^{2m-g-4}]\left(
(1+x)^{m-3}\left(\frac{x^2+x+1}{(1+x)^2}\right)^{j-1}
\right).
\end{eqnarray*}
Therefore,
\begin{eqnarray*}
\sum_{S\in\B(g,m)}\#\left(PF(S)\cap\left[1,\left\lfloor\frac{m-1}{2}\right\rfloor\right]\right)
& \leq & \sum_{j=1}^{\lfloor\frac{m-1}{2}\rfloor}\sum_{l=0}^{j-1}(-1)^{l}\binom{j-1}{l}\binom{(m-1)-2-2l}{2(m-1)-g-2-l} \\
& = & [x^{2m-g-4}]\left(
\sum_{j=1}^{\lfloor\frac{m-1}{2}\rfloor}(1+x)^{m-3}\left(\frac{x^2+x+1}{(1+x)^2}\right)^{j-1}
\right).
\end{eqnarray*}
Noting that
\begin{eqnarray*}
    \sum_{j=1}^{\lfloor\frac{m-1}{2}\rfloor}(1+x)^{m-3}\left(\frac{x^2+x+1}{(1+x)^2}\right)^{j-1}
    &=&\frac{(1+x)^{m-1}}{(1+x)^{2}}\frac{\left(1-\left(\frac{x^2+x+1}{(1+x)^2}\right)^{\lfloor\frac{m-1}{2}\rfloor}\right)}{\left(1-\left(\frac{x^2+x+1}{(1+x)^2}\right)\right)}\\
    &= &\frac{(1+x)^{m-1}-(1+x+x^2)^{\lfloor\frac{m-1}{2}\rfloor}(1+x)^{(m-1)-2\lfloor\frac{m-1}{2}\rfloor}}{x}
\end{eqnarray*}
completes the proof of \eqref{eqn:t_2_small}.
\end{proof}

Next we bound $t_2(S)$ for numerical semigroups with $2m(S)<F(S)<3m(S)$.
\begin{lemma}
\[
\sum_{S\in\cC(k,g)}t_2(S)\leq F_{g+k+3}
\]
\end{lemma}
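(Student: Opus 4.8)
The plan is to mirror the proof of Lemma~\ref{bound t2 depth 2}, working inside $\cC(m,k,g)$ in place of $\B(g,m)$. First I would reduce to counting pseudo-Frobenius numbers in a bounded window. For $S\in\cC(k,g)$ set $m=m(S)$, so that $F(S)=2m+k$ and $F(S)-m(S)+1=m+k+1$. Since $\Hs(S)\cap[F(S)-m(S)+1,F(S)]\subseteq PF(S)$, the pseudo-Frobenius numbers of $S$ exceeding $m+k$ are precisely those counted by $t_1(S)$, so $t_2(S)=\#(PF(S)\cap[1,m+k])$; and since $(m+k)+m=F(S)\notin S$ we have $m+k\notin PF(S)$, hence $t_2(S)=\#(PF(S)\cap[1,m+k-1])$. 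Note also that $S\in\cC(k,g)$ forces $k<m(S)$, which rules out degenerate overlaps between the ranges $[1,m-1]$ and $[m,m+k-1]$ in what follows.

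Next I would use the injective parametrization of $\cC(m,k,g)$ by the set $\beta:=(S\cap[m+1,2m+k-1])-m\subseteq[1,m+k-1]$ of size $2m-g+k-1$, exactly as in the proof of Lemma~\ref{e2_Ckg}; here $i\in\beta$ if and only if $m+i\in S$, for $i\in[1,m+k-1]$. For $j\in[1,m+k-1]$, membership $j\in PF(S)$ forces $j+m\in S$, i.e.\ $j\in\beta$, and, for each $i$ with $1\le i\le m+k-1-j$, it forces the implication $i\in\beta\Rightarrow i+j\in\beta$ (this comes from $j+(m+i)\in S$ whenever $m+i\in S$); I would discard all remaining constraints (those coming from the larger elements of $S$, and the requirement that $j$ itself be a gap when $j\ge m$), which only weakens the bound. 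I would then split the window at $\lceil\frac{m+k}{2}\rceil$. For $j\ge\lceil\frac{m+k}{2}\rceil$ the position $j$ and the pairs $\{i,i+j\}$ with $1\le i\le m+k-1-j$ are pairwise disjoint, so inclusion--exclusion over these $m+k-1-j$ implications is exact; summing the resulting alternating sums over $j$ and running the generating-function manipulation used for \eqref{eqn:t_2_big} (summing the geometric series in $\frac{x^2+x+1}{(1+x)^2}$, then clearing the factor $\frac{x}{(1+x)^2}$) yields, after discarding a subtracted term with nonnegative coefficients,
\[
\sum_{S\in\cC(m,k,g)}\#\big(PF(S)\cap[\lceil\tfrac{m+k}{2}\rceil,\,m+k-1]\big)\ \le\ [x^{2m-g+k-1}]\big((1+x)^{m+k}\big)=\binom{m+k}{g-m+1}.
\]
For $j<\lceil\frac{m+k}{2}\rceil$ one has $2j\le m+k-1$, so one may strengthen the necessary condition to $j,2j\in\beta$ together with the implications for $i\in[1,j-1]$; the analogue of the computation for \eqref{eqn:t_2_small} then bounds the contribution of this part of the window by $\binom{m+k-1}{g-m+1}$.

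Finally I would sum over $m$; using $\sum_{n}\binom{n}{r-n}=F_{r+1}$ one gets $\sum_m\binom{m+k}{g-m+1}=F_{g+k+2}$ and $\sum_m\binom{m+k-1}{g-m+1}=F_{g+k+1}$, whence
\[
\sum_{S\in\cC(k,g)}t_2(S)\ \le\ F_{g+k+2}+F_{g+k+1}=F_{g+k+3}.
\]

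The step I expect to be the main obstacle is checking the necessary conditions on $\beta$ uniformly over the whole window $[1,m+k-1]$, and in particular over the overlap range $[m,m+k-1]$: there one must verify that $j+m\in S$ and the implications again become conditions on $\beta$ at positions disjoint from the pair-positions, and one must dispose of the boundary cases (for instance $j=m$, where $j\notin PF(S)$ for trivial reasons, and $j+i=m+k$, which is harmless since it only violates $j\in PF(S)$ while being absent from the necessary condition). Once this bookkeeping is settled, the two generating-function computations are routine variants of the displayed calculations in the proof of Lemma~\ref{bound t2 depth 2}.
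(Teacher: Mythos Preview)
Your proposal is correct and follows essentially the same route as the paper's proof: the paper parametrizes $S\in\cC(m,k,g)$ by $B=S\cap[m+1,2m+k-1]$ (your $\beta$ up to a shift by $m$), derives the same necessary conditions on $B$ for $j\in PF(S)$, splits the window $[1,m+k-1]$ at $\lceil\frac{m+k}{2}\rceil$, and runs the identical generating-function manipulations leading to $\binom{m+k}{g-m+1}+\binom{m+k-1}{g-m+1}$ before summing over $m$. The obstacle you flag in the overlap range $[m,m+k-1]$ is not a genuine difficulty, since you are only extracting \emph{necessary} conditions on $\beta$ (you explicitly discard the gap condition on $j$ and the constraint at $i=m+k-j$), and the paper likewise treats that range without further comment.
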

\begin{proof}
First recall from Section \ref{sec:F3m} that for positive integers $k$ and $m$ with $k<m$, we have
\[
\cC(m,k,g) = \{S\in \SS_g \mid m(S) = m,\ F(S) = 2m(S) + k\}.
\]
An $S \in \cC(m,k,g)$ is determined by $B=S\cap [m+1,2m+k-1]$ where $|B| = 2m-g+k-1$. Let $j\in [1,m+k-1]$. If $j \in PF(S)$ then:
\vspace{-.3cm}
\begin{itemize}
    \item $j+m\in B$,
    \item for every $i\in[1,m+k-1-j]$, if $m+i\in B$ then $m+i+j\in B$.
\end{itemize}
\vspace{-.2cm}
We bound $\#\{S\in\cC(m,k,g) \mid j\in PF(S)\}$ by counting subsets $B$ satisfying these conditions. Our argument is similar to the proof of Lemma \ref{bound t2 depth 2}.  For $S \in \cC(m,k,g)$ we have 
$t_2(S) = \#\left(PF(S)\cap\left[1,m+k-1\right]\right)$. Note that $\lceil\frac{m+k}{2}\rceil-1=\lfloor\frac{m+k-1}{2}\rfloor$.  We divide the elements of $PF(S)\cap\left[1,m+k-1\right]$ into two sets and consider each separately.  \\
\noindent{\textbf{Claim}}:
\begin{equation}\label{eqn:t2_sum_cmkg_big}
\begin{split}
&\sum_{S\in\cC(m,k,g)}\#\left(PF(S)\cap\left[\left\lceil\frac{m+k}{2}\right\rceil,m+k-1\right]\right)\\
   & \le 
    [x^{2m-g+k-2}]\left(x^{-1}\left((1+x)^{m+k}-(1+x+x^2)^{m+k-\lceil\frac{m+k}{2}\rceil}(1+x)^{2\lceil\frac{m+k}{2}\rceil-m-k}\right)\right)
    \end{split}
\end{equation}
and 
\begin{equation}\label{eqn:t2_sum_cmkg_small}
\begin{split}
&\sum_{S\in\cC(m,k,g)}\#\left(PF(S)\cap\left[1,\left\lfloor\frac{m+k-1}{2}\right\rfloor\right]\right) \\
&\le 
[x^{2m-g+k-3}]\left(x^{-1}\left(
(1+x)^{m+k-1}-(1+x+x^2)^{\lfloor\frac{m+k-1}{2}\rfloor}(1+x)^{(m+k-1)-2\lfloor\frac{m+k-1}{2}\rfloor}
\right)\right).
\end{split}
\end{equation}
Assuming these results for now, we complete the proof of the lemma.  Since 
\[
(1+x+x^2)^{m+k-\lceil\frac{m+k}{2}\rceil}(1+x)^{2\lceil\frac{m+k}{2}\rceil-m-k}
\]
and 
\[
(1+x+x^2)^{\lfloor\frac{m+k-1}{2}\rfloor}(1+x)^{(m+k-1)-2\lfloor\frac{m+k-1}{2}\rfloor}
\]
have nonnegative coefficients, we see that the following inequalities hold:
\begin{eqnarray*}
\sum_{S\in\cC(m,k,g)}\#\left(PF(S)\cap\left[\left\lceil\frac{m+k}{2}\right\rceil,m+k-1\right]\right) 
& \le & 
[x^{2m-g+k-1}]\left((1+x)^{m+k}\right) \\
&  = & \binom{m+k}{2m-g+k-1}, \\
\sum_{S\in\cC(m,k,g)}\#\left(PF(S)\cap\left[1,\left\lfloor\frac{m+k-1}{2}\right\rfloor\right]\right)
& \leq & 
[x^{2m-g+k-2}]\left(
(1+x)^{m+k-1}\right) \\
&  = & \binom{m+k-1}{2m-g+k-2}.
\end{eqnarray*}
Therefore,
\begin{eqnarray*}
\sum_{S\in\cC(m,k,g)}t_2(S) & = & 
\sum_{S\in\cC(m,k,g)}\#\left(PF(S)\cap\left[1,m+k-1\right]\right)\\
&\leq &  \binom{m+k}{2m-g+k-1}+\binom{m+k-1}{2m-g+k-2}.
\end{eqnarray*}
Taking a sum over $m$ shows that
\begin{equation*}
    \begin{split}
    \sum_{S\in\cC(k,g)}t_2(S)
    &\leq \sum_{m}\binom{m+k}{2m-g+k-1}+\sum_{m}\binom{m+k-1}{2m-g+k-2}\\
    &=\sum_{m}\binom{m+k}{g-m+1}+\sum_{m}\binom{m+k-1}{g-m+1}\\
    &=F_{g+k+2}+F_{g+k+1}=F_{g+k+3}.
    \end{split}
\end{equation*}
This completes the proof.

We now need only prove \eqref{eqn:t2_sum_cmkg_big} and \eqref{eqn:t2_sum_cmkg_small}.  For $j\in [\lceil\frac{m+k}{2}\rceil,m+k-1]$, we have \[
[1,m+k-1-j]\cap (j+[1,m+k-1-j])=\emptyset.
\]
For $j\in [\lceil\frac{m+k}{2}\rceil,m+k-1]$, by inclusion-exclusion we have that $\#\{S\in\cC(m,k,g)\mid j\in PF(S)\}$ is at most
\begin{eqnarray*}
& & \sum_{l=0}^{m+k-1-j}(-1)^{l}\binom{m+k-1-j}{l}\binom{(m+k-1)-1-2l}{(2m-g+k-1)-1-l} \\
& =& [x^{2m-g+k-2}]\left(\sum_{l=0}^{m+k-1-j}(-1)^{l}\binom{m+k-1-j}{l}x^l(1+x)^{m+k-2-2l}\right)\\
& = & [x^{2m-g+k-2}]\left((1+x)^{m+k-2}\left(1-\frac{x}{(1+x)^2}\right)^{m+k-1-j}\right) \\
& = & [x^{2m-g+k-2}]\left(
(1+x)^{m+k-2}\left(\frac{x^2+x+1}{(1+x)^2}\right)^{m+k-1-j}
\right).
\end{eqnarray*}

Therefore,
\begin{eqnarray*}
& & \sum_{j=\lceil\frac{m+k}{2}\rceil}^{m+k-1}\sum_{l=0}^{m+k-1-j}(-1)^{l}\binom{m+k-1-j}{l}\binom{(m+k-1)-1-2l}{(2m-g+k-1)-1-l} \\
& = & [x^{2m-g+k-2}]
\left(
\sum_{j=\lceil\frac{m+k}{2}\rceil}^{m+k-1}(1+x)^{m+k-2}\left(\frac{x^2+x+1}{(1+x)^2}\right)^{m+k-1-j}
\right).
\end{eqnarray*}
Noting that
\begin{eqnarray*}
    & & \sum_{j=\lceil\frac{m+k}{2}\rceil}^{m+k-1}(1+x)^{m+k-2}\left(\frac{x^2+x+1}{(1+x)^2}\right)^{m+k-1-j}\\
    & = & (1+x)^{m+k-2}\sum_{s=0}^{m+k-1-\lceil\frac{m+k}{2}\rceil}\left(\frac{x^2+x+1}{(1+x)^2}\right)^{s}
    = \frac{(1+x)^{m+k}}{(1+x)^{2}}\frac{\left(1-\left(\frac{x^2+x+1}{(1+x)^2}\right)^{m+k-\lceil\frac{m+k}{2}\rceil}\right)}{\left(1-\left(\frac{x^2+x+1}{(1+x)^2}\right)\right)}\\
    &=& \frac{(1+x)^{m+k}-(1+x+x^2)^{m+k-\lceil\frac{m+k}{2}\rceil}(1+x)^{2\lceil\frac{m+k}{2}\rceil-m-k}}{x},
\end{eqnarray*}
completes the proof of \eqref{eqn:t2_sum_cmkg_big}.

Consider $j\in [1,\lfloor\frac{m+k-1}{2}\rfloor]$, so $2j\leq m+k-1$.
A necessary condition for $j\in PF(S)$ is that $j+m,2j+m\in B$ and for every $i\in [1,j-1]$, if $m+i\in B$ then $m+j+i\in B$. For $j\in [1,\lfloor\frac{m+k-1}{2}\rfloor]$, by inclusion-exclusion we have
\begin{eqnarray*}
\#\{S\in\cC(m,k,g)\mid j\in PF(S)\}
& \leq & \sum_{l=0}^{j-1}(-1)^{l}\binom{j-1}{l}\binom{(m+k-1)-2-2l}{(2m-g+k-1)-2-l}\\
& =& [x^{2m-g+k-3}]\left(\sum_{l=0}^{j-1}(-1)^{l}\binom{j-1}{l}x^l(1+x)^{m+k-3-2l}\right)\\
& = & [x^{2m-g+k-3}]\left(
(1+x)^{m+k-3}\left(1-\frac{x}{(1+x)^2}\right)^{j-1}
\right)\\
& = & [x^{2m-g+k-3}]\left(
(1+x)^{m+k-3}\left(\frac{x^2+x+1}{(1+x)^2}\right)^{j-1}
\right).
\end{eqnarray*}
Therefore,
\begin{eqnarray*}
& & \sum_{S\in\cC(m,k,g)}\#\left(PF(S)\cap\left[1,\left\lfloor\frac{m+k-1}{2}\right\rfloor\right]\right) \\
& \leq & \sum_{j=1}^{\lfloor\frac{m+k-1}{2}\rfloor}\sum_{l=0}^{j-1}(-1)^{l}\binom{j-1}{l}\binom{(m+k-1)-2-2l}{(2m-g+k-1)-2-l}\\
& = & [x^{2m-g+k-3}]\left(
\sum_{j=1}^{\lfloor\frac{m+k-1}{2}\rfloor}(1+x)^{m+k-3}\left(\frac{x^2+x+1}{(1+x)^2}\right)^{j-1}
\right).
\end{eqnarray*}
Noting that
\begin{eqnarray*}
    & & \sum_{j=1}^{\lfloor\frac{m+k-1}{2}\rfloor}(1+x)^{m+k-3}\left(\frac{x^2+x+1}{(1+x)^2}\right)^{j-1}\\
    & = &  \frac{(1+x)^{m+k-1}}{(1+x)^{2}}\frac{\left(1-\left(\frac{x^2+x+1}{(1+x)^2}\right)^{\lfloor\frac{m+k-1}{2}\rfloor}\right)}{\left(1-\left(\frac{x^2+x+1}{(1+x)^2}\right)\right)}\\
    &=& \frac{(1+x)^{m+k-1}-(1+x+x^2)^{\lfloor\frac{m+k-1}{2}\rfloor}(1+x)^{(m+k-1)-2\lfloor\frac{m+k-1}{2}\rfloor}}{x}
\end{eqnarray*}
completes the proof of \eqref{eqn:t2_sum_cmkg_small}.
\end{proof}

\begin{lemma}\label{ave t2 0}
We have
\[
\lim_{g\to\infty}\frac{1}{g}\E_g[t_2]=0.
\]
\end{lemma}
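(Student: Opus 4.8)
The plan is to follow the same strategy as the proof of Lemma \ref{Average of e2}, essentially verbatim but with the two summation lemmas for $t_2$ in place of the ones for $e_2$. Fix $\epsilon > 0$ and let $M(\epsilon)$ be as in Proposition \ref{F 2m}. For a given $g$ I would split the sum $\sum_{S\in\SS_g} t_2(S)$ according to the value of $F(S) - 2m(S)$: semigroups with $F(S) < 2m(S)$ (those in $\B(g)$), semigroups with $F(S) = 2m(S) + k$ for $1 \le k \le M(\epsilon)$ (those in $\cC(k,g)$), and the remaining ones with $F(S) > 2m(S) + M(\epsilon)$.

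For the first group I would apply Lemma \ref{bound t2 depth 2}, which gives $\sum_{S\in\B(g)} t_2(S) \le F_{g+4}$. For the second group I would apply the preceding lemma, which gives $\sum_{S\in\cC(k,g)} t_2(S) \le F_{g+k+3}$ for each $k$. For the last group I would use the trivial bound $t_2(S) \le t(S) \le g$ together with Proposition \ref{F 2m}, which bounds the number of such semigroups by $\epsilon N(g)$ once $g$ is large. Combining these and using $F_n < \frac{\varphi^n + 1}{\sqrt5}$ as in the earlier proof yields, for all sufficiently large $g$,
\[
\sum_{S\in\SS_g} t_2(S) < \frac{\varphi^{g+4}}{\sqrt5}\left(1 + \frac{\varphi^{M(\epsilon)} - 1}{\varphi - 1}\right) + \frac{M(\epsilon) + 1}{\sqrt5} + g\,\epsilon\, N(g).
\]

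Dividing by $g N(g)$ and letting $g \to \infty$, Theorem \ref{thm_zhai} tells us $N(g)$ grows like a positive constant times $\varphi^g$, so the first two terms on the right contribute $0$ in the limit while the last contributes $\epsilon$. Hence $\limsup_{g\to\infty}\frac{1}{g}\E_g[t_2] \le \epsilon$, and since $\epsilon > 0$ was arbitrary and $t_2 \ge 0$, we conclude $\lim_{g\to\infty}\frac1g\E_g[t_2] = 0$. There is no real obstacle here beyond the work already done: the only delicate point, as in Lemma \ref{Average of e2}, is controlling the contribution of semigroups with $F(S) - 2m(S)$ large, and this is handled entirely by Proposition \ref{F 2m} combined with the crude bound $t_2(S) \le g$.
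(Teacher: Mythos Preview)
Your proof is correct and follows essentially the same approach as the paper's own argument: the same three-way split by $F(S)-2m(S)$, the same two Fibonacci bounds from Lemma~\ref{bound t2 depth 2} and the preceding lemma, the same crude bound on the tail via Proposition~\ref{F 2m}, and the same limiting step using Theorem~\ref{thm_zhai}. The only differences are cosmetic (the paper inserts an unnecessary factor of $2$ in front of the Fibonacci terms and uses $g+1$ rather than $g$ for the trivial bound on $t_2$).
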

Proposition \ref{estimate t_1 and t_2}(2) follows directly from this result together with Lemma \ref{Lemma Average to 0}.  Therefore, proving this result completes the proof of Theorem \ref{main result}(2).
\begin{proof}
Fix $\epsilon>0$. Consider the $M(\epsilon)$ given by Proposition \ref{F 2m}. For any $g$ we have
\begin{eqnarray*}
    \sum_{S\in\SS_g}t_2(S)&=& \sum_{S\in\B(g)}t_2(S)+\sum_{k=1}^{M(\epsilon)}\sum_{S\in\cC(k,g)}t_2(S)+\sum_{\substack{S\in\SS_g\\ F(S)> 2m(S)+M(\epsilon)}}t_2(S)\\
    &\leq& 2F_{g+4}+\sum_{k=1}^{M(\epsilon)}2F_{g+k+3}+\sum_{\substack{S\in\SS_g\\ F(S)> 2m(S)+M(\epsilon)}}(g+1)
\end{eqnarray*}
Noting that $F_n < \frac{\varphi^n+1}{\sqrt{5}}$ and applying Proposition \ref{F 2m} shows that this is less than
\begin{eqnarray*}
    & & 2\frac{\phi^{g+4}+1}{\sqrt{5}}+2\sum_{k=1}^{M(\epsilon)}\frac{\phi^{g+k+3}+1}{\sqrt{5}}+(g+1)\epsilon N(g)\\
    &=&\phi^{g+4} \frac{2}{\sqrt{5}}\left(1+\frac{\phi^{M(\epsilon)}-1}{\phi-1}\right)+\frac{2}{\sqrt{5}}(M(\epsilon)+1)+(g+1)\epsilon N(g).
\end{eqnarray*}

Therefore,
\[
\limsup_{g\to\infty}\frac{1}{g}\frac{1}{N(g)}\sum_{S\in\SS_g}t_2(S)\leq \epsilon.
\]
Since $\epsilon$ was arbitrary, we see that
\[
\lim_{g\to\infty}\frac{1}{g}\frac{1}{N(g)}\sum_{S\in\SS_g}t_2(S)=0.\qedhere
\]
\end{proof}

\section{The probability that a subset is contained in a semigroup of genus $g$}\label{sec:ind_prob}
In this section, we consider a set of positive integers and study the proportion of semigroups in $\SS_g$ that contain some of them but not others.  Intuitively, integers that are large relative to $g$ should be contained in most, or all, semigroups in $\SS_g$, and integers that are small relative to $g$ should be contained in very few semigroups in $\SS_g$.  It is the integers in `the middle' where the statistical behavior is not so obvious.  Our goal is to make this kind of reasoning precise.

Define a step function $f_1:[0,2]\setminus\{\gamma,2\gamma\}\to [0,1]$ by
\[
f_1(x) =
\left\{
	\begin{array}{ll}
		0  & \mbox{if } 0 \leq x <\gamma \\
		\frac{\sqrt{5}-1}{2}  & \mbox{if } \gamma< x<2\gamma \\
		1 & \mbox{if } 2\gamma<x\leq 2.
	\end{array}
\right.
\]
We show that $f_1\left(\frac{n}{g}\right)$ is a good approximation for the probability that $n$ is contained in a random element of $\SS_g$, and also that these probabilities for various $n$ are independent.

\begin{thm}\label{prob C in S and C' not in S}
Fix $l_1,l_2\geq 0$ and $\epsilon_1,\epsilon_2>0$. There exists an $M(\epsilon_1,\epsilon_2)>0$, such that for all $g>M(\epsilon_1,\epsilon_2)$ and all pairs of subsets
\[
C,C'\subseteq \big[1,(\gamma-\epsilon_1)g\big) \cup \big((\gamma+\epsilon_1)g,(2\gamma-\epsilon_1)g\big) \cup \big((2\gamma+\epsilon_1)g,2g\big]
\]
with $|C|=l_1$, $|C'|=l_2$ and $C\cap C'=\emptyset$, we have
\[
\left|\P_g[C\subseteq S \text{ and } C'\cap S=\emptyset]-\prod_{n\in C}f_{1}\left(\frac{n}{g}\right)\prod_{n\in C'}\left(1-f_{1}\left(\frac{n}{g}\right)\right)\right|<\epsilon_2.
\]
\end{thm}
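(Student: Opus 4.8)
The plan is to restrict attention to the ``typical'' regime, where the explicit bijections of Section~\ref{sec:F3m} are available, and there to compute the probability exactly as a ratio of binomial coefficients. Fix $\eta>0$, to be chosen small in terms of $\epsilon_2$, and set $l=l_1+l_2$. Using Theorem~\ref{Kaplan Mult}(1), Proposition~\ref{F 2m} (applied with $\eta/3$ in place of $\epsilon$, which produces an integer $K$), and the conjecture of Zhao proved by Zhai that $\P_g[F(S)\geq 3m(S)]\to 0$, I would first choose $\delta\in(0,\epsilon_1/2)$ and $K$ so that, for all large $g$, the set
\[
\G=\bigcup_{|m-\gamma g|<\delta g}\B(g,m)\ \cup\bigcup_{\substack{|m-\gamma g|<\delta g\\ 1\leq k\leq K,\ A\in A_k}}\cC(m,k,A,g)
\]
satisfies $\P_g[\SS_g\setminus\G]<\eta$; indeed $\SS_g\setminus\G$ is contained in $\{|m(S)-\gamma g|\geq\delta g\}\cup\{F(S)-2m(S)>K\}\cup\{F(S)\geq 3m(S)\}$, and each of these three events has probability below $\eta/3$ once $g$ is large. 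Dispose first of a degenerate case: if $C$ meets $[1,(\gamma-\epsilon_1)g)$ or $C'$ meets $((2\gamma+\epsilon_1)g,2g]$, then $\prod_{n\in C}f_1(n/g)\prod_{n\in C'}(1-f_1(n/g))=0$, while for every $S\in\G$ such an element of $C$ lies below $m(S)$ and is therefore a gap of $S$, and such an element of $C'$ exceeds $F(S)$ and therefore lies in $S$; either way the event $C\subseteq S$ and $C'\cap S=\emptyset$ fails on all of $\G$, so its probability is at most $\P_g[\SS_g\setminus\G]<\eta$, and taking $\eta\leq\epsilon_2$ settles this case.

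Assume now that $C\subseteq\big((\gamma+\epsilon_1)g,(2\gamma-\epsilon_1)g\big)\cup\big((2\gamma+\epsilon_1)g,2g\big]$ and $C'\subseteq\big[1,(\gamma-\epsilon_1)g\big)\cup\big((\gamma+\epsilon_1)g,(2\gamma-\epsilon_1)g\big)$, and let $a$ and $b$ be the numbers of elements of $C$ and of $C'$ lying in the middle interval $I=\big((\gamma+\epsilon_1)g,(2\gamma-\epsilon_1)g\big)$; then the right-hand side of the asserted inequality equals $p^a(1-p)^b$ with $p=\frac{\sqrt 5-1}{2}$. The heart of the argument is a bookkeeping step, valid in every block making up $\G$ once $g$ is large (using $\delta<\epsilon_1/2$ and $k\leq K$): for $n\in I$ one has $m(S)<n<2m(S)$, and in a $\cC$ block also $n>m(S)+k$, so in a block $\B(g,m)$ membership $n\in S_{m,B}$ is equivalent to $n-m\in B$, and in a block $\cC(m,k,A,g)$ membership $n\in S_{m,A,B}$ is equivalent to $n\in B$ (here $n<2m$ ensures that $n$ avoids the forced elements $2m+(A+A)$ and $2m+((A+A)\cap[0,k])$); moreover every element of $C$ in the high interval $((2\gamma+\epsilon_1)g,2g]$ automatically lies in $S$, and every element of $C'$ in the low interval $[1,(\gamma-\epsilon_1)g)$ is automatically a gap of $S$. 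Hence, inside a fixed block the event $C\subseteq S$ and $C'\cap S=\emptyset$ is equivalent to requiring that $a$ prescribed elements lie in the free set $B$ and that $b$ further prescribed elements do not; since $C\cap C'=\emptyset$, these $a+b$ elements are distinct, so by Propositions~\ref{Zhao F<2m} and~\ref{Zhao F<3m} the number of $S$ in the block with this property is $\binom{N-a-b}{r-a}$, where the block has size $\binom{N}{r}$, with $N$ the cardinality of the index set for $B$ and $r=|B|$.

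It remains to estimate this ratio and sum. In $\B(g,m)$ one has $N=m-1$, $r=2m-g-2$; in $\cC(m,k,A,g)$ one has $N=(m-1)-|(A+A)\cap[0,k-1]|$ and $r=2m-g+k-|A|-|(A+A)\cap[0,k]|$. Thus in every block occurring in $\G$ we have $N=\gamma g+O(\delta g)+O(1)$ and $r=(2\gamma-1)g+O(\delta g)+O(1)$, with the $O(1)$ terms bounded in terms of $K$; writing the ratio as $\prod_{i=0}^{a-1}\frac{r-i}{N-i}\cdot\prod_{j=0}^{b-1}\frac{N-r-j}{N-a-j}$ and using the identity $2-\frac{1}{\gamma}=p$, each factor of the first product is $p+O(\delta)+o_g(1)$ and each factor of the second is $(1-p)+O(\delta)+o_g(1)$, uniformly over all blocks, so
\[
\frac{\binom{N-a-b}{r-a}}{\binom{N}{r}}=p^a(1-p)^b+O(\delta)+o_g(1),
\]
with implied constants depending only on $l$ and $K$. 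Summing this over the blocks, weighted by their sizes, and using that the blocks making up $\G$ have total size $(1-O(\eta))N(g)$ while $\SS_g\setminus\G$ contributes an error of at most $\eta N(g)$, I would obtain
\[
\P_g[C\subseteq S \text{ and } C'\cap S=\emptyset]=p^a(1-p)^b+O(\eta)+O(\delta)+o_g(1).
\]
Choosing $\eta$, then $\delta$, sufficiently small in terms of $\epsilon_2$ and $l$, and then $g$ larger than a threshold depending on $\epsilon_1,\epsilon_2,l_1,l_2$ (which also absorbs all earlier ``for large $g$'' requirements and which we name $M(\epsilon_1,\epsilon_2)$) gives the claim.

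I expect the main obstacle to be precisely the verification underlying the last two paragraphs: one must check carefully that on $\G$ the three intervals behave exactly as the step function $f_1$ predicts -- low elements forced to be gaps, high elements forced to be present, middle elements genuinely free and, in the $\cC$ blocks, unconstrained by the auxiliary datum $A$ -- and that the hypergeometric ratio $\binom{N-a-b}{r-a}/\binom{N}{r}$ converges to $p^a(1-p)^b$ uniformly over the whole family of blocks $\B(g,m)$ and $\cC(m,k,A,g)$ that make up $\G$, not merely for one fixed multiplicity.
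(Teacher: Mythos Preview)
Your argument is correct, and it takes a genuinely different route from the paper's. The paper proceeds in two layers: it first proves Theorem~\ref{prob C subset S} (the case $C'=\emptyset$), whose core is Lemma~\ref{phi -l}, and then deduces the general statement by inclusion--exclusion over the subsets $B\subseteq C'$, writing $\P_g[C\subseteq S,\ C'\cap S=\emptyset]=\sum_{B\subseteq C'}(-1)^{|B|}\P_g[C\cup B\subseteq S]$. The proof of Lemma~\ref{phi -l} in turn evaluates the sums $\sum_m\binom{m-1-l}{g-m+1}$ (and their $\cC$-block analogues) via Lemma~\ref{lemma:binom} and identifies the result using Zhai's summation formula, Theorem~\ref{Zhai summation of c}. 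By contrast, you treat $C$ and $C'$ simultaneously inside each block of $\G$ via the hypergeometric ratio $\binom{N-a-b}{r-a}/\binom{N}{r}$, observe that $r/N\to 2-1/\gamma=\phi^{-1}$ uniformly over the blocks of $\G$, and then simply use that the blocks of $\G$ have total size $(1-O(\eta))N(g)$. This avoids both the inclusion--exclusion step and the two technical inputs Lemma~\ref{lemma:binom} and Theorem~\ref{Zhai summation of c}; all you need from the literature is Theorem~\ref{Kaplan Mult} and Proposition~\ref{F 2m}. The paper's route buys modularity---Theorem~\ref{prob C subset S} is isolated as a statement of independent interest---whereas yours buys economy: fewer auxiliary lemmas and no alternating sums. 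Your closing worry about uniformity is well placed but already resolved by your own estimates, since $N=\gamma g+O(\delta g)+O_K(1)$ and $r=(2\gamma-1)g+O(\delta g)+O_K(1)$ hold for every block in $\G$; likewise, your check that middle elements $n$ satisfy $m+k<n<2m$ in a $\cC$-block, and hence fall in $[m+k+1,2m+k-1]\setminus(2m+A+A)$, is exactly what is needed to confirm that $n\in S_{m,A,B}$ is equivalent to $n\in B$.
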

In the next section we apply two special cases of this result.
\begin{cor}\label{cor:prob subset}
Fix $\epsilon_1, \epsilon_2 > 0$.
\begin{enumerate}
\item There exists an $M(\epsilon_1, \epsilon_2)>0$ such that for all $g>M(\epsilon_1, \epsilon_2)$ and
\[
n \in \big[1,(\gamma-\epsilon_1)g\big) \cup \big((\gamma+\epsilon_1)g,(2\gamma-\epsilon_1)g\big) \cup \big((2\gamma+\epsilon_1)g,2g\big]
\]
we have
\[
\left|\P_g[n \in S]-f_{1}\left(\frac{n}{g}\right)\right|<\epsilon_2.
\]

\item There exists an $M(\epsilon_1, \epsilon_2)>0$ such that for all $g>M(\epsilon_1, \epsilon_2)$ and
\[
i,j \in \big[1,(\gamma-\epsilon_1)g\big) \cup \big((\gamma+\epsilon_1)g,(2\gamma-\epsilon_1)g\big) \cup \big((2\gamma+\epsilon_1)g,2g\big]
\]
with $i\neq j$, we have
\[
\left|\P_g[\{i,j\}\cap S=\emptyset]-\left(1-f_{1}\left(\frac{i}{g}\right)\right)\left(1-f_{1}\left(\frac{j}{g}\right)\right)\right|<\epsilon_2.
\]
\end{enumerate}
\end{cor}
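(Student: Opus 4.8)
The plan is to obtain Corollary~\ref{cor:prob subset} as a direct specialization of Theorem~\ref{prob C in S and C' not in S}; no argument beyond that theorem is required.

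For part (1), fix $\epsilon_1,\epsilon_2>0$ and apply Theorem~\ref{prob C in S and C' not in S} with $l_1=1$ and $l_2=0$, letting $M(\epsilon_1,\epsilon_2)$ be the constant it supplies for this choice of $(l_1,l_2)$. Given $g>M(\epsilon_1,\epsilon_2)$ and $n$ in the union
\[
\big[1,(\gamma-\epsilon_1)g\big) \cup \big((\gamma+\epsilon_1)g,(2\gamma-\epsilon_1)g\big) \cup \big((2\gamma+\epsilon_1)g,2g\big],
\]
set $C=\{n\}$ and $C'=\emptyset$. These satisfy $|C|=l_1$, $|C'|=l_2$, $C\cap C'=\emptyset$, and $C$ is contained in the displayed union; note also that membership in this union forces $n/g\neq\gamma,2\gamma$, so $f_1(n/g)$ is defined. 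Since $\{n\}\subseteq S$ is the same as $n\in S$, since $\emptyset\cap S=\emptyset$ holds for every $S$, and since an empty product equals $1$, the estimate furnished by the theorem is precisely $\left|\P_g[n\in S]-f_1\!\left(\frac{n}{g}\right)\right|<\epsilon_2$.

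For part (2), apply the theorem instead with $l_1=0$ and $l_2=2$, and let $M(\epsilon_1,\epsilon_2)$ be the corresponding constant. Given such a $g$ and $i\neq j$ both in the same union, set $C=\emptyset$ and $C'=\{i,j\}$: the hypothesis $i\neq j$ gives $|C'|=2=l_2$, while $C\cap C'=\emptyset$ and $\emptyset\subseteq S$ are automatic, and again the interval condition makes both $f_1(i/g)$ and $f_1(j/g)$ well defined. The conclusion of the theorem is then exactly $\left|\P_g[\{i,j\}\cap S=\emptyset]-\left(1-f_1\!\left(\frac{i}{g}\right)\right)\left(1-f_1\!\left(\frac{j}{g}\right)\right)\right|<\epsilon_2$.

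Thus the corollary has no real obstacle of its own; all of the difficulty lies in Theorem~\ref{prob C in S and C' not in S}, which must handle arbitrary admissible pairs $(C,C')$ at once. I expect that proof to use Proposition~\ref{F 2m} to discard the semigroups with $F(S)>2m(S)+M$, then Zhao's parametrizations from Section~\ref{sec:F3m} to reduce the event $\{C\subseteq S,\ C'\cap S=\emptyset\}$ to a condition on the defining subset $B$, and finally an inclusion-exclusion plus generating-function computation, in the spirit of Sections~\ref{sec:embedding} and~\ref{sec:type}, to show the resulting count matches $\prod_{n\in C}f_1(n/g)\prod_{n\in C'}(1-f_1(n/g))$ up to an error that is $o(1)$ uniformly in the locations of the elements of $C$ and $C'$.
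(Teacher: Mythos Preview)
Your proof is correct and matches the paper's approach: the corollary is stated immediately after Theorem~\ref{prob C in S and C' not in S} as ``two special cases of this result,'' with no separate proof given, and your specializations $(l_1,l_2)=(1,0)$ and $(l_1,l_2)=(0,2)$ are exactly those cases. Your closing remarks about how Theorem~\ref{prob C in S and C' not in S} itself is proved are broadly in the right spirit but not part of what is needed here.
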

The second statement may be surprising because in general whether two positive integers $i,j$ are contained in a numerical semigroup $S$ are \emph{not} independent events.  For example, if $i \mid j$ then $i\in S$ implies $j \in S$.  The point here is that if $i$ is `too small', $\P_g[i \in S]$ is very small, and if neither of $i,j$ is small, then either they are in $S$ with very high probability, or they are too close together for the condition $i \in S$ to influence $\P_g[j \in S]$ in a meaningful way.

We start the proof of Theorem \ref{prob C in S and C' not in S} by recalling a result of Zhai that was conjectured by Zhao \cite[Conjecture 2]{Zhao}. Recall that the set $A_k$ is defined in Section \ref{sec:F3m}.
\begin{thm}\cite[Theorem 3.11]{Zhai}\label{Zhai summation of c}
The sum
\[
c=\frac{\phi}{\sqrt{5}}+\frac{1}{\sqrt{5}}\sum_{k=1}^{\infty}\sum_{A\in A_k}\phi^{|A|-|(A+A)\cap[0,k]|-k-1}
\]
converges. Moreover
\[
N(g)=c\phi^g+o(\phi^g).
\]
\end{thm}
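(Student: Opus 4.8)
The plan is to stratify $\SS_g$ by the ``depth'' of $S$ and count each stratum. Since $m(S)\in S$ forces $2m(S),3m(S)\in S$, the Frobenius number never equals $2m(S)$ or $3m(S)$, so
\[
\SS_g=\B(g)\ \sqcup\ \cC(g)\ \sqcup\ \{S\in\SS_g:F(S)>3m(S)\},
\]
with $\B(g)$ and $\cC(g)$ as in Section~\ref{sec:F3m}. Rather than bounding the deep stratum directly, I would exploit the identity $N(g)\,\P_g[F<3m]=\#\B(g)+\#\cC(g)$ together with the fact, recalled at the start of Section~\ref{sec:F3m}, that $\P_g[F(S)<3m(S)]\to1$. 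This reduces the theorem to showing $\#\B(g)+\#\cC(g)=c\,\phi^g+o(\phi^g)$, after which $N(g)=(\#\B(g)+\#\cC(g))/\P_g[F<3m]=c\phi^g+o(\phi^g)$ follows with no a priori upper bound on $N(g)$.

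For the depth-two stratum, Proposition~\ref{Zhao F<2m} gives $\#\B(g,m)=\binom{m-1}{2m-g-2}=\binom{m-1}{g-m+1}$, so that
\[
\#\B(g)=\sum_m\binom{m-1}{g-m+1}=\sum_j\binom{g-j}{j}=F_{g+1}
\]
by the diagonal identity $\sum_j\binom{n-j}{j}=F_{n+1}$ (take $j=g-m+1$, so $n=g$). For the depth-three stratum I would refine $\cC(k,g)=\bigsqcup_{A\in A_k}\cC(k,A,g)$. Writing $a=|A|$, $b=|(A+A)\cap[0,k]|$, and using $k\notin A+A$, Proposition~\ref{Zhao F<3m} indexes $\cC(m,k,A,g)$ by the size-$(2m-g+k-a-b)$ subsets of a set of size $m-1-b$, whence
\[
\#\cC(k,A,g)=\sum_m\binom{m-1-b}{g-m-k+a-1}=F_{g+a-b-k-1}
\]
by the same identity (now $n=g+a-b-k-2$), valid once $g$ is large relative to $k$. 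Since $F_n=\tfrac1{\sqrt5}(\phi^n-(1-\phi)^n)$ with $|1-\phi|<1$, for each fixed $(k,A)$ we obtain the termwise limits
\[
\frac{\#\B(g)}{\phi^g}\to\frac{\phi}{\sqrt5},\qquad \frac{\#\cC(k,A,g)}{\phi^g}\to\frac1{\sqrt5}\,\phi^{|A|-|(A+A)\cap[0,k]|-k-1},
\]
which are exactly the terms appearing in the claimed formula for $c$.

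The remaining, decisive step is to upgrade these termwise limits to $\frac1{\phi^g}(\#\B(g)+\#\cC(g))\to c$, which requires both that the series defining $c$ converge and that the tail $\sum_{k\ge K}\#\cC(k,g)$ be $o(\phi^g)$ uniformly in $g$. Setting $d(A)=b-a\ge0$ (nonnegative because $0\in A$ gives $A\subseteq(A+A)\cap[0,k]$), every surviving term is $\tfrac1{\sqrt5}\phi^{-d(A)-k-1}$, so both requirements follow from the single estimate that the weighted sumset count
\[
W_k=\sum_{A\in A_k}\phi^{-d(A)}=\sum_{A\in A_k}\phi^{|A|-|(A+A)\cap[0,k]|}
\]
satisfies $\sum_k\phi^{-k}W_k<\infty$. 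The crude bound $\#A_k\le 2^{k-1}$ only yields $W_k\le 2^{k-1}$, and $\sum_k(2/\phi)^k$ diverges, so this is insufficient; what is needed is that most $A\in A_k$ force $d(A)$ large enough that $W_k$ grows strictly more slowly than $\phi^k$. Proving this geometric control on how the sumset $A+A$ must fill $[0,k]$ — plausibly via a transfer-matrix or generating-function analysis of the admissible sets $A_k$ — is the main obstacle. Granting it, convergence of $c$ is immediate, a uniform-truncation (dominated-convergence) argument interchanges $\lim_g$ with $\sum_k$, and combining with the first paragraph gives $N(g)=c\phi^g+o(\phi^g)$.
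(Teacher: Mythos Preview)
This theorem is not proved in the paper; it is quoted verbatim from Zhai with the citation \cite[Theorem~3.11]{Zhai}, so there is no in-paper proof to compare against. Your outline is structurally the same as Zhai's: stratify by depth, use Zhao's parametrizations to identify each $\#\cC(k,A,g)$ with a diagonal Fibonacci sum, and then justify interchanging $\lim_g$ with $\sum_{k,A}$.

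That said, your proposal is a plan rather than a proof, and the part you explicitly ``grant'' is exactly the substance of the theorem. You reduce everything to the estimate that $W_k=\sum_{A\in A_k}\phi^{|A|-|(A+A)\cap[0,k]|}$ grows strictly more slowly than $\phi^k$, observe correctly that the crude bound $\#A_k\le 2^{k-1}$ is useless here, and then stop. This estimate is the core of Zhai's paper; it is obtained by a careful combinatorial argument controlling how the sumset $(A+A)\cap[0,k]$ must grow, and nothing in your write-up indicates how to carry it out. Without it you have neither convergence of $c$ nor the dominated-convergence step, so the asymptotic $N(g)=c\phi^g+o(\phi^g)$ does not follow.

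There is a second borrowed ingredient worth flagging. You invoke $\P_g[F(S)<3m(S)]\to1$ as a black box from Section~\ref{sec:F3m}, but that statement is itself a major component of Zhai's proof (it requires the bound $\#\{S\in\SS_g:F(S)\ge 3m(S)\}=o(\phi^g)$, which is independent of and comparable in difficulty to the $W_k$ estimate). So your reduction ``no a~priori upper bound on $N(g)$ needed'' is a bit illusory: the hard upper bound has simply been moved into the cited fact. In sum, the skeleton is right, but both load-bearing estimates --- the deep-stratum bound hidden in $\P_g[F<3m]\to1$ and the sumset growth bound on $W_k$ --- remain to be supplied.
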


We next need a technical result about sums of binomial coefficients.  The proof is similar to the proof of \cite[Proposition 7]{KaplanYe}, so we do not include it here.
\begin{lemma}\label{lemma:binom}
Fix $l_1,l_2\in\Z_{\ge 0}$ and $\epsilon>0$. As $g\to\infty$, we have
\[
\sum_{m= \lceil(\gamma-\epsilon)g\rceil}^{\lfloor(\gamma+\epsilon)g\rfloor}\binom{m-1-l_1}{g-m+1-l_2}=\frac{1}{\sqrt{5}}\phi^{g-l_1-l_2+1}+o(\phi^g).
\]
\end{lemma}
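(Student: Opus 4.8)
The plan is to evaluate the \emph{unrestricted} sum over all $m$ in closed form, and then to show that cutting the range down to $m\in[\lceil(\gamma-\epsilon)g\rceil,\lfloor(\gamma+\epsilon)g\rfloor]$ only discards an exponentially smaller quantity. For the first part I would substitute $i=m-l_1-1$, which turns the summand $\binom{m-1-l_1}{g-m+1-l_2}$ into $\binom{i}{(g-l_1-l_2)-i}$, and then invoke the classical identity $\sum_{i\ge 0}\binom{i}{n-i}=F_{n+1}$ (which follows at once from $\binom{i}{n-i}=[x^{n}](x+x^{2})^{i}$ and $\sum_{i\ge 0}(x+x^{2})^{i}=\tfrac{1}{1-x-x^{2}}$). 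This gives
\[
\sum_{m}\binom{m-1-l_1}{g-m+1-l_2}=F_{g-l_1-l_2+1},
\]
and by the Binet formula recalled above Lemma~\ref{lemma:binom} together with $|1-\phi|<1$, the right-hand side is $\tfrac{1}{\sqrt5}\phi^{\,g-l_1-l_2+1}+o(1)=\tfrac{1}{\sqrt5}\phi^{\,g-l_1-l_2+1}+o(\phi^{g})$.

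The substance of the proof is then the tail estimate: I must show that $\sum_{m<(\gamma-\epsilon)g}\binom{m-1-l_1}{g-m+1-l_2}$ and $\sum_{m>(\gamma+\epsilon)g}\binom{m-1-l_1}{g-m+1-l_2}$ are each $o(\phi^{g})$, i.e.\ that the exponential growth rate $\phi$ is produced entirely by the terms with $m$ near $\gamma g$. I would handle this with an exponential tilt. For the lower tail, fix $t\in(0,1]$; since $i\le (\gamma-\epsilon)g-l_1-1$ forces $t^{\,i-((\gamma-\epsilon)g-l_1-1)}\ge 1$, one gets
\[
\sum_{m<(\gamma-\epsilon)g}\binom{m-1-l_1}{g-m+1-l_2}\ \le\ t^{-(\gamma-\epsilon)g+l_1+1}\sum_{i\ge 0}\binom{i}{(g-l_1-l_2)-i}t^{i}
= t^{-(\gamma-\epsilon)g+l_1+1}\,[x^{\,g-l_1-l_2}]\frac{1}{1-tx-tx^{2}}.
\]
The power series $\tfrac{1}{1-tx-tx^{2}}$ has a simple dominant pole at $x=\rho(t):=\tfrac{-1+\sqrt{1+4/t}}{2}$, so its $x^{\,g-l_1-l_2}$-coefficient is $\Theta\!\big(\rho(t)^{-(g-l_1-l_2)}\big)$ and the whole bound is $O\!\big((t^{-(\gamma-\epsilon)}\rho(t)^{-1})^{g}\big)$. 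A short computation shows the base equals $\phi$ at $t=1$ and that $\tfrac{d}{dt}\log\!\big(t^{-(\gamma-\epsilon)}\rho(t)^{-1}\big)\big|_{t=1}=\gamma-(\gamma-\epsilon)=\epsilon>0$, so choosing $t$ slightly below $1$ makes the base strictly less than $\phi$ and the lower tail $o(\phi^{g})$. The upper tail is symmetric, using $t\ge 1$ (where the analogous derivative at $t=1$ equals $-\epsilon<0$). Subtracting the two tails from the closed-form total then yields exactly $\tfrac{1}{\sqrt5}\phi^{\,g-l_1-l_2+1}+o(\phi^{g})$.

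I expect the tail estimate to be the only real obstacle: morally it is the large-deviations statement that a binomial coefficient $\binom{\alpha g}{(1-\alpha)g}$ with $\alpha$ bounded away from $\gamma$ is $o(\phi^{g})$ — one could alternatively see this from the Stirling/entropy expansion $\binom{m-1-l_1}{g-m+1-l_2}=e^{\,g\psi(m/g)+o(g)}$ with $\psi(\alpha)=\alpha\ln\alpha-(1-\alpha)\ln(1-\alpha)-(2\alpha-1)\ln(2\alpha-1)$, whose derivative $\psi'(\alpha)=\log\tfrac{\alpha(1-\alpha)}{(2\alpha-1)^{2}}$ is strictly decreasing on $(\tfrac12,1)$ and vanishes precisely at the root $\gamma$ of $5\alpha^{2}-5\alpha+1$, so that $\psi$ is strictly concave with maximum $\psi(\gamma)=\ln\phi$. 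The generating-function tilt above is the cleaner route because it is exact, needs no care about uniformity of the Stirling error, and lets the bounded shifts $l_1,l_2$ be absorbed into harmless constant factors; everything else is index bookkeeping plus the Fibonacci identity and Binet's formula. This is indeed the same mechanism as in the proof of \cite[Proposition 7]{KaplanYe}.
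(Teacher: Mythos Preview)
Your argument is correct. The paper does not actually include a proof of this lemma, merely stating that ``the proof is similar to the proof of \cite[Proposition 7]{KaplanYe}, so we do not include it here,'' so there is nothing in the text to compare against directly. Your two-step strategy---evaluating the unrestricted sum exactly as $F_{g-l_1-l_2+1}$ via the substitution $i=m-1-l_1$ and the identity $\sum_i\binom{i}{n-i}=F_{n+1}$, then bounding the tails by an exponential tilt on the generating function $\tfrac{1}{1-tx-tx^2}$ with the derivative computation at $t=1$ yielding $\pm\epsilon$---is a clean and standard way to prove such a concentration statement, and is in the spirit of the referenced result; the Stirling/entropy alternative you sketch, with $\psi$ strictly concave and maximized at $\gamma$ with value $\ln\phi$, would also work.
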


A main step in the proof of Theorem \ref{prob C in S and C' not in S} is to study elements between the expected size of the multiplicity and the expected size of the Frobenius number of a random $S\in \SS_g$.
\begin{lemma}\label{phi -l}
Fix $l\geq 0$ and $\epsilon_1,\epsilon_2>0$. There is an $M(\epsilon_1,\epsilon_2)>0$ such that for all $g>M(\epsilon_1,\epsilon_2)$ and all subsets $C\subseteq \big((\gamma+\epsilon_1)g,(2\gamma-\epsilon_1)g\big)$ with $|C|=l$, we have
\[
\left|\P_g[C\subseteq S]-\phi^{-l}\right|<\epsilon_2.
\]
\end{lemma}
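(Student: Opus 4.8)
The strategy is to reduce the computation of $\P_g[C\subseteq S]$ to a count over semigroups with $F(S)<2m(S)$, i.e.\ over $\B(g)$, using the fact that by Theorem \ref{thm_zhai} and Theorem \ref{Zhai summation of c} the semigroups in $\B(g)$ together with those in each $\cC(k,g)$ for $k$ up to a large constant account for all but an $\epsilon$-fraction of $\SS_g$. Since $C\subseteq\big((\gamma+\epsilon_1)g,(2\gamma-\epsilon_1)g\big)$, every $n\in C$ satisfies $m(S)<n<2m(S)$ whenever $m(S)\in\big[(\gamma-\epsilon_1/2)g,(\gamma+\epsilon_1/2)g\big]$, which by Theorem \ref{Kaplan Mult}(1) is where almost all the mass of $\SS_g$ lies. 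So first I would fix a large constant $K$ (to be chosen depending on $\epsilon_2$ via Proposition \ref{F 2m}), write
\[
\P_g[C\subseteq S]=\frac{1}{N(g)}\Bigg(\sum_{\substack{S\in\B(g)\\ C\subseteq S}}1+\sum_{k=1}^{K}\sum_{\substack{S\in\cC(k,g)\\ C\subseteq S}}1+\sum_{\substack{S\in\SS_g,\ F(S)>2m(S)+K\\ C\subseteq S}}1\Bigg),
\]
and bound the last sum by $\epsilon\, N(g)$ using Proposition \ref{F 2m}. Dividing by $N(g)=c\phi^g+o(\phi^g)$, the contribution of that tail is $O(\epsilon)$, so it suffices to estimate the $\B(g)$ sum and the finitely many $\cC(k,g)$ sums and show their total, divided by $N(g)$, is within $\epsilon_2/2$ of $\phi^{-l}$.

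\textbf{The main count over $\B(g,m)$.} By Proposition \ref{Zhao F<2m}, semigroups in $\B(g,m)$ correspond to subsets $B\subseteq\{1,\dots,m-1\}$ of size $2m-g-2$, with $S_{m,B}=(m+B)\cup\{0,m,2m\to\}$. For $n$ with $m<n<2m$, we have $n\in S_{m,B}$ iff $n-m\in B$. Thus for a fixed $m$ in the range where all elements of $C$ lie strictly between $m$ and $2m$, the number of $S\in\B(g,m)$ with $C\subseteq S$ is the number of subsets $B$ of $\{1,\dots,m-1\}$ of size $2m-g-2$ containing the $l$-element set $\{n-m:n\in C\}$, which is $\binom{m-1-l}{2m-g-2-l}=\binom{m-1-l}{g-m+1}$. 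Summing over the relevant $m$ and invoking Lemma \ref{lemma:binom} with $l_1=l$, $l_2=0$ gives
\[
\sum_{\substack{S\in\B(g)\\ C\subseteq S}}1=\sum_{m}\binom{m-1-l}{g-m+1}+(\text{error})=\frac{1}{\sqrt5}\phi^{g-l+1}+o(\phi^g),
\]
where the error term collects (i) the $m$ outside $\big[(\gamma-\epsilon)g,(\gamma+\epsilon)g\big]$, controlled by Theorem \ref{Kaplan Mult}(1), (ii) the finitely many $m$ near the boundary for which some $n\in C$ fails $m<n<2m$ — but since $C\subseteq\big((\gamma+\epsilon_1)g,(2\gamma-\epsilon_1)g\big)$, once $m\in\big[(\gamma-\epsilon_1/2)g,(\gamma+\epsilon_1/2)g\big]$ this inclusion is automatic, and the leftover $m$ contribute $o(\phi^g)$. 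Dividing by $N(g)=c\phi^g+o(\phi^g)$ and recalling from Theorem \ref{Zhai summation of c} that $c=\frac{\phi}{\sqrt5}+\frac{1}{\sqrt5}\sum_{k\ge1}\sum_{A\in A_k}\phi^{|A|-|(A+A)\cap[0,k]|-k-1}$, we get that the $\B(g)$-term contributes $\frac{\phi^{-l+1}/\sqrt5}{c}+o(1)$ to $\P_g[C\subseteq S]$.

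\textbf{The $\cC(k,g)$ terms and assembling the constant.} For each fixed $k\le K$ I would run the analogous count using Proposition \ref{Zhao F<3m}: semigroups in $\cC(m,k,A,g)$ correspond to subsets $B\subseteq[m+k+1,2m+k-1]\setminus(2m+A+A)$ of a prescribed size, and for $n$ strictly between $m$ and $2m$, membership $n\in S$ is governed either by $n-m\in A$ (if $m<n\le m+k$) or is automatic in the relevant window; one again reduces to a binomial sum handled by a variant of Lemma \ref{lemma:binom}, picking up a factor $\phi^{|A|-|(A+A)\cap[0,k]|-k-1}$ times $\frac{1}{\sqrt5}\phi^{g-l+1}$ for each $A\in A_k$ — the same weight that appears in the formula for $c$, but now carrying an extra $\phi^{-l}$. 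Summing over $k=1,\dots,K$ and all $A\in A_k$, dividing by $N(g)$, and combining with the $\B(g)$ contribution, the numerator becomes $\phi^{-l}\cdot\phi\cdot\frac{1}{\sqrt5}\big(1+\sum_{k=1}^{K}\sum_{A\in A_k}\phi^{|A|-|(A+A)\cap[0,k]|-k-1}\big)$, which is $\phi^{-l}$ times a partial sum of $c$; as $K\to\infty$ this converges to $\phi^{-l}c$, so the quotient tends to $\phi^{-l}$. Choosing $K$ large enough that the tail $\sum_{k>K}$ of the series for $c$ is $<\epsilon_2/4$ and $g$ large enough to absorb the $o(\phi^g)$ errors and the Proposition \ref{F 2m} tail into another $\epsilon_2/4$ finishes the proof. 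The hard part is bookkeeping the error terms uniformly over all admissible $C$ of size $l$: the key point making this uniform is that the binomial estimate $\binom{m-1-l}{g-m+1}$ does not depend on which $l$-subset $C$ is, only on $|C|=l$, so the single application of Lemma \ref{lemma:binom} suffices for every $C$ simultaneously — and the only place $C$'s location enters is in guaranteeing $m<n<2m$ (resp.\ the analogous condition in $\cC(k,g)$), which the hypothesis $C\subseteq\big((\gamma+\epsilon_1)g,(2\gamma-\epsilon_1)g\big)$ secures for all $m$ within $\epsilon_1/2$ of $\gamma g$.
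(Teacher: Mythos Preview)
Your strategy is exactly the paper's: split $\SS_g$ into $\B(g)$, the sets $\cC(k,g)$ for $k\le K$, and a tail controlled by Proposition~\ref{F 2m} and Theorem~\ref{Kaplan Mult}; reduce each piece to a binomial count via Propositions~\ref{Zhao F<2m} and~\ref{Zhao F<3m}; evaluate with Lemma~\ref{lemma:binom}; and compare to the series for $c$ in Theorem~\ref{Zhai summation of c}. The uniformity-in-$C$ observation at the end is also the right one.

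Two points need correction. First, in the $\cC(m,k,A,g)$ analysis, membership of $n\in(m+k,2m)$ in $S$ is \emph{not} ``automatic'': it is governed by the free subset $B$. What the paper actually uses (and you should make explicit) is that for $m\in\big((\gamma-\tfrac{\epsilon_1}{2})g,(\gamma+\tfrac{\epsilon_1}{2})g\big)$ and $k\le K$ fixed, one has $m+k<(\gamma+\epsilon_1)g<\min(C)$ and $\max(C)<(2\gamma-\epsilon_1)g<2m$ for $g$ large, so \emph{all} of $C$ lands in the $B$-window and the count becomes the single binomial $\binom{m-1-|(A+A)\cap[0,k]|-l}{g-m+|A|-k-1}$; the case $n-m\in A$ never arises. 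Second, your arithmetic slips by a factor of $\phi$: applying Lemma~\ref{lemma:binom} with $l_1=|(A+A)\cap[0,k]|+l$ and $l_2=k+2-|A|$ yields $\tfrac{1}{\sqrt5}\phi^{\,g-l+|A|-|(A+A)\cap[0,k]|-k-1}$, not $\tfrac{1}{\sqrt5}\phi^{g-l+1}\cdot\phi^{|A|-|(A+A)\cap[0,k]|-k-1}$. Consequently the total (after dividing by $\phi^g$) is $\phi^{-l}\big(\tfrac{\phi}{\sqrt5}+\tfrac{1}{\sqrt5}\sum_{k=1}^{K}\sum_{A\in A_k}\phi^{|A|-|(A+A)\cap[0,k]|-k-1}\big)$, which is genuinely $\phi^{-l}$ times a partial sum of $c$. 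Your displayed expression $\phi^{-l}\cdot\phi\cdot\tfrac{1}{\sqrt5}\big(1+\sum\cdots\big)$ is not a partial sum of $c$ (it would require $\phi=1$), so as written the quotient would not converge to $\phi^{-l}$.
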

\begin{proof}
We separately prove that for sufficiently large $g$, we have $\P_g[C\subseteq S]<\phi^{-l} + \epsilon_2$ and $\P_g[C\subseteq S]> \phi^{-l} - \epsilon_2$.  We first phrase this in terms of the sets $\B(g,m)$ and $A_k$ from~Section~\ref{sec:F3m}.

Fix $m\in \big((\gamma-\frac{\epsilon_1}{2})g,(\gamma+\frac{\epsilon_1}{2})g\big)$, so $m<\min(C)\leq \max(C)<2m$.  Proposition \ref{Zhao F<2m} implies~that
\[
\#\{S\in\B(g,m)\mid C\subseteq S\}=\binom{m-1-l}{2m-g-2-l}=\binom{m-1-l}{g-m+1}.
\]
Next, for $k\leq \min(\frac{g}{3},\frac{\epsilon_1}{2}g)$ and $A\in A_k$ we have $m+k<\min(C)\leq \max(C)<2m$ and
\begin{equation*}
    \begin{split}
    \#\{S\in \cC(m,k,A,g)\mid C\subseteq S\} &=\binom{m-1-|(A+A)\cap [0,k]|-l}{2m-g+k-|A|-|(A+A)\cap [0,k]|-l}\\ &=\binom{m-1-|(A+A)\cap [0,k]|-l}{g-m+|A|-k-1}.
    \end{split}
\end{equation*}
We start with the lower bound for $\P_g[C\subseteq S]$.
By Theorem \ref{Zhai summation of c}, we can choose $M$ sufficiently large so that
\[
\frac{\phi}{\sqrt{5}}+\frac{1}{\sqrt{5}}\sum_{k=1}^{M}\sum_{A\in A_k}\phi^{|A|-|(A+A)\cap[0,k]|-k-1} >c\left(1-\frac{\epsilon_2}{2}\phi^{-l}\right).
\]
For $g>\max(3M,\frac{2}{\epsilon_1}M)$, we have
\begin{eqnarray*}
\P_g[C\subseteq S]
        &\geq  & \frac{1}{N(g)}\sum_{m= \lceil(\gamma-\frac{\epsilon_1}{2})g\rceil}^{\lfloor(\gamma+\frac{\epsilon_1}{2})g\rfloor}\binom{m-1-l}{g-m+1} \\
& &+\frac{1}{N(g)}\sum_{k=1}^{M}\sum_{A\in A_k}\sum_{m= \lceil(\gamma-\frac{\epsilon_1}{2})g\rceil}^{\lfloor(\gamma+\frac{\epsilon_1}{2})g\rfloor}\binom{m-1-|(A+A)\cap [0,k]|-l}{g-m+|A|-k-1}.
\end{eqnarray*}
Applying Lemma \ref{lemma:binom} with $l_1 = l$ and $l_2 = 0$ shows that
\[
\sum_{m= \lceil(\gamma-\frac{\epsilon_1}{2})g\rceil}^{\lfloor(\gamma+\frac{\epsilon_1}{2})g\rfloor}\binom{m-1-l}{g-m+1} 
=\frac{1}{\sqrt{5}}\phi^{g+1-l}+o(\phi^g).
\]
Applying Lemma \ref{lemma:binom} with $l_1 = |(A+A)\cap [0,k]| + l$ and $l_2 = k+2-|A|$ shows that
\[
\sum_{m= \lceil(\gamma-\frac{\epsilon_1}{2})g\rceil}^{\lfloor(\gamma+\frac{\epsilon_1}{2})g\rfloor}\binom{m-1-|(A+A)\cap [0,k]|-l}{g-m+|A|-k-1}
=
\frac{1}{\sqrt{5}}\phi^{g-l+|A|-|(A+A)\cap[0,k]|-k-1}+o(\phi^g).
\]
We now have
\begin{eqnarray*}
\P_g[C\subseteq S] &\ge&
\phi^{-l}\frac{\phi^g}{N(g)}
       \left(\frac{\phi}{\sqrt{5}}+\frac{1}{\sqrt{5}}\sum_{k=1}^{M}\sum_{A\in A_k}\phi^{|A|-|(A+A)\cap[0,k]|-k-1}\right) +o(1)\\
    &>& \phi^{-l} \left(\frac{1}{c}+o(1)\right)c\left(1-\frac{\epsilon_2}{2}\phi^{-l}\right)+o(1)\\
    &=&\phi^{-l}-\frac{\epsilon_2}{2}+o(1).   
\end{eqnarray*}
Therefore, for sufficiently large $g$, all subsets $C\subseteq \big((\gamma+\epsilon_1)g,(2\gamma-\epsilon_1)g\big)$ with $|C|=l$ satisfy $\P_g[C\subseteq S]>\phi^{-l}-\epsilon_2$.

We now turn to the upper bound for $\P_g[C\subseteq S]$.  For $g>\max(3M,\frac{2}{\epsilon_1}M)$, we have
\begin{eqnarray*}
    \P_g[C\subseteq S]
    & \leq & \frac{1}{N(g)}\sum_{m= \lceil(\gamma-\frac{\epsilon_1}{2})g\rceil}^{\lfloor(\gamma+\frac{\epsilon_1}{2})g\rfloor}\binom{m-1-l}{g-m+1}\\
    && +\frac{1}{N(g)}\sum_{k=1}^{M}\sum_{A\in A_k}\sum_{m= \lceil(\gamma-\frac{\epsilon_1}{2})g\rceil}^{\lfloor(\gamma+\frac{\epsilon_1}{2})g\rfloor}\binom{m-1-|(A+A)\cap [0,k]|-l}{g-m+|A|-k-1}\\
    &&+\P_g\left[|m(S)-\gamma g|>\frac{\epsilon_1}{2}g\right]
    +\P_g\left[|F(S)-2m(S)|>M\right].
\end{eqnarray*}
As we analyzed above, the sum of the first two terms is
\begin{eqnarray*}
\phi^{-l}\frac{\phi^g}{N(g)}
       \left(\frac{\phi}{\sqrt{5}}+\frac{1}{\sqrt{5}}\sum_{k=1}^{M}\sum_{A\in A_k}\phi^{|A|-|(A+A)\cap[0,k]|-k-1}\right) +o(1)
       & < & \phi^{-l}\left(\frac{1}{c}+o(1)\right)c+o(1)\\ 
       & =& \phi^{-l}+o(1).
\end{eqnarray*}
Applying Theorem \ref{Kaplan Mult} and Proposition \ref{F 2m} shows that for all sufficiently large $g$, we have 
\[
\P_g\left[|m(S)-\gamma g|>\frac{\epsilon_1}{2}g\right]
    +\P_g\left[|F(S)-2m(S)|>M\right] \le \frac{\epsilon_2}{2}.
\]
We conclude that for sufficiently large $g$, all subsets $C\subseteq \big((\gamma+\epsilon_1)g,(2\gamma-\epsilon_1)g\big)$ with $|C|=l$ satisfy $\P_g[C\subseteq S]<\phi^{-l}+\epsilon_2$.
\end{proof}

We now prove a result about the probability that a set is contained in a random~$S\in \SS_g$.
\begin{thm}\label{prob C subset S}
Fix $l\geq 0$ and $\epsilon_1, \epsilon_2 > 0$. There exists an $M(\epsilon_1, \epsilon_2)>0$ such that for all $g>M(\epsilon_1, \epsilon_2)$ and all subsets
\[
C\subseteq \big[1,(\gamma-\epsilon_1)g\big) \cup \big((\gamma+\epsilon_1)g,(2\gamma-\epsilon_1)g\big) \cup \big((2\gamma+\epsilon_1)g,2g\big]
\]
of size $|C|=l$, we have
\[
\left|\P_g[C\subseteq S]-\prod_{n\in C}f_{1}\left(\frac{n}{g}\right)\right|<\epsilon_2.
\]
\end{thm}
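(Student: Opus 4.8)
The plan is to partition $C$ according to the three intervals to which its elements are confined, reduce the ``middle'' piece to Lemma \ref{phi -l}, and use the concentration of $m(S)$ and $F(S)$ from Theorem \ref{Kaplan Mult} to dispose of the rest. Write $C = C_1 \sqcup C_2 \sqcup C_3$ with
\[
C_1 = C\cap\big[1,(\gamma-\epsilon_1)g\big),\qquad C_2 = C\cap\big((\gamma+\epsilon_1)g,(2\gamma-\epsilon_1)g\big),\qquad C_3 = C\cap\big((2\gamma+\epsilon_1)g,2g\big],
\]
so that $|C_1|+|C_2|+|C_3|=l$. Since $\frac{\sqrt5-1}{2}=\phi^{-1}$, the quantity we must approximate is $\prod_{n\in C}f_1(n/g)=0$ when $C_1\neq\emptyset$ and $\prod_{n\in C}f_1(n/g)=\phi^{-|C_2|}$ when $C_1=\emptyset$.

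First I would dispose of the case $C_1\neq\emptyset$. Picking any $n\in C_1$, we have $n<(\gamma-\epsilon_1)g$, and $n\in S$ forces $m(S)\le n<(\gamma-\epsilon_1)g$; hence $\P_g[C\subseteq S]\le\P_g[m(S)<(\gamma-\epsilon_1)g]$, which tends to $0$ by Theorem \ref{Kaplan Mult}(1), uniformly in $C$. Since the target product vanishes in this case, the claimed inequality holds for all sufficiently large $g$.

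Next I would assume $C_1=\emptyset$, so $C=C_2\sqcup C_3$ and the goal is $\P_g[C\subseteq S]=\phi^{-|C_2|}+o(1)$ uniformly. Since $C_2\subseteq\big((\gamma+\epsilon_1)g,(2\gamma-\epsilon_1)g\big)$ has size at most $l$, applying Lemma \ref{phi -l} with error parameter $\epsilon_2/4$ for each possible cardinality $l'\in\{0,1,\dots,l\}$ and keeping the largest of the finitely many resulting thresholds gives $\big|\P_g[C_2\subseteq S]-\phi^{-|C_2|}\big|<\epsilon_2/4$ for $g$ large. For the elements of $C_3$, note that $n>(2\gamma+\epsilon_1)g$ implies that $n\notin S$ forces $F(S)\ge n>(2\gamma+\epsilon_1)g$, so using the concentration of $F(S)$ near $2\gamma g$ (the consequence of Theorem \ref{Kaplan Mult} recorded immediately after its statement),
\[
\P_g[C_3\not\subseteq S]\le\sum_{n\in C_3}\P_g[n\notin S]\le l\cdot\P_g\big[F(S)>(2\gamma+\epsilon_1)g\big]<\epsilon_2/4
\]
for $g$ large, uniformly in $C$. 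Since $\{C\subseteq S\}=\{C_2\subseteq S\}\cap\{C_3\subseteq S\}$, we obtain $\P_g[C\subseteq S]\le\P_g[C_2\subseteq S]<\phi^{-|C_2|}+\epsilon_2/4$ and $\P_g[C\subseteq S]\ge\P_g[C_2\subseteq S]-\P_g[C_3\not\subseteq S]>\phi^{-|C_2|}-\epsilon_2/2$, hence $\big|\P_g[C\subseteq S]-\phi^{-|C_2|}\big|<\epsilon_2$. Taking $M(\epsilon_1,\epsilon_2)$ to be the maximum of all the thresholds encountered above completes the argument.

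All of the real content is contained in Lemma \ref{phi -l}; the only point requiring care is the uniformity over all admissible subsets $C$, and this is not a genuine obstacle. The small-element and large-element estimates depend only on $g$, not on $C$, while Lemma \ref{phi -l} is already uniform over middle subsets of a fixed size, so it suffices to invoke it once for each size $l'\le l$ and retain the worst of these finitely many thresholds.
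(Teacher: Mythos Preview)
Your proof is correct and follows essentially the same route as the paper: split $C$ into $C_1,C_2,C_3$, kill the $C_1\neq\emptyset$ case via concentration of $m(S)$, handle $C_2$ by Lemma \ref{phi -l}, and absorb $C_3$ using concentration of $F(S)$. The only cosmetic difference is that for $C_3$ the paper observes directly that $C_3\not\subseteq S$ forces $F(S)>(2\gamma+\epsilon_1)g$ (no union bound, no factor of $l$), but your version works just as well.
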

\begin{proof}
By Theorem $\ref{Kaplan Mult}$, there is $M_1(\epsilon_1,\epsilon_2)$ such that $g> M_1(\epsilon_1,\epsilon_2)$ implies that
\begin{eqnarray*}
\P_g[m(S)  \leq (\gamma-\epsilon_1)g] &<& \epsilon_2, \\
\P_g[F(S) \geq  (2\gamma+\epsilon_1)g] &<& \frac{\epsilon_2}{2}.
\end{eqnarray*}
Lemma \ref{phi -l} implies that there is $M_2(\epsilon_1, \epsilon_2) >0$ such that for all $g>M_2(\epsilon_1, \epsilon_2)$ and all subsets
\[
C'\subseteq \big((\gamma+\epsilon_1)g,(2\gamma-\epsilon_1)g\big),
\]
of size $|C'|\leq l$, we have
\[
\left|\P_g[C'\subseteq S]-\phi^{-|C'|}\right|<\frac{\epsilon_2}{2}.
\]
Let $M=\max(M_1(\epsilon_1,\epsilon_2),M_2(\epsilon_1, \epsilon_2))$. Pick $g>M$ and a subset
\[
C\subseteq \big[1,(\gamma-\epsilon_1)g\big) \cup \big((\gamma+\epsilon_1)g,(2\gamma-\epsilon_1)g\big) \cup \big((2\gamma+\epsilon_1)g,2g\big]
\]
of size $|C|=l$. We split $C$ into three parts: 
\begin{eqnarray*}
C_1 & = & C\cap \big[1,(\gamma-\epsilon_1)g\big),\\ 
C_2 &= & C\cap \big((\gamma+\epsilon_1)g,(2\gamma-\epsilon_1)g\big),\\ 
C_3 &= & C\cap \big((2\gamma+\epsilon_1)g,2g\big].
\end{eqnarray*}
We consider two cases.
\begin{itemize}
    \item Case 1: $C_1\neq\emptyset$. Then $\prod_{n\in C}f_{1}\left(\frac{n}{g}\right)=0$. Moreover,
    \[
    \P_g[C\subseteq S]\leq \P_g[m(S)<(\gamma-\epsilon_1)g]<\epsilon_2.
    \]
    
    \item Case 2: $C_1=\emptyset$. Suppose $|C_2|=l_1\leq l$, so $\prod_{n\in C}f_{1}\left(\frac{n}{g}\right)=\phi^{-l_1}$. Now
    \[
    \P_g[C\subseteq S]\leq \P_g[C_2\subseteq S]< \phi^{-l_1}+\frac{\epsilon_2}{2}.
    \]
We have
    \[
    \P_g[C\subseteq S] \geq \P_g[C_2\subseteq S]-\P_g[C_3 \not\subset S].
    \]
Now if $C_3=\emptyset$, then $\P_g[C_3\not\subset S]=0$ and if $C_3\neq\emptyset$, then
$$\P_g[C_3\not\subset S]\leq P_g\Big[F(S)\geq (2\gamma+\epsilon_1)g\Big]<\frac{\epsilon_2}{2}.$$
Therefore, we have 
    \[
    \P_g[C\subseteq S]  >\phi^{-l_1}-\frac{\epsilon_2}{2}-\frac{\epsilon_2}{2}.
    \]
    We conclude that
    \[
    \left|\P_g[C\subseteq S]-\phi^{-l_1}\right|<\epsilon_2.\qedhere
    \]
\end{itemize}
\end{proof}

We now have all the tools to prove the main result of this section.
\begin{proof}[Proof of Theorem \ref{prob C in S and C' not in S}]
By Theorem \ref{prob C subset S}, there exists an $M(\epsilon_1,\epsilon_2)>0$ such that for all $g>M(\epsilon_1,\epsilon_2)$ and all subsets
\[
D\subseteq \big[1,(\gamma-\epsilon_1)g\big) \cup \big((\gamma+\epsilon_1)g,(2\gamma-\epsilon_1)g\big) \cup \big((2\gamma+\epsilon_1)g,2g\big]
\]
of size $|D|\leq l_1+l_2$, we have
\[
\left|\P_g[D\subseteq S]-\prod_{n\in D}f_{1}\left(\frac{n}{g}\right)\right|<\frac{\epsilon_2}{2^{l_2}}.
\]
Consider $g>M(\epsilon_1,\epsilon_2)$ and a pair of subsets
\[
C,C'\subseteq \big[1,(\gamma-\epsilon_1)g\big) \cup \big((\gamma+\epsilon_1)g,(2\gamma-\epsilon_1)g\big) \cup \big((2\gamma+\epsilon_1)g,2g\big]
\]
with $|C|=l_1,\ |C'|=l_2$, and $C\cap C'=\emptyset$.  By inclusion-exclusion, we know that
\[
\P_g[C\subseteq S \text{ and } C'\cap S=\emptyset] =\sum_{B\subseteq C'}(-1)^{|B|} \P_g[(C\cup B)\subseteq S].
\]
We see that
\begin{eqnarray*}
  &  &\left|\P_g[C\subseteq S \text{ and } C'\cap S=\emptyset]
    -\prod_{n\in C}f_{1}\left(\frac{n}{g}\right)\prod_{n\in C'}\left(1-f_{1}\left(\frac{n}{g}\right)\right)\right|\\
    &= &\left|\sum_{B\subseteq C'}(-1)^{|B|} \P_g[(C\cup B)\subseteq S]
    -\sum_{B\subseteq C'}(-1)^{|B|}\prod_{n\in C\cup B}f_{1}\left(\frac{n}{g}\right)\right|\\
    &\leq & \sum_{B\subseteq C'} \left|\P_g[(C\cup B)\subseteq S]-\prod_{n\in C\cup B}f_{1}\left(\frac{n}{g}\right)\right|\\
    &< & \sum_{B\subseteq C'}\frac{\epsilon_2}{2^{l_2}} =\epsilon_2,
\end{eqnarray*}
where in the last step we applied Theorem \ref{prob C subset S} with $D = C\cup B$. 
\end{proof}

\section{The weight of a typical numerical semigroup}\label{Sec: weight}

We first determine the expected value of the weight of a numerical semigroup of genus $g$.
\begin{proof}[Proof of Theorem \ref{Weight result}(1)]
Let $\alpha(S)=\sum_{x\in\Hs(S)}x$. So for $S\in S_g$ we have $w(S)=\alpha(S)-\frac{g(g+1)}{2}$.  We will prove that
\begin{equation}\label{avg_alpha}
\lim_{g\to\infty}\frac{1}{g^2}\E_g[\alpha(S)]=\frac{9+\sqrt{5}}{20}.
\end{equation}
Assuming this for now, noting that $\frac{1}{10\phi}=\frac{9+\sqrt{5}}{20}-\frac{1}{2}$ completes the proof.

Our goal is now to prove \eqref{avg_alpha}. Every numerical semigroup $S$ satisfies $F(S) \le 2g(S)-1$. By linearity of expectation, we know that
\begin{eqnarray*}
\E_g[\alpha(S)] & = & \sum_{n=1}^{2g-1}n\P_g[n\notin S] \\
    & = & \sum_{n=1}^{2g-1}n\left(1-f_1\left(\frac{n}{g}\right)\right)
    +\sum_{n=1}^{2g-1}n\left(f_1\left(\frac{n}{g}\right)-\P_g[n\in S]\right).
\end{eqnarray*}
We have
\[
\sum_{n=1}^{2g-1}n\left(1-f_1\left(\frac{n}{g}\right)\right)
=
\frac{(\gamma g)^2}{2}+(1-\phi^{-1})\left(\frac{(2\gamma g)^2}{2}-\frac{(\gamma g)^2}{2}\right)+O(g).
\]
Applying Corollary \ref{cor:prob subset}(1) gives an $M(\epsilon_1,\epsilon_2)$ such that for all $g> M(\epsilon_1,\epsilon_2)$, we have
\begin{equation*}
    \begin{split}
    \left|\sum_{n=1}^{2g-1}n\left(f_1\left(\frac{n}{g}\right)-\P_g[n\in S]\right)\right|
    &\leq 2g\sum_{n=1}^{2g-1}\left|f_1\left(\frac{n}{g}\right)-\P_g[n\in S]\right|\\
    &\leq (2g)(4\epsilon_1 g  +(2-4\epsilon_1)g \epsilon_2).
    \end{split}
\end{equation*}

Note that $\frac{\gamma^2}{2}+(1-\phi^{-1})\frac{3\gamma^2}{2}=\frac{9+\sqrt{5}}{20}$. Since $\epsilon_1,\epsilon_2$ were arbitrary, we conclude that
\[
\E_g[\alpha]=\frac{9+\sqrt{5}}{20}g^2+o(g^2),
\]
completing the proof of \eqref{avg_alpha}.
\end{proof}

\begin{thm}\label{average w square}
We have
\[
\lim_{g\to\infty}\frac{1}{g^4}\E_g[w(S)^2]=\left(\frac{1}{10\phi}\right)^2.
\]
\end{thm}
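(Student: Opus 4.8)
The plan is to deduce this from the first-moment computation in the proof of Theorem \ref{Weight result}(1) together with the pairwise estimate in Corollary \ref{cor:prob subset}(2). Set $\alpha(S)=\sum_{x\in\Hs(S)}x$, so that $w(S)=\alpha(S)-\frac{g(g+1)}{2}$ and hence
\[
\E_g[w(S)^2]=\E_g[\alpha(S)^2]-g(g+1)\E_g[\alpha(S)]+\frac{g^2(g+1)^2}{4}.
\]
Since $F(S)\le 2g-1$ for every $S\in\SS_g$, we have $\alpha(S)^2=\sum_{n,m=1}^{2g-1}nm\cdot[n\notin S][m\notin S]$, and so by linearity of expectation
\[
\E_g[\alpha(S)^2]=\sum_{n=1}^{2g-1}\sum_{m=1}^{2g-1}nm\,\P_g\big[\{n,m\}\cap S=\emptyset\big].
\]
The goal is to show this is $\left(\frac{9+\sqrt5}{20}\right)^2 g^4+o(g^4)$, matching the square of $\E_g[\alpha(S)]=\frac{9+\sqrt5}{20}g^2+o(g^2)$ already established.

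To do this I would fix $\epsilon_1,\epsilon_2>0$ and split the pairs $(n,m)$ according to the three "good" intervals appearing in Corollary \ref{cor:prob subset}, namely $\big[1,(\gamma-\epsilon_1)g\big)\cup\big((\gamma+\epsilon_1)g,(2\gamma-\epsilon_1)g\big)\cup\big((2\gamma+\epsilon_1)g,2g\big]$, together with the narrow complementary bands around $\gamma g$ and $2\gamma g$. The diagonal terms $n=m$ number $O(g)$ and each is at most $(2g)^2$, so they contribute $O(g^3)=o(g^4)$. The off-diagonal terms in which at least one coordinate lies in a bad band number $O(\epsilon_1 g)\cdot O(g)=O(\epsilon_1 g^2)$, each of size at most $(2g)^2$, contributing $O(\epsilon_1 g^4)$. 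For the remaining off-diagonal terms, both $n\ne m$ lie in good intervals, so Corollary \ref{cor:prob subset}(2) gives, for $g$ large, $\big|\P_g[\{n,m\}\cap S=\emptyset]-(1-f_1(n/g))(1-f_1(m/g))\big|<\epsilon_2$; summing the weights $nm\le(2g)^2$ over the $O(g^2)$ such pairs, the total discrepancy between this portion of $\E_g[\alpha(S)^2]$ and the corresponding portion of $\sum_{n,m}nm(1-f_1(n/g))(1-f_1(m/g))$ is $O(\epsilon_2 g^4)$.

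It then remains to observe that $\sum_{n,m=1}^{2g-1}nm(1-f_1(n/g))(1-f_1(m/g))=\left(\sum_{n=1}^{2g-1}n(1-f_1(n/g))\right)^2$, and that removing the diagonal ($o(g^4)$) and the pairs with a bad coordinate ($O(\epsilon_1 g^4)$) changes it only by a negligible amount; by the computation in the proof of Theorem \ref{Weight result}(1) this square equals $\left(\frac{9+\sqrt5}{20}\right)^2 g^4+O(g^3)$. Combining the pieces, $\frac{1}{g^4}\E_g[\alpha(S)^2]\to\left(\frac{9+\sqrt5}{20}\right)^2$ after letting $g\to\infty$ and then $\epsilon_1,\epsilon_2\to 0$. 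Substituting into the displayed identity and using
\[
\left(\frac{9+\sqrt5}{20}\right)^2-\frac{9+\sqrt5}{20}+\frac14=\left(\frac{9+\sqrt5}{20}-\frac12\right)^2=\left(\frac{\sqrt5-1}{20}\right)^2=\left(\frac{1}{10\phi}\right)^2
\]
finishes the proof.

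The main obstacle is controlling the contribution of the "middle" bands near $\gamma g$ and $2\gamma g$: Corollary \ref{cor:prob subset}(2) only governs the joint probability $\P_g[\{n,m\}\cap S=\emptyset]$ when $n,m$ avoid these bands, and the point is precisely that each band has width $O(\epsilon_1 g)$, so the excluded pairs contribute only $O(\epsilon_1 g^4)$, which disappears in the double limit. Everything else is bookkeeping; the only mild subtlety is that the threshold $M(\epsilon_1,\epsilon_2)$ in Corollary \ref{cor:prob subset}(2) depends on the parameters, which forces the order of limits "fix $\epsilon_1,\epsilon_2$, send $g\to\infty$, then send $\epsilon_1,\epsilon_2\to 0$." Conceptually this is just a variance estimate showing $\Var_g[w(S)]=o(g^4)$, with the non-obvious input being the asymptotic pairwise independence encoded in Corollary \ref{cor:prob subset}(2), which fails for individual pairs $(n,m)$ but holds in the averaged sense needed here.
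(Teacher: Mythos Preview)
Your proposal is correct and follows essentially the same route as the paper's own proof: both reduce to showing $\E_g[\alpha(S)^2]=\left(\frac{9+\sqrt5}{20}\right)^2 g^4+o(g^4)$ by writing $\E_g[\alpha(S)^2]=\sum_{n,m}nm\,\P_g[\{n,m\}\cap S=\emptyset]$, separating the diagonal $n=m$ terms (contributing $O(g^3)$), the off-diagonal pairs with a coordinate in a width-$O(\epsilon_1 g)$ band near $\gamma g$ or $2\gamma g$ (contributing $O(\epsilon_1 g^4)$), and the remaining pairs handled via Corollary~\ref{cor:prob subset}(2) with error $O(\epsilon_2 g^4)$, then comparing with the factored sum $\big(\sum_n n(1-f_1(n/g))\big)^2$ and using the first-moment computation. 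The only cosmetic difference is that the paper organizes the argument by first subtracting off the full product $\prod(1-f_1)$ and then bounding the resulting difference term by term, whereas you split the index set first; the estimates and the order of limits are identical.
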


Before giving the proof we use this result to complete the proof of Theorem~\ref{Weight result}(2).
\begin{proof}[Proof of Theorem \ref{Weight result}(2)]
Apply Lemma \ref{Concentration by Chebychev} together with Theorem \ref{Weight result}(1) and Theorem \ref{average w square}.
\end{proof}

\begin{proof}[Proof of Theorem \ref{average w square}]
We will show that
\begin{equation}\label{avg_alpha2}
\lim_{g\to\infty}\frac{1}{g^4}\E_g[\alpha(S)^2]=\left(\frac{9+\sqrt{5}}{20}\right)^2.
\end{equation}
Assuming this for now, we complete the proof.  Since $w(S)=\alpha(S)-\frac{g(g+1)}{2}$, we see that
\begin{eqnarray*}
    \lim_{g\to\infty}\frac{1}{g^4}\E_g[w(S)^2] 
    &=& \lim_{g\to\infty}\frac{1}{g^4}\E_g[\alpha(S)^2]
    -\lim_{g\to\infty}\frac{g(g+1)}{g^4}\E_g[\alpha(S)]
    +\lim_{g\to\infty}\frac{1}{g^4}\frac{g^2(g+1)^2}{4}\\
    &=& \left(\frac{9+\sqrt{5}}{20}\right)^2-\frac{9+\sqrt{5}}{20}+\frac{1}{4}=\left(\frac{1}{10\phi}\right)^2,
\end{eqnarray*}
where we used the expressions in \eqref{avg_alpha} and \eqref{avg_alpha2}.  Therefore, we only need to prove \eqref{avg_alpha2}.

For $1\leq i\leq 2g-1$, consider the following random variables on $\SS_g$:
\[
\psi_i(S)=\begin{cases}1 &\text{ if } i\notin S\\ 0 &\text{ if } i\in S\end{cases}.
\]
Therefore, $\alpha(S)=\sum_{i=1}^{2g-1}i\psi_i(S).$ By linearity of expectation we have
\begin{eqnarray*}
\E_g[\alpha(S)^{2}] &= &\sum_{i=1}^{2g-1}\sum_{j=1}^{2g-1}ij\E_g[\psi_i\psi_j] =\sum_{i=1}^{2g-1}\sum_{j=1}^{2g-1}ij\P_g[\{i,j\}\cap S=\emptyset]\\
& = & 
\sum_{i=1}^{2g-1}\sum_{j=1}^{2g-1}ij\left(1-f_{1}\left(\frac{i}{g}\right)\right)\left(1-f_{1}\left(\frac{j}{g}\right)\right) \\
& & -\sum_{i=1}^{2g-1}\sum_{j=1}^{2g-1}ij 
    \left(
    \left(1-f_{1}\left(\frac{i}{g}\right)\right)\left(1-f_{1}\left(\frac{j}{g}\right)\right)
    -\P_g[\{i,j\}\cap S=\emptyset]
    \right).
\end{eqnarray*}
We estimate the size of each term separately, starting with the first.  We have
\[
    \sum_{i=1}^{2g-1}\sum_{j=1}^{2g-1}ij\left(1-f_{1}\left(\frac{i}{g}\right)\right)\left(1-f_{1}\left(\frac{j}{g}\right)\right)
    =\left(\sum_{i=1}^{2g-1}i\left(1-f_{1}\left(\frac{i}{g}\right)\right)\right)^2.
\]
Note that,
\[
\sum_{i=1}^{2g-1}i\left(1-f_{1}\left(\frac{i}{g}\right)\right)
=
\frac{\gamma^2}{2}g^2+ (1-\phi^{-1})\left(\frac{4\gamma^2}{2}-\frac{\gamma^2}{2}\right)g^2 +O(g).
\]
Therefore,
\[
\sum_{i=1}^{2g-1}\sum_{j=1}^{2g-1}ij\left(1-f_{1}\left(\frac{i}{g}\right)\right)\left(1-f_{1}\left(\frac{j}{g}\right)\right)
=
\left(\frac{9+\sqrt{5}}{20}\right)^2 g^4+O(g^3).
\]

Now we estimate the second term. It is clear that 
\begin{eqnarray*}
    & &\left|\sum_{i=1}^{2g-1}\sum_{j=1}^{2g-1}ij 
    \left(
    \left(1-f_{1}\left(\frac{i}{g}\right)\right)\left(1-f_{1}\left(\frac{j}{g}\right)\right)
    -\P_g[\{i,j\}\cap S=\emptyset]
    \right) \right|\\
    &\leq & \sum_{i=1}^{2g-1}\sum_{j=1}^{2g-1}ij 
    \left|    \left(1-f_{1}\left(\frac{i}{g}\right)\right)\left(1-f_{1}\left(\frac{j}{g}\right)\right)
    -\P_g[\{i,j\}\cap S=\emptyset]
    \right|.
\end{eqnarray*}
We first consider the terms with $i=j$, and see that
\[
\sum_{i=1}^{2g-1}i^2 \left|    \left(1-f_{1}\left(\frac{i}{g}\right)\right)^2
    -\P_g[\{i\}\cap S=\emptyset]
    \right|\leq \sum_{i=1}^{2g-1}i^2=O(g^3).
\]

Next we consider the terms with $i\neq j$. Fix $\epsilon_1,\epsilon_2>0$. Corollary \ref{cor:prob subset}(2) gives an $M_2(\epsilon_1,\epsilon_2)$ such that for all $g > M_2(\epsilon_1,\epsilon_2)$ and all $i\neq j$ with 
\[
\{i,j\}\subseteq \big[1,(\gamma-\epsilon_1)g\big) \cup \big((\gamma+\epsilon_1)g,(2\gamma-\epsilon_1)g\big) \cup \big((2\gamma+\epsilon_1)g,2g\big]
\]
we have 
\[
\left|\P_g[\{i,j\}\cap S=\emptyset]-\left(1-f_{1}\left(\frac{i}{g}\right)\right)\left(1-f_{1}\left(\frac{j}{g}\right)\right)\right|<\epsilon_2.
\]
Therefore,  
\begin{eqnarray*}
& & \sum_{i=1}^{2g-1}\sum_{\substack{j=1\\ j\neq i}}^{2g-1}ij 
    \left|    \left(1-f_{1}\left(\frac{i}{g}\right)\right)\left(1-f_{1}\left(\frac{j}{g}\right)\right)
    -\P_g[\{i,j\}\cap S=\emptyset]
    \right| \\
& \le & 
\sum_{i=1}^{2g-1}\sum_{j=1}^{2g-1}ij \epsilon_2 +
\sum_{i\in \big((\gamma-\epsilon_1)g,(\gamma+\epsilon_1)g\big)} \sum_{j=1}^{2g-1}ij 
+ \sum_{i\in \big((2\gamma-\epsilon_1)g,(2\gamma+\epsilon_1)g\big)} \sum_{j=1}^{2g-1}ij \\
& &+
\sum_{i =1}^{2g-1}\sum_{j\in \big((\gamma-\epsilon_1)g,(\gamma+\epsilon_1)g\big)} ij
+ 
\sum_{i =1}^{2g-1}\sum_{j\in \big((2\gamma-\epsilon_1)g,(2\gamma+\epsilon_1)g\big)} ij \\
& \le & 
\epsilon_2\frac{(2g)^2}{2}\frac{(2g)^2}{2}+ 4(2\epsilon_1 g)(2g) \frac{(2g)^2}{2}=\epsilon_2 4g^4+\epsilon_1 32g^4.
\end{eqnarray*}
Combining everything, since $\epsilon_1,\epsilon_2$ were arbitrary, we get
\[
\E_g[\alpha(S)^2] =\left(\frac{9+\sqrt{5}}{20}\right)^2 g^4+o(g^4).
\]
\end{proof}

\section{Counting Numerical Semigroups with Large Embedding Dimension}\label{Sec: large ED}

A main idea of this section is to construct a bijection between numerical semigroups with fixed multiplicity, genus, and embedding dimension and certain finite sequences of positive integers.  These sequences are the initial segments of Kunz coordinate vectors of numerical semigroups.  We recall some notation and basic facts about these objects.
\begin{defn}
Let $S$ be a numerical semigroup.  The Ap\'ery set of $S$ with respect to an element $m \in S$ is
\[
\Ap(S;m) = \{s\in S\colon s - m \not\in S\}.
\]
\end{defn}
It is easy to see that there is one element of $\Ap(S;m)$ in each residue class modulo $m$.  We can write $\Ap(S;m) = \{0,a_1,a_2,\ldots, a_{m-1}\}$ where each $a_i \equiv i \pmod{m}$.  We then define the nonnegative integer $k_i$ by $a_i = k_i m + i$.  Note that if $m = m(S)$, then each $k_i \ge 1$.  
\begin{defn}
The Kunz coordination vector of $S$ with respect to $m$ is $(k_1,\ldots, k_{m-1})$.  Let $\KV_m$ denote the function that takes a numerical semigroup containing $m$ to its Kunz coordinate vector with respect to $m$.
\end{defn}
We collect some results about Kunz coordinate vectors of numerical semigroups.
\begin{thm}\label{Kunz_bijection}\cite{kunz, RGGB}
The map $\KV_m$ gives a bijection between $S\in \SS_g$ with $m(S) = m$ and $(x_1,\ldots, x_{m-1}) \in \Z_{\ge 1}^{m-1}$ satsifying:
\begin{eqnarray*}
x_i + x_j \ge x_{i+j}, & \text{for all } &1 \le i \le j \le m-1 \text{ with } i+j < m,\\
x_i + x_j + 1\ge x_{i+j-m}, & \text{for all } & 1 \le i \le j \le m-1 \text{ with } i+j > m,\\
 \sum_{i=1}^{m-1} x_i  = g. & &
\end{eqnarray*}
\end{thm}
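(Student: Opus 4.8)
The plan is to verify separately that $\KV_m$ is well-defined, injective, and surjective, all organized around one structural description of $S$. The key fact I would establish first is that, with the convention $a_0 = 0$, the element $a_i$ is the least element of $S$ in the residue class $i \bmod m$: since $a_i \in S$ and $a_i - m \notin S$ while $m \in S$, one gets $S \cap (i + m\N_0) = \{a_i + cm : c \ge 0\}$, and hence $S = \bigsqcup_{i=0}^{m-1}\{a_i + cm : c \ge 0\}$. This single description does three things at once. It shows $S$ is completely determined by $(k_1,\dots,k_{m-1})$, giving injectivity and simultaneously furnishing the inverse map. It lets me read off the genus, since the gaps in class $i$ are exactly $i, i+m, \dots, a_i - m$, a set of size $k_i$ (and class $0$ contributes none), so $g = \sum_{i=1}^{m-1} k_i$. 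And because $m = m(S)$ forces $1,\dots,m-1 \notin S$, i.e. $a_i \ge m+i$, it yields $k_i \ge 1$. Thus the positivity and sum conditions are immediate, and only the two inequality families require real work.

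For those inequalities I would show that closure of $S$ under addition is equivalent to the single requirement that $a_i + a_j \in S$ for all $1 \le i \le j \le m-1$. Sufficiency holds because every element of $S$ is an Apéry element plus a nonnegative multiple of $m$ and $m \in S$, so closure on the finitely many pairs of generators propagates to all of $S$; necessity is trivial. I would then translate the condition $a_i + a_j \in S$ by splitting on the size of $i+j$ relative to $m$. If $i+j < m$, then $a_i + a_j = (k_i+k_j)m + (i+j)$ and the requirement $a_i+a_j \ge a_{i+j}$ becomes $k_i + k_j \ge k_{i+j}$. If $i+j > m$, then $a_i + a_j = (k_i+k_j+1)m + (i+j-m)$ and the requirement $a_i + a_j \ge a_{i+j-m}$ becomes $k_i + k_j + 1 \ge k_{i+j-m}$. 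If $i+j = m$, the sum is a positive multiple of $m$ and lies in $S$ automatically, so no constraint is produced. This reproduces exactly the two displayed families and completes well-definedness.

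For surjectivity, given $(x_1,\dots,x_{m-1}) \in \Z_{\ge 1}^{m-1}$ satisfying the three conditions, I would set $a_0 = 0$, $a_i = x_i m + i$, and define $S = \{t \in \N_0 : t \ge a_{t \bmod m}\}$, then check it is a numerical semigroup with $\KV_m(S) = (x_1,\dots,x_{m-1})$. Membership of $0$ and finiteness of $\N_0 \setminus S$ (again $\sum x_i$ gaps) are immediate. Closure is verified by the same trichotomy run in reverse: from $s \ge a_i$ and $t \ge a_j$ the relevant inequality among the $x_i$ is precisely what forces $s+t \ge a_{(i+j)\bmod m}$, with the $i+j=m$ case automatic. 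The condition $x_i \ge 1$ gives $1,\dots,m-1 \notin S$, so $m(S) = m$; and since each $a_i$ is by construction the least element of $S$ in its class, the Apéry set with respect to $m$ is $\{0,a_1,\dots,a_{m-1}\}$, so the recovered Kunz vector is the original one.

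The only step demanding genuine care is the closure equivalence and its two-way translation. I expect the main obstacle to be the case analysis on $i+j$ versus $m$: recognizing that testing closure on pairs of Apéry generators suffices, and correctly tracking the carry that produces the $+1$ in the second inequality family when $i+j$ exceeds $m$. Once the description $S = \bigsqcup_i \{a_i + cm\}$ is in hand, injectivity, the genus count, and the multiplicity are all routine bookkeeping.
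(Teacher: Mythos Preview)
Your argument is correct and is the standard proof of this result. Note, however, that the paper does not supply its own proof of this theorem: it is stated with citations to Kunz and to Rosales--Garc\'ia-S\'anchez--Garc\'ia-Garc\'ia--Branco and then used as a black box, so there is no in-paper proof to compare against. Your write-up follows the classical route found in those references: the residue-class decomposition $S=\bigsqcup_i(a_i+m\N_0)$ gives injectivity, the genus identity, and the positivity constraint immediately, and the reduction of closure to the finitely many sums $a_i+a_j$ together with the trichotomy on $i+j$ versus $m$ (with the carry producing the $+1$) is exactly how the Kunz inequalities arise.
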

\begin{proposition}
If $S$ is a numerical semigroup with $m=m(S)$, then $\A(S) \setminus \{m\} \subseteq\Ap(S;m)$. More precisely, if $\KV_m(S) = (k_1,\ldots, k_{m-1})$ we have
\begin{equation*}
    \begin{split}
    \A(S)=\{m\}\cup \{&mk_i+i\mid \not\exists j_1,j_2\in[1,m-1]\colon j_1+j_2=i,\ k_{j_1}+k_{j_2}=k_i,\\ 
    &\text{ and } \not\exists j_1,j_2\in[1,m-1]\colon j_1+j_2=m+i,\ k_{j_1}+k_{j_2}+1=k_i\} .   
    \end{split}
\end{equation*}
\end{proposition}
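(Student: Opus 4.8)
The plan is to reduce the statement to an analysis of when an Ap\'ery element $mk_i+i$ factors as a sum of two positive elements of $S$, exploiting the defining inequalities of Theorem~\ref{Kunz_bijection}.

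First I would dispose of the membership $m\in\A(S)$ and the inclusion $\A(S)\setminus\{m\}\subseteq\Ap(S;m)$. For the former: any expression $m=s_1+s_2$ with $s_1,s_2\in S\setminus\{0\}$ forces $s_1,s_2\geq m$, hence $m\geq 2m$, which is absurd. For the latter: if $a\in\A(S)$ with $a\neq m$, then $a\in S\setminus\{0\}$ gives $a\geq m$, so $a>m$; if $a-m\in S$ then $a=m+(a-m)$ writes $a$ as a sum of two positive elements of $S$, contradicting $a\in\A(S)$, so $a-m\notin S$, i.e.\ $a\in\Ap(S;m)$. Combining these with the fact that there is exactly one Ap\'ery element $a_i=mk_i+i$ in each nonzero residue class modulo $m$ shows that $\A(S)\subseteq\{m\}\cup\{mk_i+i\mid 1\leq i\leq m-1\}$, so it remains to decide, for each fixed $i\in[1,m-1]$, whether $a:=mk_i+i$ lies in $\A(S)$, equivalently whether $a$ is a sum of two positive elements of $S$.

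Next I would prove the equivalence ``$a$ is a sum of two positive elements of $S$'' $\iff$ ``one of the two displayed $(j_1,j_2)$-conditions holds.'' The implication $\Leftarrow$ is immediate: if $j_1+j_2=i$ and $k_{j_1}+k_{j_2}=k_i$, then $a_{j_1}+a_{j_2}=m(k_{j_1}+k_{j_2})+(j_1+j_2)=mk_i+i=a$, and $a_{j_1},a_{j_2}\in\Ap(S;m)\subseteq S$ are positive since $j_1,j_2\geq 1$; similarly if $j_1+j_2=m+i$ and $k_{j_1}+k_{j_2}+1=k_i$ then $a_{j_1}+a_{j_2}=m(k_i-1)+(m+i)=a$. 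For $\Rightarrow$, suppose $a=s_1+s_2$ with $s_1,s_2\in S\setminus\{0\}$ and write $s_t=m\ell_t+r_t$ with $0\leq r_t\leq m-1$; membership $s_t\in S$ is equivalent to $\ell_t\geq k_{r_t}$ (with the convention $k_0=0$), and when $r_t=0$ positivity forces $\ell_t\geq 1$. If $r_1=0$ then $r_2=i$, and matching the coefficients of $m$ gives $\ell_1+\ell_2=k_i$, which is incompatible with $\ell_2\geq k_i$ and $\ell_1\geq 1$; the case $r_2=0$ is symmetric. So $r_1,r_2\in[1,m-1]$, and since $r_1+r_2\equiv i\pmod{m}$ with $2\leq r_1+r_2\leq 2m-2$ we have either $r_1+r_2=i$ or $r_1+r_2=m+i$. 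In the first case matching coefficients of $m$ gives $\ell_1+\ell_2=k_i$, hence $k_{r_1}+k_{r_2}\leq k_i$, while the first inequality of Theorem~\ref{Kunz_bijection} applied to the pair $\{r_1,r_2\}$ (note $r_1+r_2=i<m$) gives $k_{r_1}+k_{r_2}\geq k_i$; so $(j_1,j_2)=(r_1,r_2)$ satisfies the first condition. In the second case matching coefficients of $m$ gives $\ell_1+\ell_2+1=k_i$, hence $k_{r_1}+k_{r_2}\leq k_i-1$, while the second inequality of Theorem~\ref{Kunz_bijection} applied to $\{r_1,r_2\}$ (note $r_1+r_2=m+i>m$) gives $k_{r_1}+k_{r_2}+1\geq k_i$; so $(j_1,j_2)=(r_1,r_2)$ satisfies the second condition.

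Finally, negating this equivalence shows that $mk_i+i\in\A(S)$ precisely when neither $(j_1,j_2)$-condition holds, which together with $m\in\A(S)$ and the inclusion from the first step yields the claimed description of $\A(S)$. The steps that require genuine care are the bookkeeping of the ``carry'' case $r_1+r_2=m+i$ and the verification that the Ap\'ery/Kunz inequalities of Theorem~\ref{Kunz_bijection} are invoked with the correct indices; the $r_t=0$ subcases, which one might worry about, turn out to be vacuous, and this is the only real obstacle.
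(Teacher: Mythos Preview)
Your argument is correct. The paper itself does not prove this proposition: it is stated as background, with the remark ``For a more detailed discussion of this material, see \cite[Section 4]{KaplanONeill}.'' So there is no in-paper proof to compare against, and your direct verification stands on its own.

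Your route is the natural one and matches what the cited reference does in spirit: reduce to the Ap\'ery set using $\A(S)\setminus\{m\}\subseteq\Ap(S;m)$, then for each $a_i=mk_i+i$ characterize reducibility by writing a putative decomposition in base $m$ and splitting on whether there is a carry. The only places one could stumble are handled cleanly: you correctly rule out the $r_t=0$ summands via $\ell_1+\ell_2=k_i$ versus $\ell_2\ge k_i$, $\ell_1\ge 1$; and in the two nontrivial cases you squeeze $k_{r_1}+k_{r_2}$ between the upper bound coming from $\ell_t\ge k_{r_t}$ and the lower bound coming from the Kunz inequalities of Theorem~\ref{Kunz_bijection}, forcing equality and producing the witnessing pair $(j_1,j_2)$. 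One small cosmetic point: in the carry case you might note explicitly that $r_1+r_2=m+i\ge m+1>m$ so that the second family of Kunz inequalities is indeed the relevant one, but this is already implicit in your parenthetical.
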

\noindent For a more detailed discussion of this material, see \cite[Section 4]{KaplanONeill}.

In order to state the first main result of this section, we introduce some notation.  Suppose $\overline{x} = (x_1,\ldots, x_t) \in \{1,2,3\}^t$.  We define
\begin{enumerate}
\item $a(\overline{x})=\#\{i\in [1,t]\mid x_i=2\}$,
\item $b(\overline{x})=\#\{i\in [1,t]\mid x_i=3\}$,
\item $c(\overline{x})=\#\{i\in [1,t]\mid \exists j_1,j_2\in [1,t]:\; j_1+j_2=i, (x_{j_1},x_{j_2},x_{i})=(1,1,2)\}$.
\end{enumerate}

Theorem \ref{Kunz_bijection} gives a bijection between numerical semigroups $S$ with $m(S) = m$ and $g(S) = g$ and a certain set of integer tuples of length $m-1$.  We consider a refined version of a this result that applies in the case where $g(S)$ and $e(S)$ are not too far away from $m(S)$. 
\begin{thm}\label{thm:bijection}
Fix integers $k_1$ and $k_2$ satisfying $-1 \le k_1 \le k_2$ and $m \ge 2k_1+2$.  For $(x_1,\ldots, x_{m-1}) \in \Z_{\ge 0}^{m-1}$, let $\overline{x} = (x_1,\ldots, x_{2k_1+1})$.  

There is a bijection between the set of numerical semigroups $S$ satisfying $m(S) = m$, $g(S) = m+k_1$, and $e(S) = g(S) - k_2$, and sequences $(x_1,\ldots, x_{m-1})$ satisfying:
\begin{enumerate}
    \item $x_1,\dots,x_{m-1} \in \{1,2,3\}$.
    \item If $i\geq 2k_1$, then $x_i\in\{1,2\}$.
    \item Whenever $i_1, i_2, i_3\in [1,2k_1+1]$ satisfy $i_1+i_2=i_3$, we have $(x_{i_1},x_{i_2},x_{i_3})\neq(1,1,3)$.
    \item $\#\{i\in [2k_1+2,m-1]\mid x_i=2\}=k_1+1-a(\overline{x})-2b(\overline{x})$.
    \item $a(\overline{x})+b(\overline{x})-c(\overline{x})=2k_1+1-k_2$.
\end{enumerate}
\end{thm}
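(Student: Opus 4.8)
The plan is to use the Kunz coordinate vector bijection (Theorem \ref{Kunz_bijection}) as the starting point and then show that, under the constraints $g(S) = m+k_1$ and $e(S) = g(S)-k_2$ with $k_1, k_2$ fixed and $m$ large, the Kunz vector $(k_1,\ldots,k_{m-1})$ is forced to take values in $\{1,2,3\}$ (and eventually in $\{1,2\}$), and that the embedding dimension constraint translates into the counting conditions (4) and (5). Throughout I would write $\overline{x} = (x_1,\ldots,x_{2k_1+1})$ for the ``head'' of the vector, since all the subtle combinatorics happens there.

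First I would record the basic numerology. If $S$ has $m(S)=m$ and $\KV_m(S) = (x_1,\ldots,x_{m-1})$, then $g(S) = \sum_{i=1}^{m-1} x_i$, and since each $x_i \ge 1$ we get $g(S) \ge m-1$; writing $g(S) = m+k_1$ forces $\sum_{i=1}^{m-1}(x_i - 1) = k_1+1$. In particular at most $k_1+1$ of the coordinates exceed $1$, and no coordinate can exceed $k_1+2$. So for $m$ large (say $m \ge 2k_1+2$, matching the hypothesis) the coordinates that are $\ge 2$ lie in a set of size $\le k_1+1$. The next point is to locate these ``large'' coordinates: I would argue that if $x_i \ge 2$ then the semigroup relations $x_i + x_j \ge x_{i+j}$ (for $i+j<m$) propagate largeness, but more usefully that a coordinate $x_i$ with $i \ge 2k_1$ cannot be $\ge 3$: if $x_i \ge 3$ for some such $i$, then writing $i = i_1 + i_2$ suitably or using the ``wrap-around'' relation $x_{i_1}+x_{i_2}+1 \ge x_{i_1+i_2-m}$ one produces too large a total excess $\sum(x_i-1)$, contradicting $=k_1+1$. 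A careful version of this bookkeeping gives conditions (1), (2), and also (3) — condition (3) is exactly the statement that the subset relations among the first $2k_1+1$ coordinates cannot produce a $3$ from two $1$'s cheaply, i.e. the only way to have $x_{i_3}=3$ with $i_3 = i_1+i_2$ is if one of $x_{i_1},x_{i_2}$ is already large. I expect the precise threshold $2k_1$ versus $2k_1+1$ to require some care with the wrap-around relations and with whether $i+j$ exceeds $m$.

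Next I would handle the embedding dimension. Using the Proposition describing $\A(S)$ in terms of the Kunz vector, $e(S) = 1 + \#\{i \in [1,m-1] : a_i = k_i m + i \text{ is not ``decomposable''}\}$, where decomposable means there exist $j_1,j_2$ with $j_1+j_2 \in \{i, m+i\}$ and the corresponding $k$-values adding up appropriately. Since the number of coordinates $\ge 2$ is bounded, almost every $i$ has $x_i = 1$, and for those the generator $a_i = m+i$; such a generator is decomposable precisely when we can write $m+i$ as a sum using the small generators — concretely when $i$ (or $m+i$) decomposes within the head in the pattern $(1,1,2)$, which is what $c(\overline{x})$ counts, or when $x_i = 2$ with $i$ decomposing as $(1,1,2)$. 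Carefully counting: the indecomposable generators among $i$ with $x_i=1$ number $m-1$ minus (number of $i$ with $x_i \ge 2$) minus $c(\overline{x})$-type corrections, and the indecomposable ones among $i$ with $x_i \in \{2,3\}$ contribute the rest. Setting $e(S) = m + k_1 - k_2$ and simplifying, the terms involving $m$ cancel (this is the point of conditions (2) and (4), which pin down how many ``tail'' coordinates equal $2$ versus $3$), and one is left with the identity $a(\overline{x}) + b(\overline{x}) - c(\overline{x}) = 2k_1 + 1 - k_2$, which is condition (5). Condition (4) itself is just rewriting $\sum_{i=1}^{m-1}(x_i-1) = k_1+1$ by splitting into head and tail: the tail contributes $\#\{i \ge 2k_1+2 : x_i = 2\}$ (no $3$'s in the tail by (2)), and the head contributes $a(\overline{x}) + 2b(\overline{x})$.

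Finally I would verify that the correspondence is a genuine bijection: given a sequence satisfying (1)--(5), one must check it actually satisfies all the Kunz relations of Theorem \ref{Kunz_bijection} (the nontrivial ones only involve small indices or the wrap-around, and conditions (2),(3) are designed to make these automatic for large $m$), that its coordinate sum is $m+k_1$ (condition (4)), and that the resulting semigroup has the prescribed embedding dimension (conditions (5) and the decomposability analysis run in reverse). The main obstacle, I expect, is the embedding-dimension bookkeeping: one must be scrupulous about which potential decompositions $i = j_1 + j_2$ and $m+i = j_1+j_2$ are available, making sure that for the tail indices no ``new'' decompositions appear beyond those already accounted for in the head, and that the count of indecomposable Apéry elements matches $e(S) - 1$ exactly rather than up to an error term. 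Getting the constant in condition (5) right (the ``$2k_1+1$'' as opposed to $2k_1$ or $2k_1+2$) will hinge on this.
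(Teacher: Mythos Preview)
Your overall strategy matches the paper's: start from the Kunz bijection, use the genus constraint $\sum (x_i-1)=k_1+1$ to bound coordinate sizes and locations, and then translate the embedding-dimension condition into the identity (5). Conditions (1)--(4) come out essentially as you describe, and the paper proves them via short pigeonhole lemmas (partition $[1,i-1]$ into pairs summing to $i$; each pair forces at least one coordinate $\ge 2$).

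There is, however, a real gap in your treatment of the embedding dimension. You write that an Ap\'ery element $a_i=m+i$ with $x_i=1$ ``is decomposable precisely when \dots\ $i$ decomposes within the head in the pattern $(1,1,2)$, which is what $c(\overline{x})$ counts.'' This is backwards. Every $i$ with $x_i=1$ gives a minimal generator: $m+i<2m$ cannot be a sum of two positive elements of $S$, each $\ge m$. The function $c(\overline{x})$ counts those $i$ in the \emph{head} with $x_i=2$ that admit a $(1,1,2)$ decomposition; these are the $x_i=2$ head elements that are \emph{not} generators, leaving $a(\overline{x})-c(\overline{x})$ that are. The substantive point you are missing is on the other side: one must show that \emph{no} tail index $i\ge 2k_1+2$ with $x_i=2$, and \emph{no} index with $x_i=3$, ever gives a generator. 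This is not automatic and is exactly the content of the paper's Lemmas~\ref{ki=2 is gen} and~\ref{ki=3 is gen}: if such an element were a generator, the same pair-partition argument forces $\sum(x_j-1)$ to exceed $k_1+1$. Your remark that one must ensure ``for the tail indices no `new' decompositions appear'' inverts the issue --- what you need is that such decompositions \emph{do} appear, so that those elements drop out of $\A(S)$. Once this is established, the count
\[
e(S)=1+\bigl((m-1)-a(\overline{x})-b(\overline{x})-\#\{i\ge 2k_1+2:x_i=2\}\bigr)+\bigl(a(\overline{x})-c(\overline{x})\bigr)
\]
simplifies (using (4)) to $g-2k_1-1+a(\overline{x})+b(\overline{x})-c(\overline{x})$, which is Proposition~\ref{embedding_overlinex} and yields (5).
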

\noindent Note that conditions (4) and (5) imply that if such an $S$ exists, then
\begin{itemize}
    \item $a(\overline{x})+2b(\overline{x})\leq k_1+1$.
    \item $k_2\leq 2k_1+1$.
\end{itemize}
In the course of proving Theorem \ref{thm:bijection}, we will express $e(S)$ in terms of $(x_1,\ldots, x_{m-1})$.  We highlight this result because we will apply it in the discussion that follows.
\begin{proposition}\label{embedding_overlinex}
Suppose $(x_1,\ldots, x_{m-1}) = \KV_m(S)$ where $S$ is a numerical semigroup satisfying $m(S) = m$, $g(S) = m+k_1$, and $m\geq 2k_1+2$.  Let $\overline{x} = (x_1,\ldots, x_{2k_1+1})$.  Then 
\[
e(S) = g-2k_1-1+a(\overline{x})+b(\overline{x})-c(\overline{x}).
\]
\end{proposition}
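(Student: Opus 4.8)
The plan is to work entirely with the Kunz coordinate vector $(x_1,\dots,x_{m-1}) = \KV_m(S)$ and the description of $\A(S)$ in terms of it given in the Proposition just above (the one immediately following Theorem~\ref{Kunz_bijection}). Since that Proposition gives $\A(S) = \{m\}\cup\{mx_i+i : mx_i+i\in\A(S)\}$, we have $e(S) = 1 + \#\{i\in[1,m-1] : mx_i+i\in\A(S)\}$. When $x_i = 1$ the element $mx_i+i = m+i < 2m$ cannot be a sum of two nonzero elements of $S$, so $m+i\in\A(S)$ automatically (equivalently, both reducibility clauses in that Proposition are vacuous). Hence, setting
\[
R = \#\{r\in[1,m-1] : x_r\ge 2 \text{ and } mx_r+r\notin\A(S)\},
\]
we get $e(S) = (m-1) + 1 - R = m - R$. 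Since $g = m + k_1$, it suffices to prove $R = k_1 + 1 - a(\overline{x}) - b(\overline{x}) + c(\overline{x})$.

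First I would record the ``budget'' identity $\sum_{i=1}^{m-1}(x_i-1) = g-(m-1) = k_1+1$, which shows that at most $k_1+1$ indices $i$ have $x_i\ge 2$ and that $\sum_{i:x_i\ge 2}(x_i-1) = k_1+1$. The heart of the argument is a single recurring idea: for a fixed $N$, a pair $\{j,N-j\}$ of indices in some interval cannot both have $x$-value $1$ — either because a Kunz inequality from Theorem~\ref{Kunz_bijection} forces $x_j+x_{N-j}$ (or $x_j+x_{N-j}+1$) to be at least some $x_\ell\ge 3$, or because the assumption $mx_r+r\in\A(S)$ forces the negation of the corresponding reducibility clause. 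In each such situation at least one index of every pair lies in the set $\{i:x_i\ge 2\}$ of size $\le k_1+1$; counting the pairs (watching the fixed point $N/2$ when $N$ is even, and keeping the relevant intervals disjoint from each other and from $r$) against this bound, together with $m\ge 2k_1+2$, yields strong restrictions.

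Concretely I would prove the following. \textbf{(i)} $x_i\le 3$ for all $i$: if $x_r\ge 4$, the Kunz inequality on pairs summing to $r$ forces $\ge\lceil(r-1)/2\rceil$ indices in $[1,r-1]$ to have value $\ge 2$, and the Kunz inequality on pairs summing to $m+r$ forces $\ge\lceil(m-1-r)/2\rceil$ such indices in $[r+1,m-1]$; adding these and $r$ itself gives $\#\{i:x_i\ge 2\}\ge m/2$, hence $m=2k_1+2$, and then tightness of $\sum_{i:x_i\ge 2}(x_i-1)=k_1+1=\#\{i:x_i\ge 2\}$ forces every $x_i\in\{1,2\}$, a contradiction. \textbf{(ii)} If $x_r=3$ then $mx_r+r\notin\A(S)$: otherwise the second reducibility clause failing says no pair summing to $m+r$ has both values $1$, which with the Kunz inequality on pairs summing to $r$ again yields $\#\{i:x_i\ge 2\}\ge m/2$ and the same contradiction. \textbf{(iii)} If $x_r=3$ then $r\le 2k_1+1$: the Kunz inequality on pairs summing to $r$ already gives $\lceil(r-1)/2\rceil\le k_1$. \textbf{(iv)} If $x_r=2$ and $r>2k_1+1$ then $mx_r+r\notin\A(S)$: here $2m+r$ is reducible iff there exist $j_1+j_2=r$ with $x_{j_1}=x_{j_2}=1$ (the second clause is vacuous for $x_r=2$), and the failure of this would force $\lceil(r-1)/2\rceil\le k_1$, contradicting $r\ge 2k_1+2$. \textbf{(v)} If $x_r=2$ and $r\le 2k_1+1$, then $mx_r+r\notin\A(S)$ iff there exist $j_1+j_2=r$ with $x_{j_1}=x_{j_2}=1$; since then $j_1,j_2\le r-1\le 2k_1$, this is exactly the condition that $r$ is one of the indices counted by $c(\overline{x})$.

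Finally I would assemble the count. By (iii), every $r$ with $x_r=3$ satisfies $r\le 2k_1+1$, so $\#\{r:x_r=3\}=b(\overline{x})$, and by (ii) all of these contribute to $R$. By (v), the indices $r\le 2k_1+1$ with $x_r=2$ that contribute to $R$ are precisely those counted by $c(\overline{x})$; by (iv), every index $r>2k_1+1$ with $x_r=2$ contributes to $R$. From the budget identity and (i) we have $\#\{r:x_r=2\}+2\#\{r:x_r=3\}=k_1+1$, so $\#\{r>2k_1+1:x_r=2\} = (k_1+1-2b(\overline{x}))-a(\overline{x})$. Adding these pieces,
\[
R = b(\overline{x}) + c(\overline{x}) + \bigl(k_1+1-2b(\overline{x})-a(\overline{x})\bigr) = k_1+1-a(\overline{x})-b(\overline{x})+c(\overline{x}),
\]
and therefore $e(S) = m - R = (m+k_1) - 2k_1 - 1 + a(\overline{x})+b(\overline{x})-c(\overline{x}) = g - 2k_1 - 1 + a(\overline{x})+b(\overline{x})-c(\overline{x})$. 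The step I expect to be the main obstacle is the uniform control in (i)--(iv): the pair-counting must be carried out carefully across all choices of $r$, the intervals $[1,r-1]$ and $[r+1,m-1]$ and the fixed points must be tracked precisely, and the boundary case $m=2k_1+2$ — where all the above inequalities become equalities — has to be dispatched separately via the tightness observation that $\#\{i:x_i\ge 2\}=k_1+1=\sum_{i:x_i\ge 2}(x_i-1)$ forces every $x_i\in\{1,2\}$.
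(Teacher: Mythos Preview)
Your proof is correct and follows essentially the same approach as the paper: the paper proves this as part (6) of Proposition~\ref{kunz tuple necessary cond}, establishing the same characterization of $\A(S)$ via the same pair-counting against the budget $\sum_i(x_i-1)=k_1+1$, and arriving at the identical computation. The only organizational difference is that the paper isolates your steps (i)--(iv) as separate preparatory lemmas (Lemmas~\ref{ki in 1,2,3}, \ref{ki=3}, \ref{ki=2 is gen}, \ref{ki=3 is gen})---in particular citing \cite{Kaplan} for $x_i\le 3$ via the hypothesis $2g<3m+2$ rather than your direct pair-counting argument---whereas you reprove them inline; the substance and the final count are the same.
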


Before proving Theorem \ref{thm:bijection}, we show how to use it to prove Theorem \ref{large embedding dimension}.
\begin{proposition}\label{Count k_1,k_2}
Fix integers $k_1, k_2$ satisfying $-1 \le k_1 \le k_2$ and $g\geq 4k_1+3$.  Suppose $\overline{x}=(x_1,\dots,x_{2k_1+1}) \in \{1,2,3\}^{2k_1 +1}$ satisfies the following conditions:
\begin{enumerate}
    \item Whenever $i_1, i_2, i_3\in [1,2k_1+1]$ satisfy $i_1+i_2=i_3$, we have $(x_{i_1},x_{i_2},x_{i_3})\neq(1,1,3)$.
    \item $a(\overline{x})+b(\overline{x})-c(\overline{x})=2k_1+1-k_2$.
    \item $a(\overline{x})+2b(\overline{x})\leq k_1+1$.
\end{enumerate}
The number of numerical semigroups $S$ for which $g(S)=g,\ m(S)=g-k_1,\ e(S)=g-k_2$, and the first $2k_1+1$ coordinates of $\KV_m(S)$ are given by $\overline{x}$ is $\binom{g-3k_1-2}{k_1+1-a(\overline{x})-2b(\overline{x})}.$
\end{proposition}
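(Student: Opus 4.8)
The plan is to apply Theorem~\ref{thm:bijection} with the roles of $k_1,k_2$ played by the given integers, after translating the multiplicity condition. Here we are fixing $g(S)=g$ and $m(S)=g-k_1$, so with $m=g-k_1$ we have $g(S)=m+k_1$, which matches the hypothesis of Theorem~\ref{thm:bijection}; moreover $m\geq 2k_1+2$ is exactly the condition $g\geq 4k_1+3$, which we are assuming. The condition $e(S)=g-k_2$ becomes $e(S)=g(S)-k_2$, again matching the theorem. Therefore Theorem~\ref{thm:bijection} gives a bijection between the numerical semigroups $S$ with these invariants and sequences $(x_1,\dots,x_{m-1})$ satisfying conditions (1)--(5) of that theorem.

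Next I would restrict this bijection to those $S$ whose first $2k_1+1$ Kunz coordinates are the prescribed tuple $\overline{x}=(x_1,\dots,x_{2k_1+1})$. Under the bijection, fixing these coordinates means we count exactly the sequences $(x_1,\dots,x_{m-1})$ extending $\overline{x}$ and satisfying conditions (1)--(5). Conditions (3) and (5) of Theorem~\ref{thm:bijection} depend only on $\overline{x}$; these are precisely hypotheses (1) and (2) of the proposition, so they are automatically satisfied. Condition (2) of Theorem~\ref{thm:bijection} forces $x_i\in\{1,2\}$ for every $i$ in $[2k_1,m-1]$; since $i=2k_1+1$ already lies in the fixed block, the remaining free coordinates are $x_{2k_1+2},\dots,x_{m-1}$, each of which must lie in $\{1,2\}$ (condition~(2)), and there is no further constraint from condition~(1) or~(3) on these indices (all relevant index sums fall outside $[1,2k_1+1]$). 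Finally, condition (4) of Theorem~\ref{thm:bijection} says exactly that the number of indices $i\in[2k_1+2,m-1]$ with $x_i=2$ must equal $k_1+1-a(\overline{x})-2b(\overline{x})$; hypothesis (3) of the proposition guarantees this target value is a nonnegative integer, so such extensions exist.

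It then remains to count binary sequences $(x_{2k_1+2},\dots,x_{m-1})$ with values in $\{1,2\}$ having exactly $k_1+1-a(\overline{x})-2b(\overline{x})$ entries equal to $2$. The number of free positions is $(m-1)-(2k_1+2)+1=m-2k_1-2=(g-k_1)-2k_1-2=g-3k_1-2$, so the count is
\[
\binom{g-3k_1-2}{\,k_1+1-a(\overline{x})-2b(\overline{x})\,},
\]
which is the claimed formula. The only real content beyond bookkeeping is checking that conditions (1)--(5) of Theorem~\ref{thm:bijection} decouple in the way described — that no condition couples a free coordinate $x_i$ with $i\geq 2k_1+2$ to another free coordinate — and this is immediate because conditions (1), (3) only involve indices in $[1,2k_1+1]$ (condition (1) being vacuous once (2), (3) are imposed on that block, as values there already lie in $\{1,2,3\}$), condition (2) is a pointwise constraint, and conditions (4), (5) are aggregate count constraints. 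The main (mild) obstacle is simply making sure the index arithmetic $m=g-k_1$ is carried consistently and that the edge case $k_1=-1$ (where the block $[1,2k_1+1]$ is empty, $\overline{x}$ is the empty tuple, and $a(\overline{x})=b(\overline{x})=c(\overline{x})=0$) is handled — there the formula reads $\binom{g-1}{0}=1$, matching that $\{0,g+1\to\}$ is the unique semigroup of genus $g$ with $e(S)=g+1$.
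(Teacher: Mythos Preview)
Your approach is essentially the same as the paper's: both reduce to Theorem~\ref{thm:bijection}, fix the initial block $\overline{x}$, observe that the remaining coordinates $x_{2k_1+2},\dots,x_{m-1}$ are free elements of $\{1,2\}$ subject only to the count constraint (4), and conclude with the binomial coefficient. Two minor corrections: first, the translation ``$m\geq 2k_1+2$ is exactly the condition $g\geq 4k_1+3$'' is wrong arithmetic --- with $m=g-k_1$ one gets $g\geq 3k_1+2$, which is weaker; the stronger hypothesis $g\geq 4k_1+3$ is what guarantees $k_1+1-a(\overline{x})-2b(\overline{x})\leq g-3k_1-2$, i.e.\ that the prescribed number of $2$'s does not exceed the number of free positions (the paper notes this explicitly, you do not). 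Second, condition~(2) of Theorem~\ref{thm:bijection} also constrains $x_{2k_1}$ and $x_{2k_1+1}$ to lie in $\{1,2\}$; you should observe that this is automatically forced by hypotheses (1) and (3) of the proposition, since $x_{2k_1}=3$ or $x_{2k_1+1}=3$ together with condition~(1) would force $a(\overline{x})+2b(\overline{x})\geq k_1+2$.
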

\begin{proof}
Suppose $S$ satisfies $g(S)=g,\ m(S) = m = g-k_1,\ e(S) = g-k_2$, and $\KV_m(S) = (x_1,\ldots, x_{2k_1+1}, k_{2k_1 +2},\ldots, k_{m-1})$.  Since $g(S) = g$, Theorem \ref{thm:bijection} implies that the number of $k_{2k_1 +2},\ldots, k_{m-1}$ equal to $2$ must be $k_1+1-a(\overline{x})-2b(\overline{x})$, and the rest of the elements must be equal to $1$.  Note that $(m-1)-(2k_1+1) = g-3k_1-2$.  Therefore, we have $\binom{g-3k_1-2}{k_1+1-a(\overline{x})-2b(\overline{x})}$ choices for $k_{2k_1 +2},\ldots, k_{m-1}$.  Theorem \ref{thm:bijection} says that each choice gives a semigroup satisfying the properties we are looking for, and that these are all such semigroups. Note that condition (3) ensures that $k+1-a(\overline{x})-2b(\overline{x})\geq 0$ and the condition $g\geq 4k_1+3$ ensures that $k_1+1-a(\overline{x})-2b(\overline{x})\leq g-3k_1-2.$
\end{proof}

Consider the collection of all sequences $\overline{x}$ satisfying the conditions of Theorem \ref{thm:bijection}, but now where we allow $k_2$ to vary.  This leads to the following definition.  
\begin{defn}
Fix $k \in \Z_{\ge 0}$. Let $\Y(k)$ be the collection of all tuples $\overline{x}=(x_1,\dots,x_{2k+1})\in\{1,2,3\}^{2k+1}$ satisfying the following conditions:
\begin{enumerate}
    \item Whenever $i_1, i_2, i_3\in [1,2k+1]$ satisfy $i_1+i_2=i_3$, we have $(x_{i_1},x_{i_2},x_{i_3})\neq(1,1,3)$. 
    \item $a(\overline{x})+2b(\overline{x})\leq k+1$.
\end{enumerate}
\end{defn}
\begin{thm}
Fix an integer $k\geq -1$. For $g\geq 4k+3$ we have
\[
\#\{S\in\S_g\mid m(S)=g-k\} =\sum_{\overline{x}\in\Y(k)}\binom{g-3k-2}{k+1-a(\overline{x})-2b(\overline{x})}.
\]
\end{thm}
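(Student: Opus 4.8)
The plan is to obtain this as an immediate consequence of Theorem~\ref{thm:bijection}, Proposition~\ref{embedding_overlinex}, and Proposition~\ref{Count k_1,k_2}, all applied with $k_1 = k$. Throughout, write $m = g-k$, so that a numerical semigroup $S$ with $m(S) = g-k$ and $g(S) = g$ satisfies $g(S) = m(S) + k$; since $k \ge -1$ and $g \ge 4k+3$ we have $m - (2k+2) = g - 3k - 2 \ge k+1 \ge 0$, so the hypothesis $m \ge 2k_1+2$ of those results is met. (When $k=-1$ the tuple $\overline{x}$ is empty, $\Y(-1)$ consists of the empty tuple alone, and both sides equal $1$.)

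First I would partition $\{S \in \SS_g : m(S) = g-k\}$ according to the tuple $\overline{x} = (x_1,\ldots,x_{2k+1})$ formed by the first $2k+1$ coordinates of $\KV_m(S)$. Theorem~\ref{thm:bijection}, applied with $k_1 = k$ and $k_2 = g - e(S)$, shows that each such $\overline{x}$ lies in $\{1,2,3\}^{2k+1}$, avoids the pattern $(x_{i_1},x_{i_2},x_{i_3}) = (1,1,3)$ when $i_1+i_2=i_3$, and---by conditions (4) and (5) of that theorem, as recorded in the remark following it---satisfies $a(\overline{x}) + 2b(\overline{x}) \le k+1$. Hence every block of this partition is indexed by some $\overline{x} \in \Y(k)$.

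Next I would show that, conversely, every $\overline{x} \in \Y(k)$ indexes a nonempty block, and identify it. Given $\overline{x} \in \Y(k)$, put $k_2 := 2k+1 - \bigl(a(\overline{x}) + b(\overline{x}) - c(\overline{x})\bigr)$. Since every index counted by $c(\overline{x})$ carries the value $2$, we have $c(\overline{x}) \le a(\overline{x})$, whence $a(\overline{x}) + b(\overline{x}) - c(\overline{x}) \ge b(\overline{x}) \ge 0$ and so $k_2 \le 2k+1$; and from $c(\overline{x}) \ge 0$ together with $a(\overline{x}) + b(\overline{x}) \le a(\overline{x}) + 2b(\overline{x}) \le k+1$ we get $k_2 \ge k$. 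Thus $-1 \le k \le k_2$, and the hypotheses of Proposition~\ref{Count k_1,k_2} hold for $\overline{x}$ with $k_1 = k$, its condition (2) being precisely our definition of $k_2$. Moreover, by Proposition~\ref{embedding_overlinex} any $S$ with $m(S) = g-k$, $g(S) = g$, and first Kunz coordinates $\overline{x}$ has $e(S) = g - 2k - 1 + a(\overline{x}) + b(\overline{x}) - c(\overline{x}) = g - k_2$, so this value of $k_2$ is forced by $\overline{x}$. Hence the block of the partition indexed by $\overline{x}$ coincides with the family counted in Proposition~\ref{Count k_1,k_2}, and its size is $\binom{g-3k-2}{k+1-a(\overline{x})-2b(\overline{x})}$; this coefficient is well-defined and positive, since $0 \le k+1-a(\overline{x})-2b(\overline{x})$ by membership in $\Y(k)$ and $k+1-a(\overline{x})-2b(\overline{x}) \le k+1 \le g-3k-2$.

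Summing $\binom{g-3k-2}{k+1-a(\overline{x})-2b(\overline{x})}$ over $\overline{x} \in \Y(k)$ then yields the stated identity, since the blocks of the partition are indexed exactly by $\Y(k)$. The only nonroutine point is the bookkeeping of the preceding paragraph: one must check that the assignment $\overline{x} \mapsto k_2$ is well-defined and takes values in the admissible range $[k,2k+1]$, so that the $\Y(k)$-indexed sum reproduces exactly the union over all admissible $k_2$ of the families counted by Proposition~\ref{Count k_1,k_2}. I expect this to be the main---though fairly mild---obstacle.
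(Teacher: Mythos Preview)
Your proposal is correct and follows essentially the same approach as the paper: define $k_2 = 2k+1 - (a(\overline{x})+b(\overline{x})-c(\overline{x}))$ for each $\overline{x}\in\Y(k)$, verify $k_2\ge k$, apply Proposition~\ref{Count k_1,k_2}, and sum. Your write-up is in fact more thorough than the paper's in explicitly checking both directions of the correspondence between semigroups and tuples $\overline{x}\in\Y(k)$, whereas the paper leaves this implicit.
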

\begin{proof}
For $\overline{x}\in\Y(k)$, let $k_2=2k+1-a(\overline{x})-b(\overline{x})+c(\overline{x})$. We have
\[
k_2=k-a(\overline{x})-b(\overline{x})+c(\overline{x})+k+1\geq k+b(\overline{x})+c(\overline{x})\geq k.
\]
We apply Proposition \ref{Count k_1,k_2} for each $\overline{x}\in\Y(k)$ with the corresponding $k_2$ and add the results.
\end{proof}

In the notation of Theorem \ref{thm:count_mlarge}, this means that for each $k\ge -1$ and $g \ge 4k+3$, we have
\[
\frac{1}{(k+1)!}f_k(x)=\sum_{\overline{x}\in\Y(k)}\binom{x-3k-2}{k+1-a(\overline{x})-2b(\overline{x})}.
\]
At the end of this paper we list the sets $\Y(k)$ for $-1\le k \le 2$.  A simple computation gives $f_{-1}(x),\ldots, f_2(x)$. We see that they match the formulas in \cite[Corollary 14]{Kaplan}.

We return to the problem of counting semigroups $S\in \SS_g$ with large embedding dimension.
\begin{thm}\label{thm:large_embedding_binomials}
Fix an integer $l\ge -1$. For $g\geq 4l+3$ we have
\[
\#\{S\in\S_g\mid e(S)=g-l\} =
\sum_{k=-1}^{l}\sum_{\substack{\overline{x}\in\Y(k) \\ a(\overline{x})+b(\overline{x})-c(\overline{x})=2k+1-l}}\binom{g-3k-2}{k+1-a(\overline{x})-2b(\overline{x})}.
\]
\end{thm}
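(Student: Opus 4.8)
The plan is to partition the semigroups counted on the left-hand side according to their multiplicity, read off the size of each class from Proposition \ref{Count k_1,k_2}, and add the results.

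First I would pin down which multiplicities occur. If $S \in \SS_g$ has $e(S) = g-l$, write $m(S) = g-k$. Because $1,2,\dots,m(S)-1$ are all gaps of $S$ we have $m(S) \le g+1$, so $k \ge -1$; and because the minimal generators of $S$ other than $m(S)$ all lie in $\Ap(S;m(S))\setminus\{0\}$, a set of size $m(S)-1$, we have $e(S) \le m(S)$, hence $g-l \le g-k$ and $k \le l$. Therefore
\[
\#\{S \in \SS_g \mid e(S) = g-l\} = \sum_{k=-1}^{l} \#\{S \in \SS_g \mid m(S) = g-k,\ e(S) = g-l\},
\]
and it is enough to evaluate each summand.

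Next, fix $k$ with $-1 \le k \le l$ and apply Proposition \ref{Count k_1,k_2} with $k_1 = k$, $k_2 = l$; its hypotheses hold since $k \le l$ and $g \ge 4l+3 \ge 4k+3$. For a tuple $\overline{x}\in\{1,2,3\}^{2k+1}$, conditions (1)--(3) of that proposition say precisely that $\overline{x}\in\Y(k)$ and $a(\overline{x})+b(\overline{x})-c(\overline{x}) = 2k+1-l$, and in that case there are exactly $\binom{g-3k-2}{k+1-a(\overline{x})-2b(\overline{x})}$ numerical semigroups $S$ with $g(S)=g$, $m(S)=g-k$, $e(S)=g-l$, and first $2k+1$ Kunz coordinates (with respect to $m(S)$) equal to $\overline{x}$. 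These coordinates are well defined because $m(S)-1 = g-k-1 \ge 2k+1$, and distinct $\overline{x}$ give disjoint families; to see that these families cover all of $\{S\in\SS_g \mid m(S)=g-k,\ e(S)=g-l\}$, apply Theorem \ref{thm:bijection} with $(k_1,k_2)=(k,l)$: its condition (1) puts every Kunz coordinate in $\{1,2,3\}$, its conditions (3) and (5) are the no-$(1,1,3)$ restriction and $a+b-c=2k+1-l$, and the remark after Theorem \ref{thm:bijection} gives $a(\overline{x})+2b(\overline{x})\le k+1$, so the first $2k+1$ Kunz coordinates of any such $S$ indeed lie in $\{\overline{x}\in\Y(k)\mid a(\overline{x})+b(\overline{x})-c(\overline{x})=2k+1-l\}$. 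Summing over admissible $\overline{x}$ then gives
\[
\#\{S\in\SS_g \mid m(S)=g-k,\ e(S)=g-l\} = \sum_{\substack{\overline{x}\in\Y(k)\\ a(\overline{x})+b(\overline{x})-c(\overline{x})=2k+1-l}}\binom{g-3k-2}{k+1-a(\overline{x})-2b(\overline{x})},
\]
and substituting this into the sum over $k$ completes the argument. (When $2k+1 < l$ the inner sum is empty, consistent with the remark after Theorem \ref{thm:bijection} that $e(S)=g-l$ forces $l\le 2k+1$.)

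The mathematical content is already contained in Proposition \ref{Count k_1,k_2} and, behind it, Theorem \ref{thm:bijection}; what remains is essentially bookkeeping. The one step I would treat carefully is the covering claim above --- that the map sending $S$ to its first $2k+1$ Kunz coordinates has image exactly inside $\{\overline{x}\in\Y(k)\mid a(\overline{x})+b(\overline{x})-c(\overline{x})=2k+1-l\}$ --- which I would extract from Theorem \ref{thm:bijection} rather than re-deriving the Kunz inequalities by hand, together with the harmless check that $g\ge 4l+3$ guarantees $g\ge 3k+2$, so that ``first $2k+1$ coordinates'' makes sense.
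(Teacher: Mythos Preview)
Your proposal is correct and follows essentially the same approach as the paper: partition by multiplicity $m(S)=g-k$, use Theorem~\ref{thm:bijection} to see that the first $2k+1$ Kunz coordinates of any such $S$ lie in $\{\overline{x}\in\Y(k)\mid a(\overline{x})+b(\overline{x})-c(\overline{x})=2k+1-l\}$, and then count via Proposition~\ref{Count k_1,k_2}. Your write-up is more careful than the paper's in verifying the range $-1\le k\le l$ and the hypothesis $g\ge 4k+3$, but the structure is identical.
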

\begin{proof}
We divide up the semigroups in $\S_g$ with $e(S) = g-l$ by multiplicity and see that
\[
\{S\in\S_g\mid e(S)=g-l\}=\bigcup_{k=-1}^{l} \{S\in\S_g\mid e(S)=g-l,m(S)=g-k\}.
\]
By Theorem \ref{thm:bijection}, if $S\in S_g$ satisfies $e(S)=g-l$ and $m(S)=g-k$, then $\overline{x}\in \Y(k)$ and $a(\overline{x})+b(\overline{x})-c(\overline{x})=2k+1-l$. By Proposition \ref{Count k_1,k_2}, the number of numerical semigroups corresponding to a given $\overline{x}$ is $\binom{g-3k-2}{k+1-a(\overline{x})-2b(\overline{x})}$. The result follows.
\end{proof}

The only remaining thing needed to complete the proof of Theorem \ref{large embedding dimension} is to establish some basic properties of the polynomials on the right-hand side of Theorem \ref{thm:large_embedding_binomials}. Define 
\[
H_l(x)=\sum_{k=-1}^{l}\sum_{\substack{\overline{x}\in\Y(k) \\ a(\overline{x})+b(\overline{x})-c(\overline{x})=2k+1-l}}\binom{x-3k-2}{k+1-a(\overline{x})-2b(\overline{x})}.
\]

\begin{proposition}\label{deg of h_l}
Fix an integer $l\geq -1$. Let $l_1=\lfloor\frac{l+1}{2}\rfloor$. Then $H_l(x)$ is a polynomial of degree $l_1$ and $l_1!H_l(x)$ is a monic polynomial with integer coefficients.
\end{proposition}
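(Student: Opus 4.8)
The plan is to analyze the summands of $H_l(x)$ one at a time. Each summand is a binomial coefficient $\binom{x-3k-2}{k+1-a(\overline{x})-2b(\overline{x})}$, which, as a function of $x$, is a polynomial in $x$ of degree exactly $k+1-a(\overline{x})-2b(\overline{x})$ (provided this integer is nonnegative, which holds because $\overline{x}\in\Y(k)$ forces $a(\overline{x})+2b(\overline{x})\le k+1$). Moreover this polynomial has leading term $\frac{1}{(k+1-a(\overline{x})-2b(\overline{x}))!}x^{k+1-a(\overline{x})-2b(\overline{x})}$, and since $\binom{x-3k-2}{j}=\frac{(x-3k-2)(x-3k-3)\cdots(x-3k-1-j)}{j!}$, the polynomial $j!\binom{x-3k-2}{j}$ is monic with integer coefficients. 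So the whole of $H_l(x)$ is a $\Q$-linear combination of polynomials of known degrees, and I need to (i) identify the maximum degree appearing, (ii) show that the top-degree terms do not cancel, and (iii) track denominators to get the integrality statement.

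First I would determine, for each $k$ with $-1\le k\le l$, the largest value of $k+1-a(\overline{x})-2b(\overline{x})$ over $\overline{x}\in\Y(k)$ subject to the constraint $a(\overline{x})+b(\overline{x})-c(\overline{x})=2k+1-l$. The key observation is that $c(\overline{x})\le a(\overline{x})$ always (each index counted by $c$ has $x_i=2$), so the constraint gives $b(\overline{x})=2k+1-l-a(\overline{x})+c(\overline{x})\le 2k+1-l$, and more usefully $a(\overline{x})+2b(\overline{x}) \ge a(\overline{x}) + 2(2k+1-l) - 2a(\overline{x}) + 2c(\overline{x}) = 2(2k+1-l) - a(\overline{x}) + 2c(\overline{x})$. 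To make $k+1-a(\overline{x})-2b(\overline{x})$ as large as possible we want $a(\overline{x})+2b(\overline{x})$ as small as possible; using $c(\overline{x})\le a(\overline{x})$ together with the constraint, one finds $a(\overline{x})+2b(\overline{x}) \ge 2(2k+1-l)-a(\overline{x})+2c(\overline{x}) \ge 2(2k+1-l)-a(\overline x)$. Combined with $a(\overline x)+2b(\overline x)\ge a(\overline x) + 2(2k+1-l-a(\overline x)+c(\overline x))$, I would carefully optimize to show the minimum of $a(\overline{x})+2b(\overline{x})$ subject to the constraint is $2(2k+1-l)$, attained when $b(\overline x)=c(\overline x)=0$ and $a(\overline x)=2k+1-l$ (so that the constraint reads $a(\overline x)=2k+1-l$). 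This requires $2k+1-l\ge 0$, i.e.\ $k\ge\frac{l-1}{2}$, and also $2k+1-l\le k+1$ i.e.\ $k\le l$; such tuples exist in $\Y(k)$ for $g$ large (e.g.\ place the $2$'s at the top indices to avoid creating any $(1,1,3)$ pattern or any $c$-contribution, though I must check the $(1,1,3)$ condition — placing all $2$'s and no $3$'s makes that condition vacuous). For such $\overline{x}$ the degree contributed is $k+1-(2k+1-l)=l-k$. This is maximized over the valid range $\lceil\frac{l-1}{2}\rceil\le k\le l$ at the smallest admissible $k$, namely $k=\lceil\frac{l-1}{2}\rceil=\lfloor\frac{l+1}{2}\rfloor$ when... wait — I would instead note $l-k$ is maximized by minimizing $k$, and the minimum $k$ with $2k+1-l\ge 0$ is $k_0=\lceil\frac{l-1}{2}\rceil$, giving top degree $l-k_0=l-\lceil\frac{l-1}{2}\rceil=\lfloor\frac{l+1}{2}\rfloor=l_1$.

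Next I would verify there is no cancellation among the top-degree terms. The degree-$l_1$ contributions come only from $k=k_0$ and from those $\overline{x}\in\Y(k_0)$ with $a(\overline{x})+2b(\overline{x})$ minimal (necessarily $b(\overline x)=0$, $a(\overline x)=2k_0+1-l$, $c(\overline x)=0$); all such terms are $\binom{x-3k_0-2}{l_1}$, which has \emph{positive} leading coefficient $\frac{1}{l_1!}$, so they add rather than cancel. Since $\Y(k_0)$ contains at least one such $\overline{x}$, the coefficient of $x^{l_1}$ in $H_l(x)$ is a positive integer multiple of $\frac{1}{l_1!}$, so $\deg H_l = l_1$ exactly. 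For the integrality claim, I would observe that for every term the relevant $j=k+1-a(\overline{x})-2b(\overline{x})$ satisfies $j\le l_1$, so $l_1!\binom{x-3k-2}{j}=l_1\cdots(j+1)\cdot j!\binom{x-3k-2}{j}\in\Z[x]$; hence $l_1!H_l(x)\in\Z[x]$, and its leading coefficient equals the number of top-degree tuples $\overline x$, hmm — I would need that count to be exactly $1$ for monicity, so let me instead argue: the top-degree tuple with $b=c=0$, $a(\overline x)=2k_0+1-l$ is essentially unique up to arrangement, but different arrangements give the \emph{same} polynomial $\binom{x-3k_0-2}{l_1}$, so the leading coefficient of $l_1!H_l(x)$ is $N\cdot 1$ where $N$ is the number of admissible arrangements. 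To get monicity I would show $N=1$ by a parity/extremality argument — forcing the $2$'s into the topmost positions $2k_0+1-l$ indices of $\{1,\dots,2k_0+1\}$ is the unique placement avoiding both a $(1,1,3)$-violation (vacuous, no $3$'s) and any $c$-contribution, the latter requiring that no index carrying a $2$ be a sum $i_1+i_2$ of two indices both carrying $1$; I would check this pins down the configuration uniquely, giving $N=1$ and hence $l_1!H_l(x)$ monic.

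The main obstacle I anticipate is the combinatorial optimization in the second step: carefully proving that $\min(a(\overline{x})+2b(\overline{x}))=2(2k+1-l)$ subject to $a(\overline{x})+b(\overline{x})-c(\overline{x})=2k+1-l$ and $\overline{x}\in\Y(k)$, and then pinning down exactly which $(k,\overline{x})$ achieve the top degree and showing the achieving configuration is unique (for monicity). The inequality $c(\overline{x})\le a(\overline{x})$ is the crucial input, but I will need to handle the interaction between the $c$-count and the forbidden-$(1,1,3)$ condition with some care, and confirm that the extremal tuples genuinely lie in $\Y(k)$ for all $g\ge 4l+3$. The rest — that binomial coefficients are polynomials of the stated degree with the stated leading coefficients, and the denominator bookkeeping — is routine.
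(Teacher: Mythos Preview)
Your overall approach matches the paper's: bound the degree of each summand via the constraint $a+b-c=2k+1-l$ together with $c\le a$, locate the extremal $(k,\overline{x})$, and check integrality termwise. Two remarks.

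First, a bookkeeping slip: you assert that the minimum of $a(\overline{x})+2b(\overline{x})$ subject to the constraint is $2(2k+1-l)$, but in the very next line you (correctly) use $2k+1-l$ to get degree $l-k$. The paper avoids this optimization entirely by substituting the constraint directly: $k+1-a-2b = l-k-b-c \le l-k$, and separately $2k+1 = l + (a-c) + b \ge l$ gives $k \ge \frac{l-1}{2}$, hence $l-k \le l_1$. This is cleaner and immediately shows that equality forces $b=c=0$ and $k=l-l_1$.

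Second, and more substantively, your plan for monicity has a genuine gap. You propose that placing the $2$'s in the ``topmost positions'' is the unique placement with $c=0$, but this placement is not even valid: if the tuple has a single $2$ at position $2k_0+1$ and all other entries equal to $1$, then $1 + 2k_0 = 2k_0+1$ with $x_1=x_{2k_0}=1$ and $x_{2k_0+1}=2$, so $c=1$, not $0$. The fix is to actually compute $a(\overline{x})$ at the extremum. With $b=c=0$ the constraint reads $a = 2k_0+1-l$, and since $k_0 = \lceil\frac{l-1}{2}\rceil$ this equals $0$ when $l$ is odd and $1$ when $l$ is even. If $a=0$ the tuple is $(1,\ldots,1)$, trivially unique. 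If $a=1$ there is a single $2$; to have $c=0$ its position $i$ must admit no decomposition $i=j_1+j_2$ with $j_1,j_2\ge 1$ (since every other entry is $1$), which forces $i=1$, so the tuple is $(2,1,\ldots,1)$, again unique. Hence $N=1$ and $l_1!H_l(x)$ is monic.
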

\noindent Proving this statement completes the proof of Theorem \ref{large embedding dimension}.
\begin{proof}
The degree of $H_l(x)$ is
\[
\max\{k+1-a(\overline{x})-2b(\overline{x})\mid -1\leq k\leq l,\ \overline{x}\in \Y(k),2k+1+c(\overline{x})-a(\overline{x})-b(\overline{x})=l\}.
\]
Suppose that $-1\leq k\leq l$ and $\overline{x}\in \Y(k)$ satisfies $2k+1+c(\overline{x})-a(\overline{x})-b(\overline{x})=l$. We have
\[
2k+1=l+(a(\overline{x})-c(\overline{x}))+b(\overline{x})\geq l.
\]
This means that $k\geq \frac{l-1}{2}$.
Next,
\[
k+1-a(\overline{x})-2b(\overline{x})=l-k-c(\overline{x})-b(\overline{x})\leq l-k\leq \frac{l+1}{2}.
\]
This implies $\deg(H_l(x))\leq \frac{l+1}{2}$. Since $\deg(H_l(x))$ is an integer, we see that $\deg(H_l(x))\leq l_1$. Note that $k+1-a(\overline{x})-2b(\overline{x})=l_1$ if and only if $b(\overline{x})=c(\overline{x})=0$ and $k=l-l_1$.
\begin{itemize}[leftmargin=*]
    \item If $l$ is odd, then $l=2l_1-1$. Take $k=l-l_1=l_1-1$. If $\overline{x}\in\Y(k)$ satisfies $b(\overline{x})=c(\overline{x})=0$, then
    \[
    l=2k+1-a(\overline{x})=2l_1-1+a(\overline{x})=l-a(\overline{x}).
    \]
    This implies $a(\overline{x})=0$. There is a unique such $\overline{x}$, which is $\overline{x}=(1,1,\dots,1)\in\Y(k)$.
    
    \item If $l$ is even, then $l=2l_1$. Take $k=l-l_1=l_1$.  If $\overline{x}\in\Y(k)$ satisfies $b(\overline{x})=c(\overline{x})=0$, then
    \[
    l=2k+1-a(\overline{x})=2l_1+1+a(\overline{x})=l+1-a(\overline{x}).
    \]
    This implies $a(\overline{x})=1$. There is a unique such $\overline{x}$, which is $\overline{x}=(2,1,\dots,1)\in\Y(k)$.
\end{itemize}
This completes the proof.
\end{proof}

\subsection{The proof of Theorem \ref{thm:bijection}.}

The goal of the rest of this section is to prove Theorem~\ref{thm:bijection}. 

We need several facts about the Kunz coordinate vector of a numerical semigroup with multiplicity $m$ and genus $g$.

\begin{lemma}\cite[Lemma 11]{Kaplan}\label{ki in 1,2,3}
Suppose $S$ is a numerical semigroup with $g(S)=g$, $m(S)=m$, and $\KV_m(S) = (x_1,\dots,x_{m-1})$. If $2g<3m+2$, then $\{k_1,\dots,k_{m-1}\}\subseteq \{1,2,3\}$.
\end{lemma}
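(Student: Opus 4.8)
The plan is to argue by contradiction, using only the elementary bound $F(S) \le 2g(S) - 1$ that holds for every numerical semigroup (the same bound the paper invokes as a basic fact in Section \ref{Sec: weight}). First I would translate the hypothesis $k_j \ge 4$ into information about the gaps of $S$ via the Apéry set. Writing $\Ap(S;m) = \{0, a_1, \ldots, a_{m-1}\}$ with $a_i = k_i m + i$, the defining property of the Apéry set gives $a_i - m \notin S$, so each $a_i - m = (k_i - 1)m + i$ is a gap of $S$; in fact, listing the gaps residue class by residue class shows $F(S) = \max_{1 \le i \le m-1}\bigl((k_i - 1)m + i\bigr)$, though for the proof I only need the inequality $F(S) \ge (k_i - 1)m + i$ for each $i$. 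I would also recall (as noted just before Theorem \ref{Kunz_bijection}) that $k_i \ge 1$ for all $i$ since $m = m(S)$.

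Next, suppose for contradiction that $k_j \ge 4$ for some $j \in [1, m-1]$. Then $F(S) \ge (k_j - 1)m + j \ge 3m + j \ge 3m + 1$, where the last step uses $j \ge 1$. Combining this with $F(S) \le 2g - 1$ yields $2g - 1 \ge 3m + 1$, that is, $2g \ge 3m + 2$. This contradicts the hypothesis $2g < 3m + 2$. Hence $k_i \le 3$ for every $i$, and together with $k_i \ge 1$ we conclude $\{k_1, \ldots, k_{m-1}\} \subseteq \{1, 2, 3\}$.

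There is essentially no obstacle here: the lemma is a short consequence of the extremal bound on the Frobenius number, and the only point requiring a little care is retaining the $+j$ term so that $k_j \ge 4$ genuinely forces $F(S) \ge 3m + 1$ (and hence $2g \ge 3m+2$) rather than merely $F(S) \ge 3m$. If one prefers a fully self-contained argument, the bound $F(S) \le 2g(S) - 1$ can be inserted inline via the standard observation that for each $x \in [1, F(S) - 1]$ at least one of $x$ and $F(S) - x$ is a gap, which — adding the gap $F(S)$ itself — gives $g(S) \ge (F(S)+1)/2$.
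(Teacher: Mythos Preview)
The paper does not supply its own proof of this lemma; it merely cites it as \cite[Lemma 11]{Kaplan}. Your argument is correct and complete: the observation that $a_j - m = (k_j-1)m + j$ is a gap (by the defining property of the Ap\'ery set), combined with the elementary bound $F(S)\le 2g-1$, gives $2g-1\ge 3m+1$ and hence the desired contradiction. Your remark about retaining the $+j\ge 1$ contribution is exactly the right point of care, since the weaker inequality $F(S)\ge 3m$ would only yield $2g\ge 3m+1$, which does not contradict $2g<3m+2$.
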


\begin{lemma}\label{ki=3}
Suppose $S$ is a numerical semigroup with $g(S)=g$ and $m(S)=m$. Let $\KV_m(S) =(x_1,\dots,x_{m-1})$ If $i\in [1,m-1]$ satisfies $k_i=3$, then
\[
g\geq m+1+\left\lceil\frac{i-1}{2}\right\rceil.
\]
\end{lemma}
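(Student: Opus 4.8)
The plan is to translate the statement into an inequality about the Kunz coordinate vector and then exploit the Kunz inequalities recorded in Theorem~\ref{Kunz_bijection}. Write $(k_1,\dots,k_{m-1}) = \KV_m(S)$, so the genus condition reads $\sum_{j=1}^{m-1} k_j = g$, and since every $k_j \ge 1$ the desired bound $g \ge m+1+\lceil\frac{i-1}{2}\rceil$ is equivalent to $\sum_{j=1}^{m-1}(k_j-1) \ge 2 + \lceil\frac{i-1}{2}\rceil$. The hypothesis $k_i = 3$ already contributes $k_i - 1 = 2$ to this sum, so it suffices to show that the indices $j \in [1,i-1]$ (all of which are distinct from $i$) contribute at least $\lceil\frac{i-1}{2}\rceil$.

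To achieve this, I would pair up the indices $1,2,\dots,i-1$ as $\{1,i-1\},\{2,i-2\},\dots$. For a pair $\{a,i-a\}$ with $a < i-a$, the Kunz inequality $k_a + k_{i-a} \ge k_{a+(i-a)} = k_i = 3$ applies, since $a + (i-a) = i \le m-1 < m$ and $1 \le a < i - a \le m-1$; as $k_a, k_{i-a} \ge 1$, this forces $(k_a - 1) + (k_{i-a}-1) \ge 1$. When $i$ is even there is a leftover middle index $t = i/2$, for which $2k_t \ge k_i = 3$ together with integrality gives $k_t \ge 2$, i.e.\ $k_t - 1 \ge 1$. Counting: if $i$ is odd there are $\frac{i-1}{2}$ such pairs and no middle index; if $i$ is even there are $\frac{i}{2}-1$ pairs together with the middle index $t$. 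In either case $\sum_{j=1}^{i-1}(k_j - 1) \ge \lceil\frac{i-1}{2}\rceil$, and adding the contribution $2$ from $k_i$ yields the claim. The degenerate case $i = 1$ is immediate, since then the asserted bound is just $g \ge m+1$, which follows from $k_i - 1 = 2$ and $\sum_j (k_j-1)\ge k_i-1$.

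There is no substantive obstacle here; the argument is a short inclusion count built on the two families of Kunz inequalities. The only points requiring mild care are verifying that both indices $a$ and $i-a$ lie in $[1,m-1]$ and are distinct from $i$ (immediate from $1 \le a < i \le m-1$), correctly matching the count of pairs plus possible middle index to $\lceil\frac{i-1}{2}\rceil$ in each parity class, and noting that $k_t = 2$ follows from $2k_t \ge 3$ only because $k_t$ is an integer.
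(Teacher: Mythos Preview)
Your proof is correct and is essentially the same as the paper's: both partition $[1,i-1]$ into pairs $\{j_1,j_2\}$ with $j_1+j_2=i$, use the Kunz inequality $k_{j_1}+k_{j_2}\ge k_i=3$ to force each pair to contribute at least $1$ to $\sum_j(k_j-1)$, and add the contribution $2$ from $k_i-1$. You are slightly more explicit than the paper in separating the even-$i$ middle index case, but the argument is the same. (Minor slip: in your final paragraph you write ``$k_t=2$ follows from $2k_t\ge 3$''; you mean $k_t\ge 2$, as you correctly stated earlier.)
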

\begin{proof}
The set $[1,i-1]$ can be partitioned as a union of $\lceil\frac{i-1}{2}\rceil$ subsets of the form $\{j_1,j_2\}$ with $j_1+j_2= i$. For each such $\{j_1,j_2\}$, at least one of $x_{j_1},x_{j_2}$ must be at least $2$. Therefore,
\[
g-(m-1)=\sum_{j=1}^{m-1}(x_j-1)\geq 2+\left\lceil\frac{i-1}{2}\right\rceil.\qedhere
\]
\end{proof}

\begin{lemma}\label{ki=2 is gen}
Suppose $S$ is a numerical semigroup with $g(S)=g$ and $m(S)=m$. Let $\KV_m(S) = (x_1,\dots,x_{m-1})$. If $i\in [1,m-1]$ satisfies $k_i=2$ and $mk_i+i\in\A(S)$, then
\[
g\geq m+\left\lceil\frac{i-1}{2}\right\rceil.
\]
\end{lemma}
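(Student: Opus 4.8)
The plan is to mimic the proof of Lemma~\ref{ki=3}, using the hypothesis $mk_i+i\in\A(S)$ in place of the Kunz inequality to force one coordinate in each pair of indices summing to $i$ to be at least $2$.

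First I would unwind what $mk_i+i\in\A(S)$ means through the Proposition describing $\A(S)$ in terms of the Kunz coordinate vector. Since $k_i=2$, the clause forbidding $j_1,j_2\in[1,m-1]$ with $j_1+j_2=m+i$ and $k_{j_1}+k_{j_2}+1=k_i$ holds automatically, as it would require $k_{j_1}+k_{j_2}=1$, which is impossible because each $k_j\ge 1$. Hence $mk_i+i\in\A(S)$ is equivalent to the statement that there is no pair $j_1,j_2\in[1,m-1]$ with $j_1+j_2=i$ and $k_{j_1}=k_{j_2}=1$; since $i\le m-1$ this is the same as saying that for every pair of positive integers $j_1,j_2$ with $j_1+j_2=i$ we have $\max(k_{j_1},k_{j_2})\ge 2$. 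When $i$ is even this statement includes the degenerate pair $j_1=j_2=i/2$, so it also forces $k_{i/2}\ge 2$.

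Next, exactly as in Lemma~\ref{ki=3}, I would partition $[1,i-1]$ into $\lceil\frac{i-1}{2}\rceil$ blocks of the form $\{j_1,j_2\}$ with $j_1+j_2=i$, where the block $\{i/2\}$ is a singleton when $i$ is even. By the previous paragraph each block contains an index $j$ with $k_j\ge 2$, so $\sum_{j=1}^{i-1}(k_j-1)\ge\lceil\frac{i-1}{2}\rceil$. Since $i\notin[1,i-1]$ and $k_i-1=1$, adding in the $i$-th coordinate gives
\[
g-(m-1)=\sum_{j=1}^{m-1}(k_j-1)\ \ge\ (k_i-1)+\sum_{j=1}^{i-1}(k_j-1)\ \ge\ 1+\left\lceil\frac{i-1}{2}\right\rceil,
\]
which rearranges to $g\ge m+\lceil\frac{i-1}{2}\rceil$. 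The argument is routine; the only points needing care are checking that the second clause in the characterization of $\A(S)$ is vacuous here and handling the even-$i$ case where the middle ``pair'' degenerates to a singleton, neither of which poses a real obstacle.
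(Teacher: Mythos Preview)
Your proof is correct and follows essentially the same approach as the paper's: partition $[1,i-1]$ into $\lceil\frac{i-1}{2}\rceil$ blocks $\{j_1,j_2\}$ with $j_1+j_2=i$, use the minimal-generator hypothesis to force each block to contain a coordinate at least $2$, and then bound $g-(m-1)=\sum_{j}(k_j-1)$ from below. The paper's version is terser---it simply asserts that in each pair at least one coordinate is $\ge 2$ without spelling out the $\A(S)$ characterization or the even-$i$ singleton case---but the argument is the same.
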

\begin{proof}
The set $[1,i-1]$ can be partitioned as a union of $\lceil\frac{i-1}{2}\rceil$ subsets of the form $\{j_1,j_2\}$ with $j_1+j_2= i$. For each such $\{j_1,j_2\}$, at least one of $x_{j_1},x_{j_2}$ must be at least $2$. Therefore,
\[
g-(m-1)=\sum_{j=1}^{m-1}(x_j-1)\geq 1+\left\lceil\frac{i-1}{2}\right\rceil.\qedhere
\]
\end{proof}

\begin{lemma}\label{ki=3 is gen}
Suppose $S$ is a numerical semigroup with $g(S)=g$ and $m(S)=m$. Let $\KV_m(S) =(x_1,\dots,x_{m-1})$. If $3m+i \in\A(S)$ for some $i \in [1,m-1]$, then $g\geq \frac{3m}{2}$.
\end{lemma}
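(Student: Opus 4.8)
The plan is to extend the counting arguments from the proofs of Lemmas~\ref{ki=3} and~\ref{ki=2 is gen}, but now extracting obstructions from \emph{two} families of index pairs rather than one. Write $(x_1,\dots,x_{m-1}) = \KV_m(S)$, so $x_l = k_l \ge 1$ for all $l$ and $\sum_{l=1}^{m-1}(x_l-1) = g-(m-1)$ by Theorem~\ref{Kunz_bijection}. First I would show $k_i = 3$: since $3m+i \in \A(S) \subseteq S$ we get $k_i \le 3$, and since $m \in S$, if $2m+i \in S$ then $3m+i = m+(2m+i)$ would be a sum of two nonzero elements of $S$, contradicting $3m+i \in \A(S)$; hence $2m+i \notin S$ and $k_i \ge 3$. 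Thus $x_i = 3$, contributing $x_i - 1 = 2$ to $\sum_{l=1}^{m-1}(x_l-1)$.

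Next I would record two elementary obstructions forced by $3m+i\in\A(S)$. (a) If $j_1, j_2 \in [1,m-1]$ satisfy $j_1+j_2 = i$ and $k_{j_1} = k_{j_2} = 1$, then $m+j_1 \in S$ and $2m+j_2 = (m+j_2)+m \in S$, so $3m+i = (m+j_1)+(2m+j_2)$ contradicts $3m+i \in \A(S)$. (b) If $j_1, j_2 \in [1,m-1]$ satisfy $j_1+j_2 = m+i$ and $k_{j_1} = k_{j_2} = 1$, then $m+j_1, m+j_2 \in S$ and $3m+i = (m+j_1)+(m+j_2)$, again a contradiction. The diagonal cases $j_1=j_2$ are included: they say $k_{i/2}\ge 2$ when $i$ is even, and $k_{(m+i)/2}\ge 2$ when $m+i$ is even (note that $m+i$ even forces $i\le m-2$, so the index $(m+i)/2$ really lies in $[1,m-1]$). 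In particular, in each unordered pair $\{j_1,j_2\}$ arising in (a) or (b), at least one index $l$ has $x_l \ge 2$.

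To finish, partition $[1,m-1]\setminus\{i\} = [1,i-1] \sqcup [i+1,m-1]$. The first block splits into $\left\lceil\frac{i-1}{2}\right\rceil$ subsets $\{j_1,j_2\}$ with $j_1+j_2 = i$ (automatically in $[1,i-1]$), and the second into $\left\lceil\frac{m-1-i}{2}\right\rceil$ subsets $\{j_1,j_2\}$ with $j_1+j_2 = m+i$ (automatically in $[i+1,m-1]$, since $j_1,j_2\le m-1$ forces $j_1,j_2\ge i+1$). By the previous step each such subset contributes at least $1$ to $\sum_{l\ne i}(x_l-1)$, so
\[
g - (m-1) = \sum_{l=1}^{m-1}(x_l - 1) \ \ge\ 2 + \left\lceil\frac{i-1}{2}\right\rceil + \left\lceil\frac{m-1-i}{2}\right\rceil \ \ge\ 2 + \left\lceil\frac{m-2}{2}\right\rceil = 1 + \left\lceil\frac{m}{2}\right\rceil,
\]
and therefore $g \ge m + \left\lceil\frac{m}{2}\right\rceil \ge \frac{3m}{2}$.

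I do not expect a genuine obstacle: the argument is of the same flavor as the surrounding lemmas, and for the lower bound we only need the two displayed decompositions to be obstructions, which is the easy direction (any further ways of writing $3m+i$ as a sum would only strengthen the conclusion). The only points needing a little care are bookkeeping — confirming that the pairs in (b) really range over $[i+1,m-1]$, handling blocks of odd size via the singleton ``diagonal'' subsets, and the trivial inequality $\lceil a\rceil+\lceil b\rceil\ge\lceil a+b\rceil$ used in the last display. If anything, the touchiest phrasing is making obstruction (a) transparently subsume the case $j_1=j_2=i/2$.
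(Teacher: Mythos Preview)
Your proof is correct and is essentially the same as the paper's: both partition $[1,m-1]\setminus\{i\}$ into subsets $\{j_1,j_2\}$ with $j_1+j_2\equiv i\pmod m$, observe that each such subset must contain an index with $x_l\ge 2$, and add the resulting contributions to $\sum_l (x_l-1)$. You have simply made explicit the split into the two cases $j_1+j_2=i$ and $j_1+j_2=m+i$ (and the verification that $k_i=3$), which the paper handles in a single terser sentence.
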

\begin{proof}
The set $[1,m-1]\setminus\{i\}$ can be partitioned as a union of subsets of the form $\{j_1,j_2\}$ with $j_1+j_2\equiv i\pmod{m}$.
Since $3m+ i \in \A(S)$ we know that for each subset $\{j_1,j_2\}$ in the partition $x_{j_1}+x_{j_2}+1> x_{i}=3$. Hence at least one of $x_{j_1},x_{j_2}$ is at least $2$. The number of subsets $\{j_1,j_2\}$ in the partition is at least $\lceil\frac{m-2}{2}\rceil$. Therefore,
\[
g-(m-1)=\sum_{j=1}^{m-1}(x_j-1)\geq 2+\left\lceil\frac{m-2}{2}\right\rceil \geq 2+\frac{m-2}{2}.\qedhere
\]
\end{proof}

We are now ready to prove Theorem \ref{thm:bijection}.  We prove it in two parts.
\begin{proposition}\label{kunz tuple necessary cond}
Fix integers $k_2\geq k_1\geq -1$. Suppose $g\geq 3k_1+2$, and $S\in\SS_g$ satisfies $m(S)=g-k_1$ and $e(S)=g-k_2$. Let $m=m(S)$.
Suppose $\KV_m(S) = (x_1,\dots,x_{m-1})$ and let $\overline{x}=(x_1,\dots,x_{2k_1+1})$. Then the following hold:
\begin{enumerate}
    \item $\{x_1,\dots,x_{m-1}\}\subseteq \{1,2,3\}$.
    \item If $i\geq 2k_1$, then $x_i\in\{1,2\}$.
    \item Whenever $i_1, i_2, i_3\in [1,2k_1+1]$ satisfy $i_1+i_2=i_3$, we have $(x_{i_1},x_{i_2},x_{i_3})\neq(1,1,3)$.
    \item $\#\{i\in [2k_1+2,m-1]\mid x_i=2\}=k_1+1-a(\overline{x})-2b(\overline{x}).$
    \item $a(\overline{x})+b(\overline{x})-c(\overline{x})=2k_1+1-k_2$.
    \item $e(S) = g-2k_1-1+a(\overline{x})+b(\overline{x})-c(\overline{x})$.
    \item $k_2\leq 2k_1+1$. 
\end{enumerate}
\end{proposition}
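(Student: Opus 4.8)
The plan is to work directly with the Kunz coordinate vector $(x_1,\dots,x_{m-1}) = \KV_m(S)$, where $m = m(S) = g-k_1$, and to translate each of the seven claims into a statement about this vector using the bijection of Theorem \ref{Kunz_bijection} and the structural lemmas just proved. The key numerical observation underlying everything is that $\sum_{i=1}^{m-1}(x_i - 1) = g - (m-1) = k_1 + 1$, so the $x_i$ are very tightly constrained: only $k_1+1$ "units of excess" are available to distribute.

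First I would handle (1) and (2). For (1), since $2g - (3m+2) = 2(g-k_1) + 2k_1 - 3(g-k_1) - 2 = -g + 5k_1 + \ldots$ — more simply, $2g < 3m+2$ is equivalent to $2(m+k_1) < 3m+2$, i.e. $2k_1 < m+2$, i.e. $m \ge 2k_1+1$; since we are given $g \ge 3k_1+2$ and hence $m = g - k_1 \ge 2k_1+2$, Lemma \ref{ki in 1,2,3} applies and gives $x_i \in \{1,2,3\}$. For (2), if $x_i = 3$ for some $i \ge 2k_1$, then Lemma \ref{ki=3} forces $g \ge m + 1 + \lceil (i-1)/2\rceil \ge (g-k_1) + 1 + \lceil (2k_1-1)/2 \rceil = g - k_1 + 1 + k_1 = g+1$, a contradiction; so $x_i \in \{1,2\}$ for $i \ge 2k_1$. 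Claim (3) is similar: if $i_1 + i_2 = i_3 \le 2k_1+1$ with $(x_{i_1},x_{i_2},x_{i_3}) = (1,1,3)$, then the subadditivity relation $x_{i_1} + x_{i_2} \ge x_{i_3}$ from Theorem \ref{Kunz_bijection} is violated ($2 \not\ge 3$), so this configuration simply cannot occur — actually this is immediate from the defining inequalities and needs no lemma.

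Next come the substantive claims (4), (5), (6). For (6), I would use the Proposition preceding the statement (the formula for $\A(S)$ in terms of $\KV_m(S)$) to count $e(S) = |\A(S)|$. The element $m$ always lies in $\A(S)$; among the $a_i = m k_i + i$ with $k_i \ge 2$, one must determine which are minimal generators. Using Lemmas \ref{ki=2 is gen}, \ref{ki=3 is gen} one shows that for $i \ge 2k_1+2$ no new generators beyond the "expected" ones arise, and that every $a_i$ with $k_i = 2$ and $i \ge 2k_1+2$ is automatically a generator, while the $3m+i$ generators are excluded entirely (Lemma \ref{ki=3 is gen} would force $g \ge 3m/2 = 3(g-k_1)/2$, i.e. $g \le 3k_1$, contradicting $g \ge 3k_1+2$). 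Carefully tallying, the generators are: $m$; the $mk_i+i$ with $i \le 2k_1+1$ that survive the generator test (these are counted by $a(\overline{x}) + b(\overline{x}) - c(\overline{x})$, since $c(\overline{x})$ records exactly the $x_i = 2$ with $i \le 2k_1+1$ that are \emph{not} generators because $i = j_1 + j_2$ with $x_{j_1} = x_{j_2} = 1$); and all $mk_i + i$ with $i \ge 2k_1+2$ and $k_i = 2$. Combining with (4) — which says the number of the last type is $k_1 + 1 - a(\overline{x}) - 2b(\overline{x})$ — gives $e(S) = 1 + (a(\overline{x})+b(\overline{x})-c(\overline{x})) + \#\{i \ge 2k_1+2 : x_i = 2\}$; but I would instead prove (6) by a cleaner route and derive (4) from it. Claim (4) itself follows by the excess count: $\sum_{i=1}^{m-1}(x_i-1) = k_1+1$ splits as $(\text{excess on } \overline{x}) + (\text{excess on the tail})$, where the excess on $\overline{x}$ is $a(\overline{x}) + 2b(\overline{x})$ (each $2$ contributes $1$, each $3$ contributes $2$) and the tail has only $1$s and $2$s by (2), so the number of $2$s in the tail is exactly $k_1+1 - a(\overline{x}) - 2b(\overline{x})$. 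This also yields the side remark $a(\overline{x}) + 2b(\overline{x}) \le k_1+1$ since the tail count is nonnegative.

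Finally, (5) and (7): given (6) and the hypothesis $e(S) = g - k_2$, we get $g - k_2 = g - 2k_1 - 1 + a(\overline{x}) + b(\overline{x}) - c(\overline{x})$, i.e. $a(\overline{x}) + b(\overline{x}) - c(\overline{x}) = 2k_1 + 1 - k_2$, which is exactly (5). For (7), since $c(\overline{x}) \ge 0$, we have $a(\overline{x}) + b(\overline{x}) \ge a(\overline{x}) + b(\overline{x}) - c(\overline{x}) = 2k_1+1-k_2$; but also $a(\overline{x}) + b(\overline{x}) \ge 0$ trivially does not suffice — instead, from $a(\overline{x})+b(\overline{x})-c(\overline{x}) = 2k_1+1-k_2 \ge -(\text{something})$... more directly: $2k_1+1-k_2 = a(\overline{x})+b(\overline{x})-c(\overline{x})$ and $c(\overline{x}) \le a(\overline{x})$ (a generator-less $2$ is in particular a $2$), so $2k_1+1-k_2 = a(\overline{x}) + b(\overline{x}) - c(\overline{x}) \ge b(\overline{x}) \ge 0$, giving $k_2 \le 2k_1+1$. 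I expect the main obstacle to be (6): carefully verifying, via the generator formula and Lemmas \ref{ki=2 is gen}–\ref{ki=3 is gen}, exactly which Apéry elements are minimal generators when $i$ ranges over the "long tail" $[2k_1+2, m-1]$ versus the "short head" $[1,2k_1+1]$, and checking that no cross-terms (a generator test for $a_i$ with small $i$ involving $j_1, j_2$ with one of them large) spoil the count. Everything else is bookkeeping with the excess-sum identity.
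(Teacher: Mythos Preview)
Your overall structure matches the paper's proof exactly: Lemmas \ref{ki in 1,2,3}--\ref{ki=3 is gen} for (1)--(3), the excess-sum identity $\sum (x_i-1)=k_1+1$ for (4), a direct generator count for (6), and algebra for (5) and (7). The gap is in your tally for (6), where two things are reversed. First, you omit the $x_i=1$ Ap\'ery elements entirely: every $m x_i+i$ with $x_i=1$ lies in $[m,2m)$ and is therefore automatically a minimal generator; these are the bulk of $e(S)$ and cannot be dropped. Second, you have the tail $x_i=2$ elements backwards. Lemma \ref{ki=2 is gen} says that if $x_i=2$ with $i\ge 2k_1+2$ and $2m+i\in\A(S)$, then $g\ge m+\lceil(i-1)/2\rceil\ge m+k_1+1=g+1$, a contradiction --- so the tail $x_i=2$ elements are \emph{never} generators, not ``automatically'' generators as you wrote. (Your own displayed formula $e(S)=1+(a+b-c)+\#\{i\ge 2k_1+2:x_i=2\}$ does not even contain $g$, which is a red flag.)

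The correct description of $\A(S)\setminus\{m\}$, which the paper establishes, is: all $i$ with $x_i=1$ (head and tail alike), together with those $i\le 2k_1+1$ with $x_i=2$ for which no decomposition $i=j_1+j_2$ with $x_{j_1}=x_{j_2}=1$ exists (there are $a(\overline{x})-c(\overline{x})$ of these), and nothing else --- the $x_i=3$ head elements are excluded by Lemma \ref{ki=3 is gen}, as you correctly noted. Summing
\[
1+\bigl((2k_1+1)-a-b\bigr)+\bigl((m-2k_1-2)-(k_1+1-a-2b)\bigr)+(a-c)
\]
and using $m=g-k_1$ gives (6). Your worry about ``cross-terms'' is harmless: for a head index $i$ with $x_i=2$, any decomposition $i=j_1+j_2$ with $x_{j_1}+x_{j_2}=2$ forces $x_{j_1}=x_{j_2}=1$ and $j_1,j_2<i\le 2k_1+1$, so both summands stay in the head.
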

\noindent We note that verifying property (6) proves Proposition \ref{embedding_overlinex}.
\begin{proof}
We know that $m=g-k_1\geq 2k_1+2$. Next,
\[
3m+2-2g=3(g-k_1)+2-2g=g-(3k_1+2)+4\geq 4.
\]
This means that $2g<3m+2$. Lemma \ref{ki in 1,2,3} implies $\{x_1,\dots,x_{m-1}\}\subseteq \{1,2,3\}$.

Suppose $i$ satisfies $2k_1\leq i\leq m-1$. Assume for the sake of contradiction that $x_i=3$. By Lemma \ref{ki=3} we have $g\geq m+1+\left\lceil\frac{i-1}{2}\right\rceil$. 
However, 
\[
g\geq m+1+\left\lceil\frac{i-1}{2}\right\rceil \geq m+1+\left\lceil\frac{2k_1-1}{2}\right\rceil=m+1+k_1=g+1.
\]
This is a contradiction and we conclude that $x_i\in \{1,2\}$.

Suppose $i_1, i_2, i_3\in [1,2k_1+1]$ satisfy $i_1+i_2=i_3$. Suppose $x_{i_1}=x_{i_2}=1$. Since $(x_1,\dots,x_{m-1})$ is the Kunz coordinate vector of a numerical semigroup, we know that $x_{i_3}\leq x_{i_1}+x_{i_2}=2$.

Next, note that
\[
k_1+1=g-(m-1)=\sum_{i=1}^{m-1}(x_i-1) =a(\overline{x})+2b(\overline{x})+\#\{i\in [2k_1+2,m-1]\mid x_i=2\}.
\]
Next, we claim that $\A(S)$ is given by $m$ together with the elements $mx_i + i$ satisfying either
\begin{enumerate}
    \item $x_i = 1$, or 
    \item $x_i = 2$ where $i \in [1,2k_1+1]$ and there does not exist a $j$ satisfying $1 \le j < i$ with $x_j = x_{i-j} = 1$.
\end{enumerate}
It is clear that all these elements are elements of $\A(S)$. Suppose $a=mx_i+i$ is some other element of $\Ap(S;m)$. Then one of the following must hold:
\begin{itemize}[leftmargin=*]
    \item Case 1: $x_i=2$ where $i \in [1,2k_1+1]$ and there does exist a $j$ satisfying $1 \le j < i$ with $x_j = x_{i-j} = 1$. Then $x_i=x_{j}+x_{i-j}$, so $a \not\in \A(S)$.
    \item Case 2: $x_i=2$ where $i \in [2k_1+2,m-1]$.
    Assume for the sake of contradiction that $a \in \A(S)$. Lemma \ref{ki=2 is gen} implies that $g\geq m+\left\lceil\frac{i-1}{2}\right\rceil$. This implies that
    \[
    g\geq m+\left\lceil\frac{2k_1+1}{2}\right\rceil=m+k_1+1=g+1.
    \]
    This is a contradiction. Therefore $a\notin\A(S)$.
    \item Case 3: $x_i=3$. Assume for the sake of contradiction that $a \in \A(S)$. By Lemma \ref{ki=3 is gen}, $g\geq \frac{3m}{2}$. This implies that
    \[
    k_1=g-m\geq \frac{m}{2}=\frac{g-k_1}{2}\geq \frac{2k_1+2}{2}=k_1+1,
    \]
    which is a contradiction. Therefore $a\notin\A(S)$.
\end{itemize}
This characterization of $\A(S)$ implies that 
\begin{eqnarray*}
    g-k_2& =& e(S)\\
    & = & 1+\big(2k_1+1-a(\overline{x})-b(\overline{x})\big)+\#\{i\in[2k_1+2,m-1]\mid x_i=1\}+\big(a(\overline{x})-c(\overline{x})\big)\\
    &=& 2k_1+2-b(\overline{x})-c(\overline{x})+\big(m-1-(2k_1+1)-\#\{i\in [2k_1+2,m-1]\mid x_i=2\}\big)\\
    &=& -b(\overline{x})-c(\overline{x})+(g-k_1)-\big(k_1+1-a(\overline{x})-2b(\overline{x})\big)\\
    &=& g-2k_1-1+a(\overline{x})+b(\overline{x})-c(\overline{x}).
\end{eqnarray*}
We conclude that $a(\overline{x})+b(\overline{x})-c(\overline{x})=2k_1+1-k_2$.

Finally since $c(\overline{x})\leq a(\overline{x})$, we see that $a(\overline{x})+b(\overline{x})-c(\overline{x})\geq 0$ and so $k_2\leq 2k_1+1$.
\end{proof}

We now prove the other direction in Theorem \ref{thm:bijection}.
\begin{proposition}
Fix integers $-1\leq k_1\leq k_2\leq 2k_1+1$. Also fix $m\geq 2k_1+2$ and a tuple of positive integers  $(x_1,x_2,\dots,x_{m-1})$. Let $\overline{x}=(x_1,\dots,x_{2k_1+1})$. Suppose we have the following:
\begin{enumerate}
    \item $\{x_1,\dots,x_{m-1}\}\subseteq \{1,2,3\}$.
    \item If $i\geq 2k_1+2$ then $x_i\in\{1,2\}$.
    \item Whenever $i_1, i_2, i_3\in [1,2k_1+1]$ satisfy $i_1+i_2=i_3$, we have $(x_{i_1},x_{i_2},x_{i_3})\neq(1,1,3)$.
    \item $\#\{i\in [2k_1+2,m-1]\mid x_i=2\}=k_1+1-a(\overline{x})-2b(\overline{x}).$
    \item $a(\overline{x})+b(\overline{x})-c(\overline{x})=2k_1+1-k_2$.
\end{enumerate}
Let
\[
S=m\N_0\cup \bigcup_{i=1}^{m-1}(i+mx_i+m\N_0).
\]
Then $S$ is a numerical semigroup satisfying $m(S)=m,\ g(S)=m+k_1$, and $e(S)=g(S)-k_2$.
\end{proposition}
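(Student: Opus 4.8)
The plan is to recognize the tuple $(x_1,\dots,x_{m-1})$ as the Kunz coordinate vector of a numerical semigroup via Theorem~\ref{Kunz_bijection}, then read off $g(S)=\sum_i x_i$ and $e(S)=|\A(S)|$ from the combinatorics of that vector, largely reusing the case analysis from the proof of Proposition~\ref{kunz tuple necessary cond}. First I would verify that $(x_1,\dots,x_{m-1})\in\Z_{\ge 1}^{m-1}$ satisfies the two families of Kunz inequalities in Theorem~\ref{Kunz_bijection}. The relations with $i+j>m$ are automatic, since $x_i+x_j+1\ge 3\ge x_{i+j-m}$ by condition~(1). For a relation $x_i+x_j\ge x_{i+j}$ with $i+j\le m-1$, we have $x_i+x_j\ge 2$ and $x_{i+j}\le 3$, so it can fail only when $x_i=x_j=1$ and $x_{i+j}=3$; condition~(3) excludes this when $i+j\le 2k_1+1$ and condition~(2) excludes it when $i+j\ge 2k_1+2$. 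By Theorem~\ref{Kunz_bijection}, together with the standard fact that a numerical semigroup is recovered from its Ap\'ery set as the sumset $\Ap(\,\cdot\,;m)+m\N_0$, the explicitly defined $S$ is a numerical semigroup with $\KV_m(S)=(x_1,\dots,x_{m-1})$; and since each $x_i\ge 1$ forces every nonzero element congruent to $i$ modulo $m$ to exceed $m$, we get $m(S)=m$.

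Next I would compute the genus. By Theorem~\ref{Kunz_bijection}, $g(S)=\sum_{i=1}^{m-1}x_i=(m-1)+\sum_{i=1}^{m-1}(x_i-1)$. Splitting $[1,m-1]$ into $[1,2k_1+1]$ and $[2k_1+2,m-1]$ and applying conditions~(2) and~(4), the sum $\sum_{i=1}^{m-1}(x_i-1)$ equals $\big(a(\overline{x})+2b(\overline{x})\big)+\big(k_1+1-a(\overline{x})-2b(\overline{x})\big)=k_1+1$, so $g(S)=m+k_1$.

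Finally I would compute $e(S)$ using the description of $\A(S)$ in terms of $\KV_m(S)$. Every $mx_i+i$ with $x_i=1$ is a minimal generator. No element with $x_i=3$ is a minimal generator: by Lemma~\ref{ki=3 is gen} that would force $g\ge\frac{3m}{2}$, i.e.\ $m\le 2k_1$, contradicting $m\ge 2k_1+2$. No element $2m+i$ with $x_i=2$ and $i\ge 2k_1+2$ is a minimal generator: by Lemma~\ref{ki=2 is gen} that would force $g\ge m+\lceil\frac{i-1}{2}\rceil\ge m+k_1+1>g$. For $i\le 2k_1+1$ with $x_i=2$, the element $2m+i$ is a minimal generator exactly when $i$ cannot be written as $j_1+j_2$ with $x_{j_1}=x_{j_2}=1$ (the alternative $j_1+j_2=m+i$ cannot occur, as it needs $x_{j_1}+x_{j_2}=1$), and the number of such $i$ is $a(\overline{x})-c(\overline{x})$. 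Combining this with the count $\#\{i:x_i=1\}=m-k_1-2+b(\overline{x})$ obtained from the genus computation gives
\[
e(S)=1+\big(m-k_1-2+b(\overline{x})\big)+\big(a(\overline{x})-c(\overline{x})\big)=m-k_1-1+a(\overline{x})+b(\overline{x})-c(\overline{x}),
\]
and condition~(5) then yields $e(S)=m+k_1-k_2=g(S)-k_2$, matching Proposition~\ref{embedding_overlinex}.

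The hard part will be the last step: correctly identifying which elements of $\Ap(S;m)$ survive as minimal generators and carrying out the bookkeeping that turns this into the clean formula for $e(S)$. Fortunately this is essentially the same case analysis as in the proof of Proposition~\ref{kunz tuple necessary cond} (the ``Case 1/2/3'' discussion there already does the main work), so the genuinely new content is the short verification of the Kunz inequalities in the first step.
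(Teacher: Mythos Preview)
Your proposal is correct and follows essentially the same approach as the paper: verify the Kunz inequalities from conditions (1)--(3), compute $g(S)$ from condition (4), and deduce $e(S)$ using condition (5). The one organizational difference is that for the embedding-dimension step the paper does not redo the Case 1/2/3 analysis at all; once $m(S)=m$ and $g(S)=m+k_1\ge 3k_1+2$ are established, it simply applies Proposition~\ref{kunz tuple necessary cond}(5) as a black box to the semigroup $S$ to get $a(\overline{x})+b(\overline{x})-c(\overline{x})=2k_1+1-k$ for whatever $k$ satisfies $e(S)=g(S)-k$, and then compares with condition~(5) to conclude $k=k_2$. Your direct recount is fine and arguably more self-contained, but the paper's shortcut is cleaner.
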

\begin{proof}
Theorem \ref{Kunz_bijection} implies that if $(x_1,\ldots, x_{m-1})$ satisfies the first three conditions, then it is the Kunz coordinate vector of a numerical semigroups of multiplicity $m$. Notice that
\begin{equation*}
    \begin{split}
    g(S)-(m-1)&=\sum_{i=1}^{m-1}(x_i-1) =\#\{i\in [1,m-1]\mid x_i=2\}+2\#\{i\in [1,m-1]\mid x_i=3\}\\
    &=a(\overline{x})+\#\{i\in [2k_1+2,m-1]\mid x_i=2\}+2b(\overline{x})\\
    &=a(\overline{x})+2b(\overline{x})+\big(k_1+1-a(\overline{x})-2b(\overline{x})\big)\\
    &=k_1+1.
    \end{split}
\end{equation*}
This means that $g(S)=m+k_1$.
Suppose $e(S)=g(S)-k$.  It is clear that $k_1\leq k$, and also that $g(S)\geq 3k_1+2$. Therefore, by Proposition \ref{kunz tuple necessary cond}(5) we see that $a(\overline{x})+b(\overline{x})-c(\overline{x})=2k_1+1-k$. This implies $k=k_1$.
\end{proof}

We end this paper by including some data related to the sets $\Y(k)$ and the polynomials $H_l(x)$ and $f_k(x)$ of this section.  The initial $\Y(k)$ are as follows:
\[
\Y(-1)=\{\emptyset\},\;\;\;
\Y(0)=\{(1),(2)\},
\]
\begin{equation*}
    \begin{split}
    \Y(1)=\{&(1,1,1),(1,1,2),(1,2,1),(2,1,1),(2,2,1),(2,1,2),(1,2,2),(3,1,1) \},
    \end{split}
\end{equation*}
\begin{equation*}
    \begin{split}
    \Y(2)=\{&(1,1,1,1,1),(2,1,1,1,1),(1,2,1,1,1),(1,1,2,1,1),(1,1,1,2,1),(1,1,1,1,2),\\
    &(2,2,1,1,1),(2,1,2,1,1),(2,1,1,2,1),(2,1,1,1,2),(1,2,2,1,1),(1,2,1,2,1),\\
    &(1,2,1,1,2),(1,1,2,2,1),(1,1,2,1,2),(1,1,1,2,2),(2,2,2,1,1),(2,2,1,2,1),\\
    &(2,2,1,1,2),(2,1,2,2,1),(2,1,2,1,2),(2,1,1,2,2),(1,2,2,2,1),(1,2,2,1,2),\\
    &(1,2,1,2,2),(1,1,2,2,2),(3,1,1,1,1),(3,2,1,1,1),(3,1,2,1,1),(3,1,1,2,1),\\
    &(3,1,1,1,2),(2,3,1,1,1),(2,1,3,1,1),(1,2,3,1,1)\}.
    \end{split}
\end{equation*}

The first few polynomials are as follows:
$$H_{-1}(t)=1,\;\;\;\;\; H_{0}(t)=1,$$
$$H_{1}(t)=t,\;\;\;\;\ \ \ \ \ \ \ H_{2}(t)=t+1,$$
$$H_{3}(t)=\frac{t^2}{2}-\frac{3t}{2}+2,\;\;\;\;
H_{4}(t)=\frac{t^2}{2}-\frac{t}{2}-2.$$

\section*{Acknowledgments}
The first author was supported by NSF Grants DMS 1802281 and DMS 2154223.  The authors thank Shalom Eliahou, Daniel Zhu, and Sean Li for helpful comments.  The authors thank the organizers of the INdaM Workshop: \emph{International Meeting on Numerical Semigroups 2022} where the idea for this paper was developed.

\end{document}